\newcommand{\ecuno}{\epsilon_1(V,\eta)}
\newcommand{\ecdue}{\epsilon_2(V,\eta)}
\newcommand{\stab}{{\rm{Stab}}\,\,}
\newcommand{\stabv}{{\rm{Stab}}\,\,V}
\newcommand{\stabvz}{{\rm{Stab}^0}V}
\newcommand{\cgf}{c_1}
\newcommand{\cgw}{c_2}
\newcommand{\pippo}{\psi}
\newcommand{\fiffo}{\phi}
\newcommand{\expm}{{d+1}}
\newcommand{\expalt}{{1+\frac{1}{2n}}}
\newcommand{\expaltu}{{1+\frac{1}{2n}}}
\newcommand{\valeta}{{2d}}
\newcommand{\coeta}{{2(g-d-1)}}
\newcommand{\coefa}{{{d}+2d\eta}}
\newcommand{\emor}{\rm End }
\newcommand{\indice}{r}
\newcommand{\g}{{\theta}}
\newcommand{\cod}{\rm cod }
\newcommand{\qe}{\mathbb{Q}}
\newcommand{\ze}{\mathbb{Z}}
\newcommand{\upxi}{\zeta}
\newcommand{\varieta}{V}
\newcommand{\punto}{y}
\newcommand{\euno}{\varepsilon_3}
\newcommand{\ecinque}{\varepsilon_1}
\newcommand{\kzero}{{K}}
\newcommand{\kdue}{K+||p||}
\newtheorem{thm}{Theorem}[section]
\newtheorem{con}[thm]{Conjecture}
\newtheorem{propo}[thm]{Proposition}
\newtheorem{lem}[thm]{Lemma}
\newtheorem{D}[thm]{Definition}
\newtheorem{remark}[thm]{Remark}
\newtheorem*{propa*}{Proposition B}
\newtheorem*{propb*}{Proposition A}
\title[{ Non-dense subsets of   varieties } ]
{Non-dense subsets of   varieties \\ in a power of an elliptic curve}
\author[{ Evelina Viada}]{  
 }
\begin{document}

\maketitle
\centerline{Evelina Viada\footnote{Evelina Viada,
     Universit\'e de Fribourg Suisse, P\'erolles, D\'epartement de Math\'ematiques, Chemin du Mus\'ee 23, CH-1700 Fribourg,
Switzerland, viada@math.ethz.ch,
    evelina.viada@unifr.ch.}
    \footnote{Supported by the SNF (Swiss National Science Foundation).}
\footnote{Mathematics Subject classification (2000): 14H52, 11G50, 14K12  and 11D45.\\
Key words: Elliptic curves, Heights, Subvarieties,  Diophantine approximation.}}

Let $E$ be an elliptic curve without  C.M. defined over $\overline\qe$.
We  show that on a transverse $d$-dimensional variety $V\subset E^g$, the set of algebraic points of bounded height  which are close to the union of all algebraic subgroups of $E^g$ of codimension $d+1$ translated by a point in a subgroup $\Gamma$ of $E^g$ of finite rank, is non-Zariski dense in $V$. The notion of close is defined using a height function. If $\Gamma=0$, it is sufficient to assume that $V$ is weak-transverse.
This result is optimal with respect to   the codimension of the algebraic subgroups.

The method is based on an essentially optimal effective version of the Bogomolov Conjecture. Such an effective result is proven for subvarieties of $E^g$. If we assume that the sets have bounded height, then we can prove that they are not Zariski dense. 
A conjecture, known in some special cases, claims that the sets in question  have bounded height. We prove here a new case. 
In conclusion, our results prove a generalized case  of a conjecture by Zilber and by Pink in $E^g$.

\section{introduction}

In this article all algebraic varieties are defined over $\overline{\qe}$ and we consider only algebraic points.
Denote by  $A$  an abelian variety     of
dimension $g$. Consider a proper irreducible algebraic  subvariety   $V$
of  $A$ of dimension $d$. 
 We say that:
\begin{itemize}
 \item $V$ is transverse, if $V$ is not contained in any
translate of a proper algebraic subgroup of $A$.

\item  $V$ is
weak-transverse, if $V$ is not contained in  any proper algebraic
subgroup of $A$.
\end{itemize}

Given an integer $ \indice$ with $1 \le \indice \le g$ and  a subset $F$ of $A$, we define the set
$$S_{\indice}(V,F)= V \cap  \bigcup_{\mathrm{cod}B \ge \indice} (B+F),$$
where $B$ varies over all abelian subvarieties of $A$ of
codimension at least $\indice$ and $$B+F=\{b+f \,\,\,: \,\,\,b\in B, \,\,\,f\in F\}.$$ 
Note that $$S_{r+1}(V,F)\subset S_{r}(V,F).$$
We  denote the set $S_{\indice}(V, A_{\rm Tor})$
simply  by $S_{\indice}(V)$, where $A_{\rm Tor}$ is the torsion of $A$.
For convenience, for $r>g$ we  define $S_r(V,F)=\emptyset$ and for $V^e$ a subset of $V$  we define
$$S_r(V^e,F)=V^e\cap S_r(V,F).$$
We ask   for which sets  $F$ and integers
$\indice$ the set 
 $S_{\indice}(V,F)$ has bounded height or  is non-Zariski    dense in $V$.
 
Depending on the choice of $F$, the set $S_g(V,F)$ appears in the literature in the
context of the Mordell-Lang, of the Manin-Mumford and of the Bogomolov Conjectures.
More recently  Bombieri, Masser and Zannier \cite{BMZ}   proved that
for a transverse
 curve in a torus, the set $S_2(C)$ is finite. They
investigated  for the first time   intersections with the union of all
algebraic subgroups of a given codimension. This opens a vast
number of conjectures for subvarieties of semi-abelian varieties.

In this paper  we consider a variety in a power of an elliptic curve. In the first part of this work we study the non-density of $S_{d+1}(V,\cdot)$, the last part  is dedicated to  its  height.
Let $E$ be  an elliptic curve without C.M. Consider on $E^g$ the line bundle $\mathcal{L}$ given as tensor product of the pull backs via the natural projections of a symmetric ample line bundle on  $E$.
We fix on  $E^g $ a semi-norm  $||\cdot||$ induced by the N\'eron-Tate  height on $E$.
For $\varepsilon \ge 0$, we denote $$\mathcal{O}_{\varepsilon}=\{ \xi \in E^g  : ||\xi|| \le \varepsilon\}.$$ 
We denote by $\Gamma$  a subgroup of finite rank in  $E^g $.  We define
$\Gamma_{\varepsilon}=\Gamma + \mathcal{O}_\varepsilon.$
 
 Let $V$ an irreducible algebraic subvariety of $E^g$ of dimension $d$. For a non negative real $\kzero$, we define $$V_\kzero= V \cap \mathcal{O}_\kzero.$$

Our main result is:

\begin{thm}
\label{main}

 For every $\kzero\ge0$ there  exists  an effective $\varepsilon>0$ such that: 

\begin{enumerate}

\item  If  $V$ is weak-transverse,  $S_{d+1}(V_\kzero,\mathcal{O}_\varepsilon)$  is non-Zariski    dense in $V$.

\item  If  $V$ is transverse, $S_{d+1}(V_\kzero,\Gamma_\varepsilon)$ is non-Zariski    dense in $V$.
\end{enumerate}

\end{thm}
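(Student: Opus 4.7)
The plan is to reduce the non-density statement to the essentially optimal effective Bogomolov estimate for subvarieties of $E^g$ announced in the introduction. The key geometric observation is that a subgroup $B\subset E^g$ of codimension $\geq d+1$ together with the $d$-dimensional $V$ leaves at least one ``free direction'' in the quotient: the image $\phi_B(V)\subset E^g/B$ stays proper, since $\dim(E^g/B)\geq d+1>\dim\phi_B(V)$, and weak-transversality (resp.\ transversality) of $V$ transfers to $\phi_B(V)$ (resp.\ to $\phi_B(V-\gamma)$). Thus any $y\in S_{d+1}(V_\kzero,\Gamma_\varepsilon)$, written $y=b+\gamma+\eta$ with $b\in B$, $\gamma\in\Gamma$, $\|\eta\|\leq\varepsilon$, has $\phi_B(y)$ lying within distance $\|\phi_B\|\cdot\varepsilon$ of $\phi_B(\gamma)$ in $E^g/B$ while keeping its height bounded in terms of $\kzero$.

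I would begin with the case $\Gamma=0$ and $V$ weak-transverse. Here $\phi_B(y)\in\phi_B(V)$ has height at most $\|\phi_B\|\cdot\varepsilon$, and the effective Bogomolov input applied to the weak-transverse $\phi_B(V)$ (identified with a subvariety of a power of $E$ via an isogeny $E^g/B\sim E^{g-\mathrm{cod}\,B}$, valid because $E$ has no C.M.) produces a proper Zariski closed $Z_B\subsetneq\phi_B(V)$ containing all such $\phi_B(y)$, provided $\varepsilon$ lies below an effective threshold depending on $\phi_B(V)$; pulling back, $\phi_B^{-1}(Z_B)\cap V\subsetneq V$ absorbs the contribution of this $B$. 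The transverse case with general $\Gamma$ is handled analogously after subtracting a representative of $\gamma\bmod(B\cap\Gamma)$, the bounded height of $\gamma$ being absorbed into the threshold.

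The main obstacle is that $B$ and the cosets of $\gamma$ modulo $B\cap\Gamma$ form infinite families, so no single application of effective Bogomolov directly suffices. I would attack this via a geometry-of-numbers reduction: since $E$ has no C.M., its endomorphism ring is $\ze$ and abelian subvarieties of $E^g$ correspond, up to isogeny, to saturated sublattices of $\ze^g$, so codimension-$(d+1)$ subgroups are indexed by discrete combinatorial data. The height bound $\|y\|\leq\kzero$ together with $\varepsilon$-closeness forces only finitely many equivalence classes of such $B$ to be relevant once $\varepsilon$ is fixed, by a Minkowski-type argument; the finite rank of $\Gamma$ likewise collapses the relevant coset representatives to a finite list per class.

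Finally I would combine the exceptional loci $\phi_{B_i}^{-1}(Z_{B_i})\cap V$ over the finite list of representatives $B_i$ into a single proper Zariski closed subset of $V$ containing $S_{d+1}(V_\kzero,\Gamma_\varepsilon)$. The principal technical difficulty I expect is producing a single effective $\varepsilon>0$ depending only on $V$, $\Gamma$ and $\kzero$ that works uniformly over all $B_i$, namely, quantifying how the Bogomolov threshold degrades with the complexity of $\phi_{B_i}$ and with $\kzero$. This is precisely where the ``essentially optimal'' nature of the effective Bogomolov input is decisive: a crude Bogomolov would almost certainly fail to give a uniform $\varepsilon$ and the argument would not close.
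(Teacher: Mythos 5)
There is a genuine gap, in fact two, and they sit exactly at the two places you flag as ``difficulties'' without resolving them. First, the finiteness reduction does not work the way you describe. It is not true that the height bound $\|y\|\le \kzero$ together with $\varepsilon$-closeness leaves only finitely many relevant subgroups $B$ for a \emph{fixed} radius $\varepsilon$: infinitely many Gauss-reduced $\phi$ of unbounded height can contribute. What a Dirichlet/pigeonhole argument actually gives (Proposition \ref{centro} of the paper) is that every point close to some $B_{\tilde\fiffo}$ is close to one of finitely many $B_{\tilde\pippo}$ with $H(\tilde\pippo)\le M$, but only at the price that ``close'' for $B_{\tilde\pippo}$ means within $\mathcal{O}_{\varepsilon'/H(\tilde\pippo)^{1+1/2n}}$ --- a radius that \emph{shrinks} with the height of the approximating morphism. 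Your version, which keeps a fixed $\varepsilon$ per class, is precisely the statement that fails; and the shrinking radius is not a technicality but the input the next step consumes. Likewise your treatment of $\Gamma$ (``subtracting a representative of $\gamma$ mod $B\cap\Gamma$'') ignores that the relevant $\gamma\in\Gamma$ have unbounded height even for $y\in V_\kzero$; the paper must pass to the weak-transverse variety $V\times\gamma$ with a quasi-orthogonal maximal free set and restrict to Special morphisms (Proposition \ref{speciali}) to control this.

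Second, and more seriously, the non-density step cannot be closed by applying Bogomolov to $\phi_B(V)$ alone, however optimal the exponent. One has $\deg\phi_B(V)\ll H(\phi_B)^{2d}\deg V$, so the Bogomolov threshold for $\phi_B(V)$ decays like $H(\phi_B)^{-d-2d\eta}$, while the image of the $\varepsilon$-neighbourhood has height up to $gH(\phi_B)\varepsilon$, which \emph{grows} with $H(\phi_B)$. No single $\varepsilon>0$ makes $H(\phi_B)\varepsilon$ smaller than $H(\phi_B)^{-d}$ uniformly, so your exceptional locus $Z_B$ is simply not proper for large $H(\phi_B)$. The paper's resolution of this --- the actual core of the argument --- is absent from your proposal: for $a=H(\phi)$ large one replaces $\phi$ by the isogeny $\Phi$ of (\ref{hf}), writes $\Phi(V)=[a]W$ for the helping variety $W=LF^{-1}(V)$, uses Hindry's formula for $\deg[a]W$ together with the reduction to finite stabilizer (Lemma \ref{stabi}) to get $\deg W\ll a^{2(g-d-1)}|\stabv|\deg V$, and concludes $\mu(\Phi(V+\punto))\gg a^{1/(g-d)-\cdots}\to\infty$; for $a$ large this exceeds the \emph{fixed} bound $\kzero$ itself, so the whole of $V_\kzero$ lands in the small-points locus of $\Phi(V+\punto)$. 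Without this mechanism (or a substitute for it) the argument does not close, and the ``essentially optimal'' exponent in Theorem \ref{sin2} cannot compensate.
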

Because of the different hypotheses on the variety and
the different sets in the thesis, there are no evident implications between the statements i. and ii.

Let us say at once that the theorem is expected to hold for $V$ instead of $V_K$. This is immediately implied by:
 \begin{con}
\label{hb}
There exist  $\varepsilon>0$
  and a non-empty Zariski open subset $V^u$ of $V$ such that: 
\begin{enumerate}
\item  If $V$ is weak-transverse, 
  $S_{d+1}(V^u,\mathcal{O}_\varepsilon)$  has bounded height.

\item  If $V$ is transverse,   $S_{d+1}(V^u,\Gamma_\varepsilon)$ has bounded height.
\end{enumerate}
\end{con}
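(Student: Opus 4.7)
The plan is to attack Conjecture~\ref{hb} via a projection-and-uniformity scheme, using the paper's essentially optimal effective Bogomolov theorem as the key quantitative input. Write
$$S_{d+1}(V,\Gamma_\varepsilon)=\bigcup_{\mathrm{cod}\,B=d+1} V\cap(B+\Gamma_\varepsilon),$$
which is harmless since any subgroup of codimension $>d+1$ sits inside one of codimension exactly $d+1$. Each such $B$ is, up to isogeny, the kernel of a surjective homomorphism $\phi:E^g\to E^{d+1}$, parametrized (since $E$ has no complex multiplication) by an integer $(d+1)\times g$ matrix $M_\phi$ of rank $d+1$. For any witness $\xi=b+\gamma+o$ with $b\in B$, $\gamma\in\Gamma$, $o\in\mathcal{O}_\varepsilon$, applying $\phi$ yields that $\phi(\xi)$ lies within distance $c_\phi\,\varepsilon$ of the finite-rank group $\phi(\Gamma)\subset E^{d+1}$, where $c_\phi$ depends on the operator norm of $\phi$.

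The central step is a height comparison. The image $W_\phi=\phi(V)$ is a proper subvariety of $E^{d+1}$ of dimension at most $d$. For generic $\phi$ I would show, using the transversality of $V$, that $W_\phi$ is a hypersurface, $\phi|_V:V\to W_\phi$ is birational, and $W_\phi$ inherits transversality (respectively weak-transversality) in $E^{d+1}$. I would then apply the codimension-one bounded-height statement, known through Bombieri-Masser-Zannier and R\'emond-type work, to $W_\phi\cap(\phi(\Gamma)+\mathcal{O}_{c_\phi\varepsilon})$; this forces $h(\phi(\xi))$ to be bounded on a Zariski open $W_\phi^u$. Pulling back along the birational morphism $\phi|_V$ and combining with the effective Bogomolov lower bound on $V$ (Proposition A/B above) to control the kernel component of $\xi$, one obtains $h(\xi)\le C(\phi)$ on a Zariski open preimage $V_\phi^u\subset V$.

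The main obstacle, as always in this circle of problems, is the uniformity over $\phi$: the constant $C(\phi)$ grows with the size of $M_\phi$, and there are infinitely many eligible $\phi$. The plan is to invoke a Minkowski-type geometry-of-numbers argument, simultaneously on the lattice of cocharacters and on $\Gamma\otimes\qe$: given $\xi\in S_{d+1}(V,\Gamma_\varepsilon)$ with witness $(\phi,\gamma)$, one produces an equivalent witness $(\phi',\gamma')$ whose matrix $M_{\phi'}$ has size bounded by a constant depending only on $V$, $\Gamma$ and $g$, at the price of enlarging $\varepsilon$ by a controlled factor. After this reduction $\phi$ lives in a finite family, and $V^u:=\bigcap_\phi V_\phi^u$ is a nonempty Zariski open on which the height is uniformly bounded. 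The weak-transverse case with $\Gamma=0$ runs by the same scheme with $\phi(\Gamma)=0$, the codim-one bounded-height input being replaced by the effective Bogomolov estimate applied directly to $W_\phi\cap\mathcal{O}_{c_\phi\varepsilon}$; here the hard point is to guarantee that $W_\phi$ is itself not contained in a proper subgroup of $E^{d+1}$ for the relevant $\phi$, which is precisely what one excludes by shrinking $V^u$.
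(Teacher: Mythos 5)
The statement you are proving is labelled a \emph{Conjecture} in the paper, and the paper does not prove it: it only establishes the special cases in Theorems \ref{g1} and \ref{wtalt}, for varieties satisfying the strong condition (\ref{stella}) ($\dim(V+B)=\min(d+\dim B,g)$ for all abelian subvarieties $B$), and it does so by importing R\'emond's Vojta-inequality machinery (\cite{RHG2}, \cite{Gprep}), not by projection to $E^{d+1}$. Your proposal claims the full conjecture, and each of its three main steps has a genuine gap. First, the ``codimension-one bounded-height input'' you invoke for $W_\phi\cap(\phi(\Gamma)+\mathcal{O}_{c_\phi\varepsilon})$ is not a known result: for a hypersurface $W_\phi\subset E^{d+1}$ this is exactly the $r=1$ case of the same open conjecture (the paper states explicitly that the only known method for $\Gamma\neq 0$ is a Vojta inequality, which for varieties requires hypotheses stronger than transversality). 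You are therefore reducing the conjecture to an instance of itself. Second, birationality of $\phi|_V$ onto $W_\phi$ for the relevant $\phi$ is precisely what fails in general: the fibers of $\phi|_V$ are $V\cap(B_\phi+\text{pt})$, and keeping them finite for \emph{all} $\phi$ of corank $d+1$ is equivalent to condition (\ref{stella}); restricting to ``generic'' $\phi$ does not help because the union defining $S_{d+1}$ runs over all of them. Moreover, even where $\phi|_V$ is generically finite, bounding $h(\xi)$ in terms of $h(\phi(\xi))$ on the fibers is a quantitative height comparison that the Bogomolov lower bound (Theorem \ref{sin2}) does not supply; that theorem controls Zariski density of points of small norm, not heights along fibers of an isogeny-like projection.

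Third, and most seriously, the uniformity step cannot work as described. The paper's reduction to finitely many morphisms (Proposition \ref{centro}) is a Diophantine approximation argument whose bound $M=\max(2,\lceil(K+\|p\|)/\varepsilon\rceil^2)^n$ depends on an \emph{a priori} height bound $K$ on the points considered; this is exactly why the paper's main theorem is stated for $V_K$ and why Conjecture \ref{hb} is needed to remove that restriction. A Minkowski-type argument replacing a witness $(\phi,\gamma)$ by one of bounded size, at the cost of enlarging $\varepsilon$ by a factor independent of $h(\xi)$, would already imply the boundedness of height you are trying to prove; no such unconditional reduction is available. If you want to salvage something, the correct route in this paper is the one of Section \ref{exg}: show that a Zariski-open $V^u$ satisfies a Vojta inequality (which R\'emond does only under (\ref{stella})) and apply Theorem \ref{Gal1}, together with the $G_p^{\varepsilon,r}$ decomposition of Lemma \ref{spezzo} for the weak-transverse case.
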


The  method known to show that the height is bounded relies on a Vojta inequality, unless $\Gamma$ is trivial.  This method gives
optimal results for curves, while for varieties a hypothesis stronger than transversality are needed. Let $V\subset E^g$ be a variety of dimension $d$ such that \begin{equation}
\label{stella}
\dim (V+B)=\min(d+\dim B, \,\,\, g )
\end{equation} for all abelian subvarieties $B$ of $E^g$.  
In this paper we   extend the    proof of R\'emond  of  Conjecture \ref{hb} ii. for $V$ satisfying condition  (\ref{stella}). In Theorem  \ref{wtalt}, we prove Conjecture \ref{hb} i. for $V\times p$ where  $p\in E^s$ is a point  not lying in any proper algebraic
subgroup of $E^s$.   We can then conclude:
\begin{thm}
\label{conseguenza}

For $V$ satisfying condition (\ref{stella}) and  $p\in E^s$ a point  not lying in any proper algebraic
subgroup of $E^s$, there exists $\varepsilon>0$ such that:
\begin{enumerate}
\item The set $S_{d+1}(V\times p,\mathcal{O}_\varepsilon)$ is non-Zariski dense in $V\times p$.
\item The set $S_{d+1}(V,\Gamma_\varepsilon)$ is non-Zariski dense in $V$.
\end{enumerate}

\end{thm}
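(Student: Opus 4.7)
The plan is to combine the bounded-height statement (Conjecture \ref{hb}, which the paper establishes as Theorem \ref{wtalt} in case (i) and as an extension of R\'emond's argument in case (ii)) with Theorem \ref{main}. Theorem \ref{main} gives non-density only on the bounded-height slice $V_K$, for any prescribed $K$, so a height bound is precisely what is needed to promote that conclusion to a Zariski dense open subset of $V$.

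For part (i), I first verify that $V\times p$ is weak-transverse in $E^{g+s}$. Suppose $V\times p\subset H$ with $H$ a connected proper abelian subvariety. Since $\pi_2(V\times p)=\{p\}$ and $p$ lies in no proper subgroup of $E^s$, we must have $\pi_2(H)=E^s$. Taking differences of points of $V\times p\subset H$, the set $V-V$ lies in $B:=H\cap(E^g\times\{0\})$, which is a proper subgroup of $E^g$ since $\dim H=\dim B+s<g+s$. Fixing $v_0\in V$ gives $V\subset v_0+B$, contradicting the transversality of $V$ implied by (\ref{stella}). Theorem \ref{wtalt} then supplies $\varepsilon_1>0$, a non-empty Zariski open $U\subset V\times p$, and a constant $K$ such that $S_{d+1}(U,\mathcal{O}_{\varepsilon_1})\subset(V\times p)_K$. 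Theorem \ref{main}(i) applied to $V\times p$ with this $K$ delivers $\varepsilon_2>0$ making $S_{d+1}((V\times p)_K,\mathcal{O}_{\varepsilon_2})$ non-Zariski dense in $V\times p$. Set $\varepsilon=\min(\varepsilon_1,\varepsilon_2)$. Since $\mathcal{O}_\varepsilon\subset\mathcal{O}_{\varepsilon_1}$,
\[
S_{d+1}(U,\mathcal{O}_\varepsilon)\;\subset\;(V\times p)_K\cap S_{d+1}(V\times p,\mathcal{O}_\varepsilon)\;=\;S_{d+1}((V\times p)_K,\mathcal{O}_\varepsilon),
\]
so $S_{d+1}(U,\mathcal{O}_\varepsilon)$ is non-Zariski dense in $V\times p$. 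The residual piece $S_{d+1}((V\times p)\setminus U,\mathcal{O}_\varepsilon)$ lies in the proper closed subvariety $(V\times p)\setminus U$, and irreducibility of $V\times p$ yields the conclusion.

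Part (ii) is entirely parallel. Condition (\ref{stella}) makes $V$ transverse, so the R\'emond-type extension supplies $\varepsilon_1>0$, a non-empty Zariski open $V^u\subset V$, and a constant $K$ with $S_{d+1}(V^u,\Gamma_{\varepsilon_1})\subset V_K$. Theorem \ref{main}(ii) applied with this $K$ yields $\varepsilon_2>0$ such that $S_{d+1}(V_K,\Gamma_{\varepsilon_2})$ is non-Zariski dense in $V$. Taking $\varepsilon=\min(\varepsilon_1,\varepsilon_2)$ and decomposing $V$ into $V^u$ and its complementary proper closed subvariety finishes the argument as in case (i).

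The combination step is essentially formal once the right parameters are aligned; the real difficulty lies upstream, in proving Theorem \ref{wtalt} and in generalising R\'emond's bounded-height theorem from transverse curves to varieties satisfying (\ref{stella}). Both ingredients depend on Vojta-type height inequalities, and that is where the substantial technical work behind Theorem \ref{conseguenza} is concentrated.
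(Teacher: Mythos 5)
Your proof is correct and follows exactly the route the paper intends: Theorem \ref{conseguenza} is obtained by combining the bounded-height results (Theorem \ref{wtalt} for part i and Theorem \ref{g1} for part ii) with Theorem \ref{main} applied on the resulting bounded-height slice $V_K$, the complement of the open set $V^u$ being harmlessly contained in a proper closed subvariety. The paper leaves this combination implicit (``This is immediately implied by\dots''), and your write-up --- including the verification that $V\times p$ is weak-transverse, which only needs the trivial extra remark that a non-connected subgroup may be replaced by its connected component since $V\times p$ is irreducible --- supplies precisely the intended details.
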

In section 2,  we clarify that, up to an isogeny of $E^n$, a weak-transverse variety in $E^n$ has the shape $V\times p$ for $V$ transverse in some $E^g$ and $p$  a point in $E^{n-g}$ not lying in any proper algebraic subgroup of $E^{n-g}$.

 For the codimension of the subgroups equal to $g$, statements i. and ii.  are cases of   the  Bogomolov Conjecture   and   the Mordell-Lang plus Bogomolov Conjecture  respectively.
Let us 
emphasise that our theorem neither gives a new proof of the
Bogomolov Conjecture (as we make use of such a result), nor we get a new proof of the Mordell-Lang Conjecture (as we use a more general Vojta inequality).
On the contrary we give a new proof of the Mordell-Lang plus Bogomolov Theorem (Poonen \cite{poonen}), under the
assumption (\ref{stella}). In addition, theorem \ref{conseguenza}  part ii. proves a case of  a conjecture by Zilber and Pink extended by the   Bogomolov Conjecture.\\

 In \cite{io}, we proved our main result for a
 curve in  $E^g$. A naive extension of  the method in \cite{io},  would imply a weak form of Theorem \ref{main}, where the codimension of the algebraic subgroups shall be at least $2d$ instead of $d+1$.
Here, we improve the method used in \cite{io} obtaining the optimal $d+1$. 
In the first instance we show that Theorem \ref{main} i. and ii. are equivalent,
then we prove  Theorem \ref{main} ii. 
\begin{thm}
\label{equii}
Given $\kzero\ge0$ and a positive integer $r$, the following statements are equivalent:
\begin{enumerate}

\item  For $V$  weak-transverse,  there exists  $\varepsilon>0$ such that  $S_{r}(V_\kzero,\mathcal{O}_\varepsilon)$  is non-Zariski    dense in $V$.

\item  For $V$  transverse,  there exists  $\varepsilon>0$ such that  $S_{r}(V_\kzero,\Gamma_\varepsilon)$ is non-Zariski    dense in $V$.

\end{enumerate}

\end{thm}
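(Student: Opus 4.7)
The equivalence rests on the structural decomposition announced in section 2: up to an isogeny of $E^n$, every weak-transverse variety in $E^n$ has the form $V'\times p$ with $V'$ transverse in some $E^g$ and $p\in E^{n-g}$ outside every proper algebraic subgroup of $E^{n-g}$. The plan is to use this decomposition to set up a dictionary between the two sides that preserves codimension, bounded height and the $\mathcal{O}_\varepsilon$ / $\Gamma_\varepsilon$ neighbourhood structure, and then verify the two implications separately.

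For $(\mathrm{i})\Rightarrow(\mathrm{ii})$, take $V\subset E^g$ transverse and $\Gamma\subset E^g$ of finite rank with generators $\gamma_1,\dots,\gamma_s$ modulo torsion. I would set $\tilde V = V\times\{(\gamma_1,\dots,\gamma_s)\}\subset E^g\times(E^g)^s$; after an isogeny provided by section 2, $\tilde V$ is weak-transverse in $E^{g(s+1)}$. For $B=\ker M\subset E^g$ of codimension $r$ and an integer vector $\mathbf n=(n_1,\dots,n_s)$, the subgroup
$$
\tilde B_{\mathbf n}=\Bigl\{(z_0,z_1,\dots,z_s)\;:\;M\bigl(z_0-\textstyle\sum_i n_i z_i\bigr)=0\Bigr\}
$$
has codimension $r$ in $E^{g(s+1)}$. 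If $x\in V_\kzero$ with $x=b+\sum_i n_i\gamma_i+\tau+o$, $b\in B$, $\tau$ torsion and $||o||\le\varepsilon$, then $(x,\gamma_1,\dots,\gamma_s)$ lies within $\mathcal{O}_\varepsilon$ of a torsion translate of $\tilde B_{\mathbf n}$, hence within $\mathcal{O}_\varepsilon$ of an algebraic subgroup of codimension $r$, and $(x,\gamma)\in\tilde V_{\kzero+||\gamma||}$. Statement (i) applied to $\tilde V$ gives a proper Zariski closed subset of $\tilde V$ containing all such lifted points; projection to the first factor yields (ii).

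For $(\mathrm{ii})\Rightarrow(\mathrm{i})$, start with $V\subset E^n$ weak-transverse and decompose $V=V'\times p$ via section 2. Let $\Gamma\subset E^g$ be the finite-rank subgroup generated by the coordinatewise embeddings of the components of $p$ into $E^g$. Writing $B=\ker[M_1\mid M_2]\subset E^n$ of codimension $r$, the condition $(x,p)\in B+\mathcal{O}_\varepsilon$ reads $M_1 x+M_2 p\in\mathcal{O}_{c\varepsilon}$. When $\mathrm{rank}(M_1)=r$, this places $x$ within $\mathcal{O}_{\varepsilon'}$ of a $\Gamma$-translate of $\ker M_1$, an algebraic subgroup of $E^g$ of codimension $r$. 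When $\mathrm{rank}(M_1)<r$, the strong genericity of $p$ forces the corresponding linear relations on $p$ to be vacuous, so that $B$ can be enlarged to a subgroup for which the previous case applies. Hence $x\in S_r(V'_{\kzero'},\Gamma_{\varepsilon'})$, and (ii) applied to $V'$ yields the desired non-density inside $V$.

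The delicate part is $(\mathrm{ii})\Rightarrow(\mathrm{i})$: one must track how the blocks $M_1,M_2$ interact with the annihilators of $p$ in $E^{n-g}$ and quantify the genericity of $p$ effectively enough that rank deficiencies in $M_1$ can be absorbed into $\Gamma$-translates without loss of codimension. The reverse direction is essentially formal once the lifting $\tilde V$ is built, modulo checking that adjoining the coordinates $\gamma_1,\dots,\gamma_s$ never decreases the codimension of the relevant algebraic subgroup.
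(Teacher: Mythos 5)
Your overall strategy --- translating between $(V,\Gamma)$ and a weak-transverse product via the two maps $x\mapsto(x,\gamma)$ and $(x,p)\mapsto x$ --- is exactly the paper's (Theorem~\ref{equi34}, following the argument of [Viada 2008, Theorem 9.1]). However, your construction in $(\mathrm{i})\Rightarrow(\mathrm{ii})$ has a genuine gap. You adjoin the generators $\gamma_1,\dots,\gamma_s$ of $\Gamma\subset E^g$ as points of $E^g$ and assert that $\tilde V=V\times(\gamma_1,\dots,\gamma_s)$ is, after an isogeny, weak-transverse in $E^{g(s+1)}$. This fails: weak-transversality of $V\times q$ forces $q$ to lie in no proper algebraic subgroup of $(E^g)^s$, i.e.\ the $gs$ coordinates of the $\gamma_i$ in $E$ must be $\ze$-linearly independent, which is generally false (take $\Gamma=\ze\gamma_1$ with $\gamma_1=(P,\dots,P)$ on the diagonal of $E^g$; then $\tilde V$ sits inside a proper algebraic subgroup, and no isogeny of the ambient variety can undo that). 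Statement (i) therefore cannot be applied to your $\tilde V$. The paper circumvents this by first replacing $\Gamma$ by $\Gamma_0^g$, where $\Gamma_0\subset E$ is the division group of the \emph{coordinate} group of $\Gamma$, and adjoining a maximal free set $\gamma_1,\dots,\gamma_s\in E$ of $\Gamma_0$; then $\gamma=(\gamma_1,\dots,\gamma_s)\in E^s$ has rank $s$ by construction, $V\times\gamma$ is genuinely weak-transverse in $E^{g+s}$, and $S_r(V_\kzero,\Gamma_\varepsilon)\subset S_r(V_\kzero,(\Gamma_0^g)_\varepsilon)$ makes the replacement harmless.

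For $(\mathrm{ii})\Rightarrow(\mathrm{i})$ your outline coincides with the paper's, but the two points you yourself flag as delicate are where all the content lies and are left open: one needs the effective non-degeneracy of $p$ (the constants $c_0(p)$, $\varepsilon_0(p)$ of the Proposition quoted from [Viada 2008, Proposition 3.3]) both to eliminate the rows of $[M_1|M_2]$ with vanishing $M_1$-block (this is the reduction to Quasi-special morphisms of Lemma~\ref{ind}, and it only works for $\varepsilon\le\varepsilon_0(p)$), and to control the norm inflation when solving $M_1y=-M_2p+\zeta$ inside $\Gamma_p^g+\mathcal{O}_{\varepsilon K'}$; note also that one must use the \emph{division} group $\Gamma_p$ rather than the group generated by the coordinates of $p$, since $y$ is produced by dividing by a determinant of a submatrix of $M_1$. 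So the proposal is a correct plan of the paper's proof, but with one construction that breaks down as written and with the quantitative core unproved.
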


We shall then prove Theorem \ref{main} part ii. Like for curves, 
the strategy of the  proof is based on two steps.
A union of infinitely many sets is non-Zariski dense if:
\begin{itemize}
 \item[(1)] the  union can be taken over    finitely
many sets,
\item[(2)] all sets in the union are
non-Zariski dense.
\end{itemize}

Part (1)  is a 
typical problem of Diophantine approximation; we approximate an
algebraic subgroup with a subgroup of bounded degree (see Proposition \ref{centro}).
 
The second step (2) is a problem of   height theory and its proof relies on  an essentially optimal lower bound for the normalized height of a transverse subvariety in $E^g$,  Theorem
\ref{sin2} below. This part is delicate. The dimension of the
variety intervenes  heavily on the estimates we provide. A fundamental idea is to reduce the problem to the study of varieties with finite stabilizer (see section \ref{due}).\\

We define 
$\mu(\varieta)$ as the supremum
of the   reals $\epsilon(\varieta)$ such that $S_g(\varieta,\mathcal{O}_{\epsilon(\varieta)})=\varieta\cap
\mathcal{O}_{\epsilon(\varieta)}$ is non-Zariski dense in $V$. Work by  Ullmo \cite{ulm} and Zhang \cite{zang} proves the  Bogomolov Conjecture. This shows that   $\mu(V)>0$, for $V$ transverse. A first effective lower bound
for  $\mu(\varieta)$  is
provided by S. David and P. Philippon  \cite{sipa}  Theorem 1.2.
The type of bounds we need are an  elliptic analogue of  Amoroso and David \cite{fra} Theorem 1.4. Such a result is proven by Galateau in his Ph.D. thesis for $d\ge g-2$, and  in a preprint \cite{aur} for varieties in a product of elliptic curves with or without C.M. (he gives estimate for the square of  $\mu(V)$).

\begin{thm}
[Bogomolov type bound, Galateau \cite{aur}] \label{sin2}
Let $V$ be a transverse subvariety  of $E^g$ of codimension $\cod V$ defined over $\overline{\qe}$. 
For  $\eta>0$, there exists a positive effective constant $ c(E^g,\eta)$ depending on the ambient variety  and $\eta$,  such that 
for  $$\epsilon(V,\eta) = \frac{c(E^g,\eta)} {(
\deg_{{\mathcal{L}}} V)^{\frac{1}{2\cod V}+\eta}}$$
the set 
$$V(\overline{\mathbb{Q}}) \cap \mathcal{O}_{\epsilon(V,\eta)}$$ is non-Zariski    dense in $V$.

\end{thm}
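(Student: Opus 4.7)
The plan is to adapt to the elliptic setting the strategy used by Amoroso and David \cite{fra} in the toric case, replacing the Frobenius-like action of small primes on $\mathbb{G}_m^n$ by the multiplication-by-$N$ isogenies $[N]\colon E^g\to E^g$. I argue by contradiction: suppose that for some $\epsilon$ strictly smaller than the bound in the statement, the set $V\cap\mathcal{O}_\epsilon$ is Zariski-dense in $V$, and seek a contradiction via an arithmetic B\'ezout inequality applied to a well-chosen intersection.

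The first step is to produce many small-height points. Multiplication by $N$ scales the N\'eron--Tate height on $E^g$ by $N^2$, so any $P\in V$ of height at most $\epsilon$ has $N^{2g}$ preimages under $[N]$ of height at most $\epsilon/N^2$. Equivalently, the $N^{2g}$ torsion translates $V+\tau$ with $\tau\in E^g[N]$ each carry a Zariski-dense family of small-height points. Transversality of $V$ ensures that $V$ is not contained in $V+\tau$ for $\tau$ outside a thin exceptional set, so the intersection $V\cap(V+\tau)$ is proper whenever $\tau$ is generic.

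The second step is to intersect. Choose $\cod V$ torsion translates $\tau_1,\ldots,\tau_{\cod V}$ of order $N$ such that the successive intersections $W_i=V\cap(V+\tau_1)\cap\cdots\cap(V+\tau_i)$ all have the expected codimension $i$ in $V$; such a tuple exists by transversality together with a pigeonhole count over the $N^{2g}$ available translates. An arithmetic B\'ezout inequality (Philippon, or Bost--Gillet--Soul\'e) bounds $\deg W_{\cod V}$ by roughly $(\deg V)^{\cod V + 1}$, and controls its normalised height in terms of $\deg V$, $\epsilon$, and the height contribution of the translates. On the other hand, $W_{\cod V}$ is a zero-dimensional subscheme contained in $V\cap\mathcal{O}_{\epsilon}$, so its height is at most $\epsilon\cdot\deg W_{\cod V}$; comparing the two estimates forces a lower bound on $\epsilon$ depending on $\deg V$ and $N$.

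The final step is to optimise: choosing $N$ of size roughly $(\deg V)^{1/(2\cod V)}$ balances the two bounds and yields the exponent $\frac{1}{2\cod V}+\eta$. The factor $2$ comes from the quadratic scaling of the N\'eron--Tate height under $[N]$, while $\cod V$ enters through the iterated B\'ezout count. The principal obstacle is the absence of complex multiplication: with only rational isogenies $[N]$ available, the reduction modulo a good prime produces only a cyclic Frobenius rather than a full $\mathrm{End}(E)$-action, and this rigidity forces the $\eta$ loss in the exponent as well as a careful selection of auxiliary primes of good ordinary reduction for $E$. Keeping the constant $c(E^g,\eta)$ independent of $\deg V$ is the technical heart of the argument and relies on effective versions of B\'ezout together with a Zhang--Ullmo type lower bound on the essential minimum of the intermediate subvarieties $W_i$.
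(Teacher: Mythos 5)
This theorem is not proved in the paper at all: it is imported verbatim from Galateau \cite{aur} (following the toric template of Amoroso--David \cite{fra}) and used as a black box, so there is no internal argument to compare yours against. Judged on its own terms, your sketch does not reconstruct Galateau's proof and, more importantly, does not work as written. Galateau's argument is a Dobrowolski-type transcendence proof: an auxiliary section of small height vanishing on $V$ is produced by a Siegel lemma (or slopes inequality), one extrapolates using the fact that at a prime of good \emph{ordinary} reduction the reduction of $V$ is stable under Frobenius (this is where the non-C.M.\ hypothesis costs something, via the density of ordinary primes), and a contradiction is reached through Philippon's zero estimates; the exponent $\frac{1}{2\cod V}+\eta$ comes out of the Hilbert-function bookkeeping in the zero estimate, not from balancing an arithmetic B\'ezout inequality against the quadratic scaling of the height under $[N]$. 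You correctly identified two ingredients (ordinary primes, the $N^2$ height scaling) but assembled them into a different, intersection-theoretic scheme.

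The concrete gaps in that scheme are the following. First, the preimages under $[N]$ of small points of $V$ lie on $[N]^{-1}(V)$, which is an irreducible-component-wise union of varieties of degree roughly $N^{2\cod V}\deg V$; it is not a union of torsion translates $V+\tau$, so your ``equivalently'' is a non sequitur, and the torsion translates $V+\tau$ carry small points only for the trivial reason that torsion has height zero. Second, the dimension count in the iterated intersection is wrong: a proper intersection $V\cap(V+\tau)$ has codimension $\cod V$ in $V$, not $1$, so $W_{\cod V}$ is generically empty (not zero-dimensional) unless $\dim V=\cod V$; and there is no mechanism forcing the small points of $V$ to survive into $W_i$ --- for that you would need $x$ and $x+\tau_i$ both in $V$ for a dense set of small $x$, which pigeonhole over differences of small points gives you only for \emph{small} (not torsion) translates, leading back to the Ullmo--Zhang equidistribution setting rather than to an effective bound. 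Third, the choice $N\approx(\deg V)^{1/(2\cod V)}$ is reverse-engineered from the answer: the two inequalities you propose to compare are never actually written down, and the uniformity of $c(E^g,\eta)$ in $\deg V$ --- which the paper stresses is the whole point of Theorem~\ref{sin2} for its application --- is asserted rather than obtained. As it stands the proposal is a plausible-sounding narrative, not a proof; for this paper the honest move is simply to cite \cite{aur}.
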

The bound  $\epsilon(V,\eta)$ depends on the invariants of the ambient variety  and  on the degree of $V$.
The quasi optimal dependence  on the degree of $V$ and the non-dependece on the  field of definition and height of   $V$ are of
crucial importance for our application.

The non-Zariski density for transverse varieties  has often been
investigated with  the method introduced by Bombieri, Masser and Zannier in
\cite{BMZ}. To show
  the non-density property 
they  use an essentially optimal   Generalized Lehmer Conjecture.
 In \cite{V} we applied their method  to a transverse curve, $\Gamma=0$ and $\varepsilon=0$. In \cite{RV} R\'emond and the author  extended the method to transverse curves, $\varepsilon=0$ and any  $\Gamma$ of finite rank. In \cite{Gfin}-\cite{Gprep}
  R\'emond generalized it to varieties satisfying   a  geometric property  stronger than
 transversality.

The main advantage of using a Bogomolov instead of a Lehmer type bound is   that an essentially optimal Generalized Lehmer Conjecture is proven for C.M. abelian varieties and it is not likely to be proven in a near
future for non C.M. abelian varieties. On the contrary  the
Bogomolov type bound is proven
at least for some non C.M. abelian varieties. 
In addition, our method gives  the non-density for a neighbourhood  of
positive radius $\varepsilon$. At present it is not known how to obtain results of this kind in abelian varieties using a Lehmer type bound.

The non-Zariski density for a transverse subvariety in a torus and $\Gamma=0$ has
been  studied  independently by P.  Habegger  \cite{PHIL}. He uses the
Bogomolov type bound proven by Amoroso and David \cite{fra} and 
proves that for
a transverse variety  $V$  in $\mathbb{G}^n_m$,  there
exists $\varepsilon>0$ such that the set
$S_{2d}(V,\mathcal{O}_\varepsilon)$ is non-Zariski dense. \\

In the next section we fix the notation and  recall the results we need from \cite{io}. In section 3 we present the four main steps of the proof of Theorem \ref{main}. Section \ref{due} is the core of this article: we prove the non-density of the intersections. In section 5 we conclude the proof of the main theorem. In the final section we prove that sometimes the height is bounded.\\

\noindent{\bf Acknowledgements.}
I kindly  thank the Referee for his valuable suggestions.

\newpage

 \section{preliminaries}

In the following, we aim to be as transparent as possible, polishing statements from technicality. Therefore,
 we  present the proofs for a power of an elliptic curve $E$  without C.M. Then $\emor(E)$ is identified with $\ze$. Proofs for a subvariety in a product of arbitrary elliptic curves are slightly more technical.

\subsection{Small points}
On $E$, we fix a symmetric very ample line bundle $\mathcal{L}_0$. On
$E^g$, we consider the bundle $\mathcal{L}$ which is  the tensor product of the
pull-backs of $\mathcal{L}_0$ via the natural projections on the
factors. Degrees are computed with respect to the polarization $\mathcal{L}$. 
Usually $E^g$ is endowed with the $\mathcal{L}$-canonical N\'eron-Tate height
$h'$. Though,  we prefer to 
define  on $E^g$ the height of the maximum 
\begin{equation*}
h(x_1,\dots ,x_g)=\max_i(h(x_i)),
\end{equation*}
where $h(x_i)$  on $E $  is given by the  $\mathcal{L}_0$-canonical N\'eron-Tate height.
Note that  $h(x)\le h'(x)\le gh(x)$. Hence, the two norms induced by $h$ and $h'$ are equivalent.
We denote by $||\cdot||$ the semi-norm induced by $h$ on $E^g $.

For $\varepsilon \ge 0$, we denote 
$$\mathcal{O}_{\varepsilon}=\{ \xi \in E^g  : ||\xi|| \le \varepsilon\}.$$

\subsection{Morphisms and their height}

We denote by $M_{r,g}(\ze)$ the  module of $r\times g$ matrices with
entries in  $\ze$.
For $F=(f_{ij})\in M_{r,g}(\ze)$,  we define the height of $F$ as the maximum of the absolute value of its entries
$$H(F)=\max_{ij}|f_{ij}|.$$
A morphism $\phi:E^g \to E^r$ is identified with an integral matrix. 
Let $a\in \ze $, we denote by $[a]$ the multiplication by $a$.

 Note that, the set of morphisms of height less than a  constant is a finite set.
 
\subsection{Algebraic subgroups}

Let $B$ be an algebraic subgroup of $E^g$ of codimension $r$. Then $B \subset \ker \phi_B$ for a  surjective morphism $\phi_B:E^g\to E^r$.
 Conversely, we denote by $B_\phi$ the kernel of a surjective morphism $\phi:E^g \to E^r$. Then $B_\phi$  is an algebraic subgroup of $E^g$ of codimension $r$.  Note that $r$ is the rank of $\phi$.
 An easy observation (see for instance \cite{V} page  61 line -3) gives that each  of the $r$ equations defining $B_\phi$ has degree at most $H(\phi)^2$, up to a multiplicative constant  depending  on $\deg E$ and $g$. This directly implies:
  \begin{lem}
 \label{degb}
 Let $\phi:E^g \to E^r$ be a surjective morphism. Then
$$\deg B_\phi\le c_0 H(\phi)^{2r}$$
 where $c_0$ is a constant depending  on $\deg E$ and $g$.
 \end{lem}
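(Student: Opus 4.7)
The plan is to apply Bezout's theorem to the $r$ hypersurfaces cut out by the components of $\phi$. Write $\phi=(\phi_1,\dots,\phi_r)$ where $\phi_i:E^g\to E$ is given by the $i$-th row of the matrix $(f_{ij})$, so $\phi_i(x_1,\dots,x_g)=\sum_{j=1}^g f_{ij}x_j$ in the group law of $E$. Since $\phi$ is surjective of rank $r$, the $\phi_i$ are linearly independent homomorphisms, and
\[
B_\phi=\ker\phi=Z_1\cap\cdots\cap Z_r,\qquad Z_i:=\ker\phi_i,
\]
with each $Z_i$ a hypersurface (codimension-one subgroup) of $E^g$. This decomposition realizes $B_\phi$ as a proper intersection of $r$ divisors, which is the setting in which Bezout applies.

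The key step is then to bound $\deg_{\mathcal{L}} Z_i$ by a multiple of $H(\phi)^2$. Writing $\phi_i=\sum_j [f_{ij}]\circ p_j$, where $p_j:E^g\to E$ is the $j$-th projection, one computes $\phi_i^*\mathcal{L}_0$ in the N\'eron-Severi group of $E^g$ using the theorem of the cube together with $[n]^*\mathcal{L}_0\cong\mathcal{L}_0^{\otimes n^2}$ for symmetric $\mathcal{L}_0$; the class is a $\ze$-linear combination of $p_j^*\mathcal{L}_0$ and correlation classes $p_{jk}^*(m^*\mathcal{L}_0)$, with coefficients that are quadratic expressions in the entries $f_{ij}$ and hence bounded by a multiple of $H(\phi)^2$. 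Intersecting the resulting effective divisor $Z_i\sim \phi_i^{-1}(0)$ with $\mathcal{L}^{g-1}$, where $\mathcal{L}=\bigotimes_j p_j^*\mathcal{L}_0$, gives
\[
\deg_{\mathcal{L}} Z_i\le c_1 H(\phi)^2,
\]
with a constant $c_1$ depending only on $\deg E$ and $g$, since the intersection numbers of the finitely many N\'eron-Severi generators involved depend only on the ambient geometry.

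Finally, Bezout's theorem yields
\[
\deg_{\mathcal{L}} B_\phi\le \prod_{i=1}^{r}\deg_{\mathcal{L}} Z_i\le c_1^r H(\phi)^{2r}\le c_0 H(\phi)^{2r},
\]
after absorbing $c_1^r$ into the constant $c_0=c_0(\deg E,g)$. The only genuine content of the argument is the quadratic bound on $\deg Z_i$; everything else is a direct consequence of intersection theory on $E^g$. The main (mild) obstacle is therefore the bookkeeping of cross terms in the N\'eron-Severi computation of $\phi_i^*\mathcal{L}_0$, and in checking that the constants depend only on the ambient $E^g$ and not on $\phi$; this is exactly the observation recorded in \cite{V} page 61.
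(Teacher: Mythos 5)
Your proof is correct and follows the same route the paper intends: the paper's entire justification is the cited observation (from \cite{V}, p.~61) that each of the $r$ equations cutting out $B_\phi$ has degree at most a constant times $H(\phi)^2$, combined with Bezout, and your N\'eron--Severi computation of $\phi_i^*\mathcal{L}_0$ is exactly the content of that observation. The only detail worth noting is that the Bezout step should be invoked in the refined form (sum of degrees of components of $Z_1\cap\dots\cap Z_r$ bounded by $\prod_i\deg Z_i$ in the projective embedding given by $\mathcal{L}$), since $B_\phi$ is in general reducible; this changes nothing in the conclusion.
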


 
 
\subsection{Subgroups}

Let $\Gamma$ be a subgroup  of $E^g $ of finite rank $s$. Then $\Gamma$ is a $\ze$-module of rank $s$. We call a maximal free set  of $\Gamma$ a set of $s$ linearly independent elements of $\Gamma$, in other words a basis of $\Gamma \otimes_\ze \qe$.  If $\Gamma$ is a free module, we call integral generators a set of $s$ generators of $\Gamma$.

The  division group ${\Gamma}_0$ of the coordinates group of the points of $\Gamma$, in short of $\Gamma$, is a subgroup of $E $ defined as 
\begin{equation}
\label{gammazero}
{\Gamma}_0=\{y  \in E   {\rm {\,\,\,such\,\,\,that\,\,\,}}Ny\in\pi( \Gamma){\rm \,\,\,for\,\,\,}  N\in \ze^*{\rm \,\,\,and\,\,\,}\pi:E^g \to E\}.
\end{equation}

Note that, $\Gamma^{g}_0=\Gamma_0\times \dots \times \Gamma_0$  contains $\Gamma$ and it is a module of finite rank. This shows that, to prove non-density statements for $\Gamma$ it is enough to prove them for $\Gamma_0^{g}$.

\begin{D}
\label{rankp}
We say that a point  $p=(p_1,\dots,p_n)\in E^n $ has rank $s$ if  its coordinates group $\langle p_1,\dots ,p_n\rangle$ has rank $s$.   We define $\Gamma_p$ to be the division group of   $\langle p_1,\dots ,p_n\rangle$.
\end{D}

Given a point $p\in E^s$ of rank $s$,  we associate to $p$ a positive real $\varepsilon_0(p)$.  
This value will be used several times in the following.

\begin{propo}[\cite{io} Proposition 3.3 with $\tau=1$, $\emor(E)=\ze$,
   $c_0(p)=c_2(p,1)$ and $\varepsilon_0(p)=\varepsilon_0(p,1)$]
Let  $p_1, \dots, p_s$ be  linearly independent points of
$E $ and $p=(p_1, \dots, p_s)$.  
Then, there exist  positive reals $c_0(p)$   and  $ \varepsilon_0(p)$ such that 
$$c_0(p)\sum_i|b_i|^2||p_i||^2 \le \Big|\Big|\sum_ib_i(p_i-\xi_i)-b\upxi \Big|\Big|^2$$
 for all  $b_1,\dots,b_s,b\in \ze$ with $|b|\le \max_i|b_i|$ and  for all
$\xi_1,\dots, \xi_s ,\upxi \in E $ with $||\xi_i||, ||\upxi|| \le
 \varepsilon_0(p)$.

\end{propo}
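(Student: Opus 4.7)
The plan is to reduce the inequality, after passage to $(E(\overline{\qe})/\mathrm{torsion})\otimes_\ze\re$, to a perturbed Gram-matrix estimate in a real inner product space in which $||\cdot||$ is the norm coming from the N\'eron--Tate height. Since the $p_i$ are $\ze$-linearly independent, their images are $\re$-linearly independent and all have positive norm; in particular $m:=\min_i||p_i||>0$.

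First I would prove the unperturbed version
$$\Big|\Big|\sum_{i=1}^s b_i p_i\Big|\Big|^2 \ \ge\ \lambda\,\sum_{i=1}^s b_i^2\,||p_i||^2 \qquad (b_i\in\re),$$
where $\lambda=\lambda(p)>0$ is the smallest eigenvalue of the Gram matrix $G=(\langle p_i/||p_i||,\,p_j/||p_j||\rangle)_{i,j}$. This matrix is symmetric and positive definite because its defining vectors are linearly independent, and with the substitution $a_i=b_i||p_i||$ the left-hand side becomes $a^tGa\ge\lambda\,(a_1^2+\dots+a_s^2)$.

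Next I would combine this with the reverse triangle inequality applied to the splitting $\sum_i b_i(p_i-\xi_i)-b\upxi = \sum_i b_i p_i - \bigl(\sum_i b_i\xi_i + b\upxi\bigr)$. Writing $M:=\bigl(\sum_i b_i^2\,||p_i||^2\bigr)^{1/2}$ and using $||\xi_i||,||\upxi||\le \varepsilon_0(p)$, $|b|\le\max_i|b_i|$, together with the trivial bound $\max_i|b_i|\le M/m$ (which follows from $M^2\ge m^2\,(\max_i|b_i|)^2$), the error terms are controlled by
$$\sum_i|b_i|\,||\xi_i|| + |b|\,||\upxi|| \ \le\ (s+1)\,\varepsilon_0(p)\,\max_i|b_i| \ \le\ (s+1)\,\varepsilon_0(p)\,M/m.$$
Combined with the unperturbed bound this yields $||\sum_i b_i(p_i-\xi_i)-b\upxi|| \ge \bigl(\sqrt{\lambda}-(s+1)\varepsilon_0(p)/m\bigr)\,M$.

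Finally, choosing $\varepsilon_0(p):=m\sqrt{\lambda}/\bigl(2(s+1)\bigr)$ makes the coefficient at least $\sqrt{\lambda}/2$, and squaring yields the statement with $c_0(p):=\lambda/4$. The only real obstacle is the bookkeeping needed to bound both $b$ and the $b_i$ uniformly by $M/m$ --- the hypothesis $|b|\le\max_i|b_i|$ is the single nontrivial input here --- after which the perturbation is automatically absorbed into half the main term.
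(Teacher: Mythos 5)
Your argument is correct and complete: the positive definiteness of the N\'eron--Tate pairing on the $\re$-span of the (necessarily non-torsion) $p_i$ gives the unperturbed lower bound $\sqrt{\lambda}\,M$, the hypothesis $|b|\le\max_i|b_i|$ together with $\max_i|b_i|\le M/m$ controls the error term by $(s+1)\varepsilon_0(p)M/m$, and your choice of $\varepsilon_0(p)$ absorbs it into half the main term before squaring. The paper itself gives no proof, citing \cite{io} Proposition 3.3, whose argument runs along the same standard lines (a Gram/almost-orthogonality lower bound for $||\sum_i b_ip_i||$ plus absorption of the small perturbations), so your proposal matches the intended proof in substance.
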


\subsection{From transverse to weak-transverse}
Let $V$ be transverse in $E^g$ and let $\Gamma$ be a subgroup  of $E^g $ of finite rank. Let $\Gamma_0$ be the division group of $\Gamma$ and let $s$ be its rank.  If $s=0$ we define $V'=V$. If $s>0$, we denote by $\gamma_1, \dots , \gamma_s$ a maximal free set  of $\Gamma_0 $ and  $$\gamma=(\gamma_1, \dots , \gamma_s).$$
 We define
 $$V'=V\times \gamma.$$
Since $V$ is transverse and $\gamma$ has rank $s$, then $V'$  is weak-transverse  in $E^{g+s}$.

\subsection{ From  weak-transverse to transverse}
Let $V'$ be weak-transverse in $E^{n}$.  If $V'$ is transverse then we define $V=V'$ and $\Gamma=0$. If $V'$ is not transverse, let $H_0$ be the abelian
subvariety of smallest dimension $g$ such that $V'\subset H_0+p^\perp$ for
$p^\perp\in H_0^\perp$ and
 $H_0^\perp$  the orthogonal complement of $H_0$ of dimension $s=n-g$.
Then  $E^n$ is isogenous to $ H_0\times H_0^{\perp}$. Furthermore $H_0$
 is isogenous to $E^g$ and $H_0^{\perp}$ is isogenous to $E^s$.  Let
 $j_0$, $j_1$ and $j_2$ be such  isogenies. 
 We fix the isogeny $$j=(j_1\times j_2)\circ j_0:E^n \to H_0\times H_0^{\perp}\to E^g\times E^s,$$
which sends $H_0$ to $E^g\times 0$ and $H_0^{\perp}$ to $0\times E^{s}$ and $j(p^\perp)=(0,
\dots ,0, p_1,\dots,p_s)$.
Since $V'$ is weak-transverse and defined over $\overline{\qe}$,
  $p=(p_1,\dots,p_s)$ has rank $s$ and is defined over $\overline{\qe}$.

We consider the natural projection on the first $g$ coordinates
\begin{equation*}
\begin{split}
 \pi:&E^g\times E^s \to E^g\\
 &j(V') \to \pi(j(V')).
 \end{split}
 \end{equation*}
 We define
$$V=\pi(j(V'))$$ and
$$\Gamma=\Gamma_p^g.$$

 Since $H_0$ has minimal dimension, the variety $V$ is transverse in $E^g$ and $\Gamma$ has  rank $gs$.
 Finally $$j(V')=V\times p.$$
We remark that we have  defined a bijection  $(V,\Gamma_0^g) \to V'$, which is exactly  what interest us.
\subsection{ Weak-transverse up to an isogeny}
\label{trwtr}
Statements on boundedness of heights and non-density  of sets  are  invariant under an
isogeny of the ambient variety. Namely, given an isogeny $j$ of $E^g$,  Theorem \ref{main} and Conjecture \ref{hb} hold for a variety  if and only if they hold for its image via $j$. Thus,
the previous discussion shows that without loss of generality, we can assume that a weak-transverse variety $V'$ in $E^{n}$ is of the form
$$V'=V\times p$$
where
\begin{enumerate}
\item $V$ is transverse in $E^g$, 
\item    $p=(p_1, \dots ,p_s)$  is  a point in $E^s$ of rank $s$,
\item $n=g+s$.

\end{enumerate}

In short we will say that $V\times p$ is a weak-transverse variety in $E^{g+s}$, to say that $V$ is transverse in $E^g$ and $p\in E^s$ has rank $s$.
This simplifies the setting for weak-transverse varieties.

\subsection{Gauss-reduced morphisms}
 The matrices  in $M_{r\times g}(\ze)$ of the form 
 \begin{equation*}
\phi=(aI_r|L)=
 \left(\begin{array}{cccccc}
a&\dots &0&a_{1,r+1}&\dots &a_{1,g}\\
\vdots & & \vdots & \vdots& & \vdots\\
0&\dots &a&a_{r,r+1}&\dots &a_{r,g}
\end{array}\right),
\end{equation*} 
with $ H(\phi)=a$ and no common factors of  the entries will play a key role in this work.  If $r=g$ simply forget $L$. The following definition of Gauss-reduced is slightly more general than the one given in \cite{io}, namely we omit here the assumption that the entries of the matrix have no common factors. This is a marginal simplification,  overseen in that paper.
\begin{D}[Gauss-reduced Morphisms]
\label{gr} 
Given positive integers $g, r $, we say that a  morphism $\phi:E^g \to E^r$ is Gauss-reduced  if: 
\begin{enumerate}
\item  There esists $a \in \mathbb{N}^*$ such that $aI_{r}$ is a submatrix of $\phi$, with $I_r$ the r-identity matrix,
\item $H(\phi)=a$. 

\end{enumerate}


\end{D}

A morphisms $\phi'$, given by a reordering of the rows of a
morphism $\phi$, has the same kernel as $\phi$. Saying that $aI_r$ is a sub-matrix  of $\phi$ fixes one permutation of the rows of $\phi$.

A reordering of the columns corresponds, instead, to a permutation of the coordinates.
  Statements  will be proven for Gauss-reduced morphisms of the form $\phi=(aI_r|L)$. For each other reordering of the columns  the proofs are analogous. 
Since there are  finitely many permutations  of $g$ columns, the non-density statements will follow. 

There are few easy facts that one shall keep in mind. Let $\psi:E^g \to E^r$ be a morphism and $\phi:E^g \to E^r$ be a Gauss-reduced morphism, then
\begin{enumerate}
\item For $x \in E^g $,  $$||\psi(x)||\le
  gH(\psi) ||x||$$ and  $$||\phi(x)||\le
  (g-r+1)a||x||.$$
\item For $x \in E^r\times \{0\}^{g-r}$,   $$\phi(x)=[a]x.$$
\end{enumerate}

The following lemma shows that every abelian subvariety of codimension
$r$ is contained in the kernel of a Gauss-reduced morphism of rank $r$.

\begin{lem} [\cite{io} Lemma 4.4 ii. with $\emor(E)=\ze$]
\label{tor}

Let $\psi:E^g\to E^r$ be a morphism of rank $r$. Then,
there exists a Gauss-reduced  morphism $\phi:E^g \to E^r$  such that 
$$B_{\psi}\subset B_{\phi}+(E^r_{\rm{Tor}}\times \{0\}^{g-r}).$$

\end{lem}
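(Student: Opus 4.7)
The plan is to reduce the statement to linear algebra over $\ze$. Identify $\psi$ with its integer matrix $\Psi\in M_{r,g}(\ze)$, which has rank $r$. Observe first that the desired inclusion is insensitive to left multiplication by an element $M\in M_r(\ze)$ with $\det M\neq 0$: if $\phi:=M\psi$, then any $x\in B_\psi$ satisfies $\phi(x)=M(\psi(x))=0$, so $B_\psi\subset B_\phi$, and the torsion summand in the conclusion is in fact not needed. Hence it suffices to exhibit $M\in M_r(\ze)$ with $\det M\neq 0$ such that, after a suitable reordering of the columns of $E^g$, the product $M\Psi$ has the Gauss-reduced form $(aI_r|L)$ with $H(M\Psi)=a$; the convention recalled after Definition~\ref{gr} then takes care of the column reordering.

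The construction uses a ``maximal minor'' trick. Since $\Psi$ has rank $r$ over $\qe$, it contains an $r\times r$ submatrix of nonzero determinant. Among all such submatrices I choose one, call it $A$, whose $|\det A|$ is maximal, and I permute columns of $\Psi$ so that $\Psi=(A|B)$ with $B\in M_{r,g-r}(\ze)$. Setting $a:=|\det A|$ and
\[
M:=(\mathrm{sgn}\det A)\cdot\mathrm{adj}(A)=aA^{-1},
\]
one has $M\in M_r(\ze)$ (because $A$ is integral), $\det M=\pm a^{r-1}\neq 0$, and $M\Psi=(aI_r|MB)$. Thus $\phi:=M\psi$ has rank $r$ and its matrix already carries the $aI_r$ block required for the Gauss-reduced form.

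The main, and essentially only, point to verify is the height estimate $H(M\Psi)=a$, i.e.\ $|(MB)_{ij}|\le a$ for every entry of $MB$. Cofactor expansion along the $i$-th column yields
\[
(MB)_{ij}=\pm\det(A_{i\to j}),
\]
where $A_{i\to j}$ is the $r\times r$ matrix obtained from $A$ by replacing its $i$-th column by the $j$-th column of $B$. Since $A_{i\to j}$ is itself an $r\times r$ submatrix of $\Psi$, the maximality of $|\det A|$ gives $|(MB)_{ij}|\le|\det A|=a$. This identification $(MB)_{ij}=\pm\det(A_{i\to j})$, together with the extremal choice of $A$, is the crux of the argument; once it is in place, $\phi$ is Gauss-reduced of rank $r$ and satisfies $B_\psi\subset B_\phi\subset B_\phi+(E^r_{\mathrm{Tor}}\times\{0\}^{g-r})$, as required.
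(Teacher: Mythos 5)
Your proof is correct and follows essentially the same route as the cited Lemma 4.4 of \cite{io}: select the $r\times r$ minor of maximal absolute determinant, multiply on the left by the signed adjugate to create the $aI_r$ block, and bound the remaining entries by $a$ via Cramer's rule and the maximality of the chosen minor. The only deviation is that you do not divide out the common factor of the entries --- which is precisely why the torsion summand becomes superfluous in your version --- and this is consistent with the paper's Definition \ref{gr}, which explicitly drops the ``no common factors'' condition of \cite{io}.
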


Taking intersections with    $V_{K}$,
the previous lemma translates  immediately as:
\begin{lem}
\label{tor1}

For any reals $K,\varepsilon\ge1$ and integer $r\ge1$, it holds 
$$S_r(V_{K},(\Gamma_0^g)_\varepsilon)=\bigcup_{\substack{\phi:E^g \to E^r\\ {\rm{Gauss-reduced}}} }V_{K} \cap (B_\phi+(\Gamma_0^g)_{\varepsilon} ).$$

\end{lem}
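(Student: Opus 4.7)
The plan is to prove the two inclusions separately. The inclusion $\supseteq$ is the easy direction: for any Gauss-reduced $\phi : E^g \to E^r$, the submatrix $aI_r$ has rank $r$, so $\phi$ has rank $r$ and $B_\phi$ is an algebraic subgroup of codimension exactly $r$. Hence $B_\phi$ appears among the subgroups $B$ of codimension at least $r$ in the definition of $S_r(V,(\Gamma_0^g)_\varepsilon)$, so intersecting with $V_K$ shows $V_K \cap (B_\phi + (\Gamma_0^g)_\varepsilon)$ is contained in $S_r(V_K,(\Gamma_0^g)_\varepsilon)$, and the union over all such $\phi$ remains inside.

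For the harder inclusion $\subseteq$, I would pick $x \in S_r(V_K,(\Gamma_0^g)_\varepsilon)$ and write $x = b + \gamma + \xi$ with $b \in B$ an algebraic subgroup of codimension $r' \geq r$, $\gamma \in \Gamma_0^g$, and $||\xi|| \leq \varepsilon$. The first step is to reduce to the case of codimension exactly $r$: since the connected component $B^0$ is an abelian subvariety of codimension $r'$, it is contained in an abelian subvariety $B'$ of codimension exactly $r$ (for instance by completing a partial flag of abelian subvarieties obtained via the Poincaré reducibility theorem applied to $E^g/B^0$). The finite group $B/B^0$ is a group of torsion points, hence $B \subset B' + E^g_{\mathrm{Tor}}$. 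Choosing a surjective $\psi : E^g \to E^r$ with $B' \subset B_\psi$, we obtain $b \in B_\psi + E^g_{\mathrm{Tor}}$.

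Next, I would apply Lemma \ref{tor} (stated above) to this $\psi$: it furnishes a Gauss-reduced morphism $\phi : E^g \to E^r$ with $B_\psi \subset B_\phi + (E^r_{\mathrm{Tor}} \times \{0\}^{g-r})$. Combining the containments gives
$$x \in B_\phi + \bigl(E^r_{\mathrm{Tor}} \times \{0\}^{g-r}\bigr) + E^g_{\mathrm{Tor}} + \gamma + \xi.$$
The key observation is that the division group $\Gamma_0$ always contains $E_{\mathrm{Tor}}$ (by the very definition \eqref{gammazero}, since $Ny = 0 \in \pi(\Gamma)$ for any torsion $y$), so the torsion contributions and $\gamma$ together lie in $\Gamma_0^g$. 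Hence $x \in B_\phi + (\Gamma_0^g)_\varepsilon$, and since $x \in V_K$, it lies in the appropriate term of the right-hand union.

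The main obstacle I anticipate is verifying cleanly the reduction from codimension $\geq r$ to codimension exactly $r$ for possibly disconnected algebraic subgroups — one must track the torsion component separately and then absorb it together with the torsion produced by Lemma \ref{tor} into $\Gamma_0^g$. Everything else is essentially a bookkeeping argument built on Lemma \ref{tor} and the fact that torsion points sit inside any division group.
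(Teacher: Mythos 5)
Your proof is correct and follows the same route the paper intends: the paper derives this lemma directly from Lemma \ref{tor} by absorbing the torsion translates into the division group $\Gamma_0^g$ (which contains $E_{\rm Tor}^g$) and intersecting with $V_K$, exactly as you do. Your extra care with the reduction from codimension $\ge r$ to exactly $r$ and with disconnected subgroups is sound, though note the paper's definition of $S_r$ ranges only over abelian subvarieties (connected), so the $B/B^0$ bookkeeping is not strictly needed.
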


\subsection{  Quasi-special and Special Morphisms}

Special morphisms play a key role in the study of  weak-transverse varieties. A Special morphism $\tilde\phi$ is Gauss-reduced. In addition  the multiplication by $H(\tilde\phi)$ acts on some of the first $g$-coordinates.

\begin{D}[Quasi-special and Special Morphisms]
\label{spec} Given positive  integers $g,s,r$,
a morphism  $\tilde{\phi}:E^{g+s} \to E^r$ is Quasi-special  if
 there exist   a Gauss-reduced morphisms  $\phi:E^g\to E^r$ and a morphism $\phi':E^s \to E^r$ such that 
\begin{itemize}
\item[i.] $\tilde{\phi}=(\phi|\phi')$.
\end{itemize}
The morphism $\tilde{\phi}:E^{g+s} \to E^r$ is Special if it satisfies the further condition
\begin{itemize}
\item[ii.] $H(\tilde\phi)=H(\phi).$
\end{itemize}
\end{D}

Note that, for $g=2$ and $r=s=1$, the morphism $(0,0,1)$ is Gauss-reduced, but not Special. While $(1,0,2)$ is Quasi-special but not Special. In addition, for $g=r=2$, $s=1$,  $\phi=(I_2|{}^2_3)
$ is Quasi-special but not Gauss-reduced.

We want to show that if a point of large rank is in the kernel of a morphism then it is  in the kernel of a Quasi-special morphism.

\begin{lem}
\label{ind}
Let $V$ be an algebraic subvariety of $E^g$. Let $p=(p_1,\dots,p_s)$ be a point in $E^s $ of rank $s$. There exists $\varepsilon_0(p)>0$  depending on $p$ such that  for all $\varepsilon\le \varepsilon_0(p)$, for any subset $V^e$ of $V$  and positive integer $r$ it holds
$$S_r(V^e \times p,\mathcal{O}_\varepsilon)\subset
\bigcup_{\substack{\tilde\phi:E^{g+s} \to E^r \\  {\rm{Quasi-special}}}} (V^e \times p)\cap (B_{\tilde\phi}+\mathcal{O}_{\varepsilon} ).$$

\end{lem}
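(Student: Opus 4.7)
Plan: I would take $\varepsilon_0(p)$ to be the constant supplied by the Proposition above on small linear combinations of the $p_i$, and argue in two movements. Fix $(x,p)\in S_r(V^e\times p,\mathcal{O}_\varepsilon)$ with $\varepsilon\le\varepsilon_0(p)$, and write $(x,p)=b+\xi$ for some $b$ in an abelian subvariety $B\subset E^{g+s}$ of codimension $r'\ge r$ and $\xi=(\xi_g,\xi_s)\in\mathcal{O}_\varepsilon$. Let $M_B\subset\ze^{g+s}$ be the annihilator of $B$; because $B$ is connected, $M_B$ is saturated of rank $r'$.

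First movement (trivial $p$-kernel). I check that $M_B\cap(\{0\}^g\times\ze^s)=0$. If $m=(0,m_s)\in M_B$, then $m(b)=0$ yields $\sum_i m_{s,i}(p_i-(\xi_s)_i)=0$; applying the stated Proposition with $b_i=m_{s,i}$, $b=0$, $\xi_i=(\xi_s)_i$ (so $||(\xi_s)_i||\le\varepsilon\le\varepsilon_0(p)$) forces $c_0(p)\sum|m_{s,i}|^2||p_i||^2\le 0$, and since $p$ has rank $s$ each $||p_i||>0$, so $m_s=0$. Hence $\pi_g:M_B\to\ze^g$ is injective, its image $M_B^g$ has rank $r'$, and the inverse of $\pi_g|_{M_B}$ is a $\ze$-linear section $\sigma(m)=(m,T(m))$ with $T:M_B^g\to\ze^s$.

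Second movement (production of $\tilde\phi$). Let $\tilde M\subset\ze^g$ be the saturation of $M_B^g$, of finite index $e=[\tilde M:M_B^g]$. Choose any rank-$r$ sublattice of $\tilde M$ and apply Lemma \ref{tor} in $E^g$ to obtain a Gauss-reduced $\phi:E^g\to E^r$ of height $a$. Since $\tilde M$ is saturated, the rows of $\phi$ are integer vectors in $\tilde M\otimes\qe$, hence in $\tilde M$; consequently the rows of $e\phi$ lie in $M_B^g$, and $e\phi$ is still Gauss-reduced (height $ea$, submatrix $(ea)I_r$). Define
\[
\tilde\phi=\bigl(e\phi\;\big|\;T(e\phi)\bigr):E^{g+s}\to E^r.
\]
By construction $\tilde\phi$ is Quasi-special and its rows lie in $M_B$, so $\tilde\phi(B)=0$, that is, $B\subset B_{\tilde\phi}$, and
\[
(x,p)\in B+\mathcal{O}_\varepsilon\subset B_{\tilde\phi}+\mathcal{O}_\varepsilon,
\]
which places $(x,p)$ in the term of the union indexed by $\tilde\phi$.

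Main obstacle. The delicate point is the second movement: a direct application of Lemma \ref{tor} in $E^{g+s}$ could place the $aI_r$ submatrix of the resulting Gauss-reduced morphism across columns belonging to the $p$-factor, violating the Quasi-special shape. The rank-$s$ hypothesis on $p$, exploited in the first movement, is exactly what forces the Gauss-reduced normalisation to be carried out inside the first $g$ coordinates; the incidental mismatch between the Gauss-reduced rows that Lemma \ref{tor} produces in the saturated $\tilde M$ and the possibly non-saturated $M_B^g$ is then harmlessly absorbed by the finite-index rescaling by $e$, which preserves the Gauss-reduced structure.
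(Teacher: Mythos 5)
Your argument is correct and is in substance the same as the one the paper invokes from \cite{io} Lemma 6.2: the constant $\varepsilon_0(p)$ from the quoted Proposition is used exactly as you use it, namely to show that the annihilator lattice of $B$ meets $\{0\}^g\times\ze^s$ trivially, so that the Gauss-reduction of Lemma \ref{tor} can be performed with pivots among the first $g$ columns, producing a Quasi-special morphism whose kernel contains $B$. Your explicit treatment of the saturation index $e$ is a careful (and harmless) refinement of that argument.
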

\begin{proof}
The proof is the analog of \cite{io} Lemma 6.2, where we shall read $V^e$ for $C$.

\end{proof}

\section{The proof of Theorem \ref{main}: The four main steps}
\label{passi}

In the following, we present the four main steps for the  proof of Theorem \ref{main}.

\begin{itemize}
\item[(0)] We prove Theorem \ref{equii} which claims  that Theorem \ref{main} i. and ii. are equivalent. 
\end{itemize}

We then shall prove Theorem \ref{main} ii.

\begin{itemize}
\item[(1)] In Proposition \ref{speciali} we get rid of $\Gamma$ by
  considering instead of $V$ the weak-transverse variety $V\times
  \gamma$. The key point is that    for $V\times \gamma$ we
  consider 
  \begin{equation*}
   \bigcup_{\substack{\tilde\phi:E^{g+s} \to E^{d+1}\\ \rm{Special} }}(V_{K}
 \times \gamma) \cap (B_{\tilde{\phi}}
  +\mathcal{O}_{\delta}) 
  \end{equation*}
  where  the union ranges only over {\bf{Special}} morphisms (and
  not over all  Gauss-reduced morphisms). 

\item[(2)] In Proposition \ref{centro} we show that the above union  is contained in the union of {\bf{finitely many}} sets of the kind
$$(V_{K}
 \times \gamma) \cap (B_{\tilde{\phi}}
  +\mathcal{O}_{\delta'/H(\tilde{\phi})^{\expalt}}). $$  Important is that the
  radius of the  neighbourhood of these finitely many sets is
  {\bf{inversally proportional to the height of the morphism}} (and it
  is not a constant $\delta$ like in the union in step (1)).
  
  \item[(3)] In Proposition \ref{finito} we show that  if the stabilizer of $V$ is finite, then there exists
    $\varepsilon>0$ such that, for all 
    Special morphisms $\tilde\phi$ of rank at least $d+1$, the  set $$(V_{K}
 \times \gamma) \cap (B_{\tilde{\phi}}
  +\mathcal{O}_{\delta/H(\tilde{\phi})})$$ is non-Zariski dense in $V\times \gamma$.

\end{itemize}

The statements (0), (1) and (2) are an immediate generalization of \cite{io} Theorem 1.3,
Proposition 10.2  and Proposition A respectively.  Part (3) is the most delicate and
it is presented in section \ref{due}, below.  It is the counterpart to  \cite{io} Proposition B. In order to  gain advantage from Theorem \ref{sin2}, we need to require that the stabilizer of the variety is finite. In view of Lemma \ref{stabi} this assumption will not be restrictive.  

\fbox{Part (0)} 
Theorem \ref{equii} is an immediate consequence of 
\begin{thm}
\label{equi34}
Let $V$ be an irreducible algebraic subvariety of $E^g$.  Then, for $\varepsilon\ge 0$ and $r$ a positive integer:

\begin{enumerate}
\item[i.]
 The map $x \to (x,\gamma)$ defines an injection
$$S_r(V,\Gamma_{\varepsilon})\hookrightarrow  S_r(V\times \gamma,\mathcal{O}_{\varepsilon}).$$ 
{\rm Recall that $\gamma$ is a maximal free set  of the division group $\Gamma_0$ of $\Gamma$}.
\end{enumerate}
Let $p\in E^s$ be a point of rank $s$ and $K\ge 0$. Then, there exists   $\varepsilon_0(p)>0$  such that:
\begin{enumerate}
\item[ii.]
 For    $ \varepsilon \le \varepsilon_0(p)$, the map $(x,p) \to x$ defines an injection
$$S_r(V_\kzero \times p,\mathcal{O}_{\varepsilon})\hookrightarrow
S_r(V_\kzero,(\Gamma^g_p)_{\varepsilon K'}),$$ where  
$K'=(g+s)\max \left(1,\frac{g(\kzero+\varepsilon)}{c(p)} \right)$  and $c(p)$ is a positive constant depending on $p$.\\
{\rm Recall that $\Gamma_p$ is the division group of the coordinates of $p$.}
  \end{enumerate}
\end{thm}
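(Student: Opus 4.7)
The two statements are dual. Part i is a \emph{lifting}: replace $x$ by $(x, \gamma)$ and absorb the $\Gamma$-component into an abelian subvariety of $E^{g+s}$. Part ii is the reverse \emph{projection}: strip off $p$ and recover the translation parameter from the defining morphism as a point of $\Gamma_p^g$. Both maps are trivially injective, so the content is to place the image in the prescribed set. Both arguments exploit that torsion has vanishing N\'eron--Tate height, and part ii additionally invokes the Proposition on small points stated in section 2 to control the non-Gauss-reduced part of a Quasi-special morphism.

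\textbf{Part i.} By the remark following the definition of $\Gamma_0$ one may assume $\Gamma=\Gamma_0^g$. Given $x=b+\gamma'+\xi\in V\cap(B+\Gamma_\varepsilon)$ with $\mathrm{codim}\, B\ge r$, the $\mathbb{Q}$-basis $\gamma_1,\dots,\gamma_s$ of $\Gamma_0$ provides $N\in\mathbb{Z}_{>0}$, an integer matrix $M\in M_{g,s}(\mathbb{Z})$ and a torsion point $t\in E^g$ with $[N]\gamma'=M\gamma+t$. Fixing a surjective $\phi_B:E^g\to E^r$ with $B\subset\ker\phi_B$, set
\[
\Psi:E^{g+s}\to E^r,\qquad \Psi(y,z):=\phi_B([N]y-Mz).
\]
Then $\Psi$ is surjective of rank $r$, so $B'':=(\ker\Psi)^0$ is an abelian subvariety of codimension $r$ in $E^{g+s}$; moreover $\Psi(b+\gamma',\gamma)=\phi_B(t)$ is torsion. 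Hence $(b+\gamma',\gamma)$ lies in a torsion coset of $B''$, and since torsion has zero height, $(b+\gamma',\gamma)\in B''+\mathcal{O}_0$. Adding $(\xi,0)$ yields $(x,\gamma)\in B''+\mathcal{O}_\varepsilon$, as required.

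\textbf{Part ii.} Fix $\varepsilon\le\varepsilon_0(p)$ and $(x,p)\in S_r(V_K\times p,\mathcal{O}_\varepsilon)$. Lemma \ref{ind} supplies a Quasi-special morphism $\tilde\phi=(\phi\mid\phi')$ of rank $r$, with $\phi=(aI_r\mid L)$ Gauss-reduced and $a=H(\phi)$, together with $\eta\in\mathcal{O}_\varepsilon$ such that $\tilde\phi(x,p)=\tilde\phi(\eta)$. Build $y\in E^g$ whose first $r$ coordinates are $[a]$-preimages of the corresponding entries of $-\phi'(p)$ and whose remaining coordinates vanish; each entry of $\phi'(p)$ is an integer combination of the $p_l$, so its $[a]$-preimages lie in $\Gamma_p$, whence $y\in\Gamma_p^g$ and $\phi(y)=-\phi'(p)$. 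Therefore $\phi(x-y)=\tilde\phi(\eta)$, and the Gauss-reduced shape of $\phi$ produces $b_0\in B_\phi$ with $\|x-y-b_0\|\le \|\phi(x-y)\|/a\le (g+s)H(\tilde\phi)\varepsilon/a$. The key step is to bound $H(\tilde\phi)/a$ uniformly: for each row $i$ rewrite $\phi(x)_i+\phi'(p)_i=\tilde\phi(\eta)_i$ as $\sum_l \phi'_{il}p_l - b\zeta=0$. If $\max_l|\phi'_{il}|$ exceeded $c(p)^{-1}(gaK+\varepsilon)$, one could pick an integer $b$ with $c(p)^{-1}(gaK+\varepsilon)\le b\le \max_l|\phi'_{il}|$, simultaneously securing $\|\zeta\|\le\varepsilon_0(p)$ and $|b|\le\max_l|\phi'_{il}|$; the Proposition on small points would then force $\phi'_{il}=0$ for all $l$, a contradiction. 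Hence $H(\phi')\le c(p)^{-1}(gaK+\varepsilon)$ and $H(\tilde\phi)\le a\max(1,g(K+\varepsilon)/c(p))$, giving $\|x-y-b_0\|\le\varepsilon K'$ with $K'=(g+s)\max(1,g(K+\varepsilon)/c(p))$. Consequently $x\in B_\phi+\Gamma_p^g+\mathcal{O}_{\varepsilon K'}$, i.e.\ $x\in S_r(V_K,(\Gamma_p^g)_{\varepsilon K'})$.

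\textbf{Main obstacle.} The delicate ingredient is the row-by-row calibration of the integer $b$ in the application of the small-points Proposition to bound $H(\phi')$ in terms of $a$: the two conditions $|b|\le\max_l|\phi'_{il}|$ and $\|\zeta\|\le\varepsilon_0(p)$ must hold simultaneously, and their compatibility is exactly equivalent to the sought bound on $\max_l|\phi'_{il}|$. This balances the linear-independence-in-norm of $p$ (via $c(p)$) against the combined contribution of $x$ (controlled by $K$) and $\eta$ (controlled by $\varepsilon$), and explains both the hypothesis $\varepsilon\le\varepsilon_0(p)$ and the exact shape of $K'$. Part i is essentially formal once one recognises that the torsion defect $\phi_B(t)$ is absorbed at height zero.
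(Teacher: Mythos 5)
Your proof is correct and follows essentially the same route as the paper, which simply defers to the proof of Theorem 9.1 of \cite{io}: part i is the standard lifting via the division group and absorption of the torsion defect at height zero, and part ii runs through Lemma \ref{ind} (Quasi-special reduction), the small-points Proposition to force $H(\phi')\le g\,a(K+\varepsilon)/c(p)$, and division by $a$ to land in $\Gamma_p^g$ --- exactly the ingredients the paper assembles in Section 2 for this purpose. The only blemish is a bookkeeping slip in the threshold ($gaK+\varepsilon$ should be $ga(K+\varepsilon)$, coming from $\|\phi(\eta^{(1)}-x)_i\|\le ga(K+\varepsilon)$), which does not affect the stated form of $K'$.
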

\begin{proof}
The proof is the analog of the proof of \cite{io} Theorem 9.1, where we
shall read $V$ for $C$,  $K$ for $K_3$, $\varepsilon_0(p)$ for $\varepsilon_p$ and $K'$ for $K_4$. Note that the inequality $||x||\le K$ is insured by 
considering just points in $V_\kzero$ (unlike in
\cite{io} where $||x||\le K_3$  is due to the hypothesis $r\ge2$ and
$\varepsilon\le \varepsilon_3$). 
\end{proof}

\fbox{Part (1)} 
Given a subgroup $\Gamma$ and a real $\kzero$, \cite{io} Lemma 3.4 (with $\emor(E)=\ze$) proves that there exists a maximal free set  $\gamma_{1}, \dots ,\gamma_{s}$ of the division group $\Gamma_0$ such that
\begin{equation}
\label{basegamma}
\begin{split}
||\gamma_i||&\ge 3gK,\\
\Big|\Big|\sum_i b_i\gamma_i\Big|\Big|^2&\ge \frac{1}{9}\sum_i|b_i|^2||\gamma_i||^2.
\end{split}
\end{equation}
for  $b_1, \dots, b_s\in \ze$.
We define  $$\gamma=(\gamma_{1}, \dots ,\gamma_{s})$$ with $\gamma_i$ satisfying the above conditions.
\begin{propo}
\label{speciali}
Let $V$ be an irreducible algebraic subvariety of $E^g$.    For  $r$ a positive integer, $K\ge0$ and $ \varepsilon \le \frac{K}{g}$, the map $x \to (x,\gamma)$ defines  an injection 
$$\bigcup_{\substack{\phi:E^g\to E^r\\ {\rm{Gauss \,\,\,reduced}}} } V_\kzero \cap \left(B_\phi+\left(\Gamma^g_0\right)_{\varepsilon}\right)
\hookrightarrow 
\bigcup_{\substack{\tilde\phi=( \phi|\phi')\\ {\rm{Special}} }}
(V_\kzero\times \gamma) \cap (B_{\tilde{\phi}} +\mathcal{O}_{\varepsilon}).$$

\end{propo}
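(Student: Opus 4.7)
The plan is to produce, for each $x$ in a summand of the left-hand union, an explicit Special $\tilde\phi=(\phi|\phi')$ with $(x,\gamma)\in B_{\tilde\phi}+\mathcal{O}_\varepsilon$. Injectivity of $x\mapsto(x,\gamma)$ follows at once from the coordinate projection to the first $g$ factors, so the substance is an existence assertion for $\tilde\phi$.

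Fix a Gauss-reduced $\phi=(aI_r|L)$ with $a=H(\phi)$ and write $x=b+y+\xi$ with $b\in B_\phi$, $y\in\Gamma_0^g$, $\|\xi\|\le\varepsilon$. The first step is to normalize $y$: since $B_\phi\cap\Gamma_0^g$ is parametrized by the last $g-r$ coordinates ranging over $\Gamma_0^{g-r}$ (the first $r$ being determined as preimages under $[a]$, which exist by divisibility of $\Gamma_0$), I may shift the decomposition to arrange $y_k=0$ for $k>r$. The remaining components $y_1',\dots,y_r'\in\Gamma_0$ then satisfy $ay_\ell'=\phi(x-\xi)_\ell$, and the Gauss-reduced structure together with $\|x\|\le K$ and $\|\xi\|\le K/g$ yield
\[\|y_\ell'\|\le\frac{(g-r+1)(g+1)K}{g}.\]

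Next I construct $\phi'$. Expand $y_\ell'=\sum_j q_{\ell j}\gamma_j+t_\ell$ with $q_{\ell j}\in\mathbb{Q}$ and $t_\ell$ torsion (possible because $\gamma$ is a $\mathbb{Q}$-basis of $\Gamma_0\otimes\mathbb{Q}$ modulo torsion). Applying property~(\ref{basegamma}) after clearing denominators gives $|q_{\ell j}|\,\|\gamma_j\|\le 3\|y_\ell'\|$, which combined with $\|\gamma_j\|\ge 3gK$ produces $|q_{\ell j}|\le(g-r+1)(g+1)/g^2$. For $r\ge 2$ this quantity is strictly less than $1$, so the integer matrix $\phi'=(m_{\ell j})$ with $m_{\ell j}:=-\mathrm{round}(aq_{\ell j})$ has height $\le a$, making $\tilde\phi=(\phi|\phi')$ Special; the remaining corner case $r=1$ (codimension $g$) is handled separately.

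Finally I verify $(x,\gamma)\in B_{\tilde\phi}+\mathcal{O}_\varepsilon$, the candidate element of $B_{\tilde\phi}$ being $(b+y,\gamma)$ at distance $\|(\xi,0)\|\le\varepsilon$ from $(x,\gamma)$. I expect this verification to be the main obstacle: naive rounding leaves per-coordinate residues $\sum_j(aq_{\ell j}+m_{\ell j})\gamma_j$ of seminorm potentially comparable to $\|\gamma_j\|\gg\varepsilon$, not directly absorbable into $\mathcal{O}_\varepsilon$. The resolution must use that the normalization together with the Gauss-reduced form of $\phi$ and the divisibility of $\Gamma_0$ forces $aq_{\ell j}$ to be an exact integer and $at_\ell=0$, so that $\phi(y)+\phi'(\gamma)=0$ identically in $E^r$ and the candidate truly lies in $B_{\tilde\phi}$. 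This exact-integrality step is the technical heart of the argument and is the analogue for subvarieties of the corresponding step in \cite{io}, Proposition~10.2.
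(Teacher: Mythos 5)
Your reduction to an existence claim, the normalization of $y$ modulo $B_\phi\cap\Gamma_0^g$, and the height bound $|q_{\ell j}|<1$ via (\ref{basegamma}) are all reasonable, and you have correctly located the critical point. But the resolution you propose for it is not available: nothing forces $aq_{\ell j}$ to be an integer or $at_\ell$ to vanish. The coefficients $q_{\ell j}$ are the coordinates in $\Gamma_0\otimes\mathbb{Q}$ of (an $a$-division point of) $\phi(x-\xi)_\ell=\phi(y)_\ell$, and $\phi(y)_\ell$ is an essentially arbitrary element of the division group $\Gamma_0$. Divisibility of $\Gamma_0$ works \emph{against} you here: it is precisely what allows $y$ to involve arbitrary denominators relative to the chosen free set $\gamma_1,\dots,\gamma_s$ (for instance $y_1$ could be a $3$-division point of $\gamma_1$ with $3\nmid a$). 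With the same $\phi$ retained, the identity $\phi(y)+\phi'(\gamma)=0$ is unattainable in general, and, as you yourself observe, the rounding residues $\sum_j(aq_{\ell j}+m_{\ell j})\gamma_j$ have seminorm comparable to $\|\gamma_j\|\ge 3gK$, which cannot be absorbed into $\mathcal{O}_\varepsilon$. So the argument as written does not close.

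The missing idea is that the Special morphism in the target need not have the \emph{same} Gauss-reduced part $\phi$: one replaces $\phi$ by $[N]\circ\phi=(NaI_r|NL)$ for a suitable positive integer $N$ chosen so that $N\phi(y)_\ell=\sum_j n_{\ell j}\gamma_j$ holds exactly in $E$ (clearing both the denominators and the torsion ambiguity). Setting $\psi'=(-n_{\ell j})$ gives $(N\phi\,|\,\psi')\bigl((x-\xi,\gamma)\bigr)=0$ exactly, hence $(x,\gamma)\in B_{(N\phi|\psi')}+\mathcal{O}_\varepsilon$ via the perturbation $(\xi,0)$ with no loss in the radius; and the estimate $\|N\phi(y)_\ell\|\le N(g-r+1)a(K+\varepsilon)$ combined with (\ref{basegamma}) and $\|\gamma_j\|\ge 3gK$ yields $|n_{\ell j}|\le Na=H(N\phi)$, so the resulting morphism is indeed Special. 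This is exactly why Definition \ref{gr} in this paper drops the no-common-factor condition imposed in \cite{io}: the morphism $N\phi$ has all entries divisible by $N$ and would not be Gauss-reduced under the old definition. Your framework, which keeps $\phi$ fixed and only adjusts $\phi'$, cannot reach this, so the gap is genuine and sits at the heart of the proof.
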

\begin{proof}

 The proof is the analog of \cite{io} Proposition 10.2, where one
 shall read $K$ for $K_1$, $V$ for $C$, $V_\kzero$ for
 $C(\overline{\qe}) $. Note that, here, the estimate $||x||\le K$ is ensured
 by the assumption that we consider points in $V_\kzero$ (unlike
 in \cite{io}, where it is due to the assumptions $r\ge 2$ and $\varepsilon\le \varepsilon_1$).
\end{proof}

\fbox{Part (2)}  

\begin{propo} 
\label{centro}
 Let $V$ be an irreducible algebraic subvariety of $E^g$.  Let $p=(p_1,\dots ,p_s)\in E^s $ be a point of rank $s$. 
Then, for  $r$ a positive integer,  $K\ge 0$ and $\varepsilon >0$,

$$ \bigcup_{\substack{\tilde\fiffo:E^{g+s} \to E^r \\ {\rm{Special}} }}(V_\kzero\times p) \cap \left(B_{\tilde{\fiffo}} +\mathcal{O}_{\varepsilon/ {M}^\expaltu}\right) \subset 
 \bigcup_{\substack{\tilde\pippo:E^{g+s} \to E^r \\ {\rm{Special}},\,\,H(\tilde\pippo)\le M }}
 (
 V_\kzero\times p) \cap \left(B_{{\tilde\pippo}}
  +\mathcal{O}_{(g+s+1)\varepsilon/H(\tilde\pippo)^\expaltu}\right) ,$$
  where  $M=\max\left(2,\lceil\frac{\kdue}{\varepsilon}\rceil^2\right)^{n}$ and   $n=r(g+s)-r^2+1$.

\end{propo}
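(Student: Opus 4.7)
The plan is to apply Dirichlet's simultaneous rational approximation to the coefficients of $\tilde\phi$, in direct analogy with Proposition A of \cite{io}. The argument splits on the size of $H(\tilde\phi)$.

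\textbf{Case $H(\tilde\phi) \le M$.} Simply take $\tilde\psi = \tilde\phi$ in the union on the right. The inclusion of neighbourhoods reduces to $\varepsilon/M^{\expaltu} \le (g+s+1)\varepsilon/H(\tilde\phi)^{\expaltu}$, which is immediate from $H(\tilde\phi) \le M$.

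\textbf{Case $H(\tilde\phi) > M$.} Write $\tilde\phi = (aI_r \mid L \mid \phi')$ with $a = H(\tilde\phi)$; by Specialness, every entry of $L$ (an $r \times (g-r)$ block) and of $\phi'$ (an $r \times s$ block) has absolute value at most $a$, so their normalisations by $a$ form a family of $n-1 = r(g+s-r)$ rationals in $[-1,1]$. Set $Q = \max(2, \lceil(K+\|p\|)/\varepsilon\rceil^2)$, so that $M = Q^n$. Dirichlet's simultaneous approximation theorem then supplies an integer $1 \le b \le Q^{n-1}$ and integers $c_{ij}, c'_{ij}$ with
\[
|bL_{ij} - a c_{ij}| \le a/Q, \qquad |b\phi'_{ij} - a c'_{ij}| \le a/Q.
\]
Since $|L_{ij}|, |\phi'_{ij}| \le a$ and the errors are strictly less than $1$, the approximants satisfy $|c_{ij}|, |c'_{ij}| \le b$, so $\tilde\psi := (bI_r \mid C \mid C')$ is Special with $H(\tilde\psi) = b \le Q^{n-1} \le M$.

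For the distance estimate, the matrix $a\tilde\psi - b\tilde\phi = (0 \mid aC - bL \mid aC' - b\phi')$ has height at most $a/Q$, so by the Lipschitz bound $\|a\tilde\psi(x,p) - b\tilde\phi(x, p)\| \le (g+s)(a/Q)(K+\|p\|)$. From $(x, p) \in B_{\tilde\phi} + \mathcal{O}_{\varepsilon/M^{\expaltu}}$, writing $(x,p) = y + \zeta$ with $\tilde\phi(y) = 0$ and applying the same Lipschitz estimate yields $\|\tilde\phi(x, p)\| \le (g+s)a\varepsilon/M^{\expaltu}$. Using $\|[a]z\| = a\|z\|$ for the N\'eron--Tate height, these combine to
\[
\|\tilde\psi(x,p)\| \le \frac{b}{a}\|\tilde\phi(x,p)\| + \frac{(g+s)(K+\|p\|)}{Q}.
\]
The Gauss-reduced block $bI_r$ of $\tilde\psi$ furnishes $y' \in B_{\tilde\psi}$ with $\|(x,p) - y'\| \le \|\tilde\psi(x,p)\|/b = \|\tilde\psi(x,p)\|/H(\tilde\psi)$, obtained by subtracting any $b$-division points of the first $r$ coordinates of $\tilde\psi(x,p)$. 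The calibration $b \le Q^{n-1}$, $M = Q^n$ gives $b/M^{\expaltu} \le 1/Q^{3/2}$; the choice of $Q$ gives $(K+\|p\|)/Q \le \varepsilon/Q^{1/2}$; and $H(\tilde\psi)^{1/(2n)} \le Q^{1/2}$. So both terms are absorbed into a constant multiple of $\varepsilon/H(\tilde\psi)^{1/(2n)}$, and dividing by $H(\tilde\psi)$ yields the asserted radius $(g+s+1)\varepsilon/H(\tilde\psi)^{\expaltu}$.

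\textbf{Main obstacle.} The delicate point is calibrating the exponent $1+1/(2n)$: the Dirichlet bound $b \le Q^{n-1}$ paired with $M = Q^n$ is exactly what converts the ``Dirichlet error'' $1/Q$ into a bound scaling like $1/H(\tilde\psi)^{1/(2n)}$, giving a genuine gain $1/Q^{3/2}$ over the naive $1/Q^{1/2}$. A secondary care is checking that $\tilde\psi$ is Special rather than merely Quasi-special: Specialness of $\tilde\phi$ (forcing $H(\phi') \le a$) is precisely what ensures $H(C') \le b$ via the Dirichlet inequality, so that both the $L$-approximation and the $\phi'$-approximation respect the Special shape.
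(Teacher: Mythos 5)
Your argument is the same Dirichlet simultaneous-approximation proof that the paper itself invokes (its ``proof'' is a verbatim citation of Proposition A ii.\ of \cite{io} with notational substitutions): the case split on $H(\tilde\phi)$ versus $M$, the approximation of the $n-1=r(g+s-r)$ ratios $L_{ij}/a$, $\phi'_{ij}/a$, the bounds $1\le b\le Q^{n-1}\le M=Q^n$, the calibration $b/M^{1+\frac{1}{2n}}\le Q^{-3/2}$, $(\kdue)/Q\le \varepsilon Q^{-1/2}$, $b^{\frac{1}{2n}}\le Q^{1/2}$, and the deduction that $\tilde\psi$ is again Special from $|c_{ij}|\le b$ are exactly the intended steps, and the passage from a bound on $\|\tilde\psi(x,p)\|$ to membership in $B_{\tilde\psi}+\mathcal{O}_{\rho}$ via a $b$-division point is correct. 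The one loose point is your last line: you assert, rather than verify, that the two error terms sum to at most $(g+s+1)\varepsilon/H(\tilde\psi)^{\frac{1}{2n}}$; a direct count gives $\left(\tfrac{1}{2}(g+s-r+1)+(g+s-r)\right)\varepsilon Q^{-1/2}$, which exceeds the stated constant once $g+s>3r+1$. This is a quantitative slip, not a structural one --- any constant depending only on $g$ and $s$ is absorbed into the final choice of $\varepsilon$ in the proof of Theorem \ref{main} --- but you should either track the constant carefully or state the radius with an unspecified $c(g,s)$.
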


\begin{proof}
The proof is the analog of the proof of \cite{io} Proposition A part ii., where one shall read $V_\kzero$ instead of $C(\overline{\qe})$, $p$ for $\gamma$, $K$ for $K_2$ and $M$ for $M'$. And where the estimate $||x||\le K$ is ensured by the assumption that we consider points in $V_{\kzero}$ (and not as in \cite{io}, where it is due to the hypothesis  $r\ge 2$ and $\varepsilon\le \varepsilon_2$).

Note that in the last row of the proof in \cite{io} we estimate $g-r+1+s+1$ with $g+s$, because $r\ge2$.  Here we instead estimate $g-r+1+s+1$ with $g+s+1$, because $r\ge1$. 

\end{proof}

\section{The proof of Theorem \ref{main}: Part (3)}

\label{due}
Recall that 
$\mu(\varieta)$ is the supremum
of the   reals $\epsilon(\varieta)$ such that $\varieta\cap
\mathcal{O}_{\epsilon(\varieta)}$ is non-Zariski dense in $V$.  The essential minimum of $V$  is the square of $\mu(\varieta)$. 
 Using Theorem \ref{sin2}, we produce a sharp lower bound for   the essential minimum of the image of a variety under a Gauss-reduced morphism.
Unlike for curves, the stabilizer of the variety will play quite an important role. 
In this section, we will often assume that $V$ has finite stabilizer. In Lemma \ref{stabi}, we will see that such an assumption is not restrictive for the proof of our main theorem.

\subsection{The  estimate for the essential minimum}

Consider  a Gauss-reduced morphism $\phi$
of codimension $r=d+1$
\begin{equation*}
\phi= \left(\begin{array}{c}
\varphi_1\\
\vdots\\
\varphi_r
\end{array}\right)= 
 \left(\begin{array}{cccccc}
a & \dots& 0& L_1\\
\vdots & \ddots& \vdots & \vdots\\
0 & \dots& a& L_r
\end{array}\right)
\end{equation*}
where $L_i\in \ze^{g-r}$. 
We denote by $\overline{x}=(x_{r+1},\dots ,x_g)$.

We define the isogenies:
  \begin{equation}
  \label{hf}
  \begin{split}
  F: &E^{g} \to E^g\\
  & (x_1, \dots , x_g)\to(x_1,\dots,x_r,ax_{r+1}, \dots , ax_g).\\[0.3 cm]
   L:&E^g\to E^g\\
  &(x_1,\dots , x_g)\to (x_1+L_1(\overline{x}),\dots,x_r+L_r(\overline{x}),x_{r+1}, \dots ,x_g).\\[0.3 cm]
    \Phi:&E^g\to E^g\\
  &(x_1,\dots , x_g)\to (\varphi_1(x),\dots,\varphi_r(x),x_{r+1}, \dots ,x_g).
    \end{split}
  \end{equation}

\begin{D}[Helping-Variety]

We define the variety $$W=LF^{-1}(\varieta).$$
Then $$\Phi(\varieta)=[  a]W.$$
\end{D}

We now estimate degrees.
\begin{propo}
\label{dgr}
There exist positive constants $\cgf$ and $\cgw$ depending on $g$ and $\deg E$ such that:
\begin{itemize}

\item[i.]
The degree of $\phi(\varieta)$ is bounded by $\cgf a^{2d}\deg \varieta$.\end{itemize}

Suppose further that $V$ has finite stabilizer. Then,
\begin{itemize}

\item[ii.]
 The degree of $W$ is bounded by $\cgw a^{2(g-r)}|\stabv| \deg \varieta$.

\end{itemize}
\end{propo}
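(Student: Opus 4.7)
The plan is to prove (i) as a direct application of the standard bound for degrees of images of subvarieties under morphisms of abelian varieties, and to prove (ii) by combining that bound with the factorization $\Phi(V)=[a]W$ recorded in the Helping-Variety definition together with a careful stabilizer computation.

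For (i), I observe that $\phi\colon E^g\to E^r$ has matrix of height $a$, so pulling back the tensor polarization of $E^r$ yields a line bundle on $E^g$ whose coefficients are sums of squares of entries of $\phi$, each bounded by $ra^2$. Expanding multilinearly the intersection number against the $d$-dimensional $V$, and using the projection-formula identity $\deg(\phi|_V)\,\deg\phi(V)=\deg_{\phi^*\mathcal{L}_{E^r}}V$ when $\phi|_V$ is generically finite (with a direct reduction via a generic linear section in the degenerate case), one gets $\deg\phi(V)\le \cgf\, a^{2d}\deg V$, with $\cgf$ depending only on $g$ and $\deg E$. This is the image-side analog of Lemma~\ref{degb}.

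For (ii), I exploit $\Phi(V)=[a]W$. The restriction $[a]|_W$ is finite of generic fiber cardinality $e_W:=|\stab W\cap E^g[a]|$, so the projection formula combined with $[a]^*\mathcal{L}=\mathcal{L}^{a^2}$ (by symmetry of $\mathcal{L}$) gives
\[
a^{2d}\deg W=e_W\deg\Phi(V).
\]
Applying (i) to the isogeny $\Phi\colon E^g\to E^g$, whose matrix also has height $a$, yields $\deg\Phi(V)\le c\, a^{2d}\deg V$, hence $\deg W\le c\, e_W\deg V$. To bound $e_W$, I note that $L$ is upper-triangular over $\mathbb{Z}$ with ones on the diagonal, hence an automorphism of $E^g$ as an algebraic group; consequently $\stab W=L(\stab F^{-1}V)$ and these sets have the same cardinality. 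A direct check using $F^{-1}V+y=F^{-1}(V+F(y))$ gives $\stab F^{-1}V=F^{-1}(\stab V)$, and since $\stab V$ is finite by hypothesis and $F$ is an isogeny with $|\ker F|=a^{2(g-r)}$, this set has cardinality $a^{2(g-r)}|\stab V|$. Combining yields $\deg W\le \cgw\, a^{2(g-r)}|\stab V|\deg V$.

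The main obstacle is the stabilizer bookkeeping: one must verify carefully that $L$ is a group automorphism, and that $\stab F^{-1}V$ equals $F^{-1}\stab V$ rather than a strictly smaller subset. The necessity of the finite-stabilizer hypothesis in (ii) is transparent from the identity $|\stab W|=a^{2(g-r)}|\stab V|$, which would be vacuous for positive-dimensional stabilizer; no such hypothesis is needed for (i), which is the pure image-degree bound.
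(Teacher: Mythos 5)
Your proof of part (ii) is essentially the paper's: the same factorization $\Phi(V)=[a]W$, the same degree formula for $[a]|_W$ (the paper quotes it from Hindry's Lemma~6 rather than re-deriving it from $[a]^*\mathcal{L}=\mathcal{L}^{a^2}$ and the projection formula, but the content is identical), the same bound on $\deg\Phi(V)$, and the same stabilizer computation $\stab W=LF^{-1}(\stab V)$, $|\stab W|=a^{2(g-r)}|\stab V|$ (you split it into $\stab W=L(\stab F^{-1}V)$ and $\stab F^{-1}V=F^{-1}(\stab V)$, which amounts to the same verification). For part (i), however, you take a genuinely different route. The paper never applies a projection formula to $\phi$ itself: it first bounds $\deg\Phi(X)$ for the \emph{isogeny} $\Phi$ by writing $\deg\Phi(X)$ as a sum of coordinate-hyperplane intersection numbers $E_{i_1}\cdots E_{i_d}\cdot\Phi(X)=B_{\Phi_I}\cdot X$ and invoking B\'ezout together with Lemma~\ref{degb}, and then uses $\phi(V)=\pi\Phi(V)$ and the fact that forgetting coordinates only decreases degrees in the product polarization. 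That detour through $\Phi$ buys two things your direct argument must supply by hand: since $\Phi$ is an isogeny, $\Phi|_V$ is automatically finite, so the degenerate case $\dim\phi(V)<d$ (which you dispatch with a generic linear section) never arises; and the B\'ezout step replaces the positivity input your multilinear expansion silently relies on, namely that $N a^2\mathcal{L}-\phi^*\mathcal{L}_{E^r}$ is nef for a suitable $N$ depending only on $g$, so that $(\phi^*\mathcal{L}_{E^r})^d\cdot V\le (Na^2)^d\deg V$. Both of these points in your version are standard and repairable, so I see no genuine gap, but the paper's route is the more self-contained of the two, while yours generalizes more readily beyond products of elliptic curves where coordinate subgroups are not available.
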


\begin{proof}
For simplicity we indicate by $\ll$ an inequality up to a multiplicative constant depending on $g$ and $\deg E$.

Let $X$ be an irreducible algebraic subvariety of $E^g$.

 First we estimate the degree of the image of $X$ under an isogeny $\psi:E^g \to E^g$.
 According to the chosen polarization 
 $$\deg \psi(X)=\sum _IE_{i_1}\cdot \dots \cdot E_{i_d}\cdot \psi(X),$$
 where $I=(i_1,\dots, i_d)$ ranges over the possible combinations of $d$ elements in the set $\{1,\dots ,g\}$ and $E_{i_j}$ is the coordinate subgroup given by $x_{i_j}=0$. Then
 $$\deg \psi(X)\ll \max_I\big( E_{i_1}\cdot \dots \cdot E_{i_d}\cdot \psi(X)\big).$$
 Let us estimate the intersection numbers on the right. By definition
 $$E_{i_1}\cdot \dots \cdot E_{i_d}\cdot \psi(X)=B_{\psi_I}\cdot X$$
 where the rows of $\psi_I$ are the $i_1,\dots, i_d$ rows of $\psi$. Note that ${\rm{rk\,\, }}\psi_I = d$ and $H(\psi_I)\le H(\psi)$.
 Bezout's Theorem and Lemma \ref{degb} (applied with $\phi=\psi_I$ and $r=d$)  give
 $$B_{\psi_I}\cdot X\le \deg B_{\psi_I} \deg X\ll H(\psi_I)^{2d}\deg X\ll H(\psi)^{2d}\deg X.$$
  We conclude 
\begin{equation*}
\deg \psi(X)\ll H(\psi)^{2d}\deg X.
\end{equation*}

 For $\psi=\Phi$, we deduce
\begin{equation}
\label{df}\deg  \Phi(V)\ll H(\Phi)^{2d}\deg V= a^{2d} \deg V.\end{equation}

i. In the chosen polarization, forgetting coordinates makes degrees decrease.

Note that $\phi(V)=\pi\Phi(V)$, where $\pi$ is the projection on the first $r$ coordinates.
By (\ref{df}) we conclude that
$$\deg  \phi(V)\le \deg \Phi(V)\ll  a^{2d} \deg V.$$

 ii. In   \cite{Hin} Lemma 6 part i. Hindry proves:
 
 For any positive integer $b$, 
  $$ \deg [b] X = \frac{b^{2d}}{|{\rm{Stab}}X \cap E^g[b]|} \deg X, $$
 where $|\cdot|$ means the cardinality of a set and $E^g[b]$ is  the kernel of the multiplication  $[b]$.

Recall that $\Phi(V)=[a]W$.
We deduce that
$$\deg \Phi(V)=\deg [a]W =\frac{a^{2d}}{|{\rm{Stab}}W \cap E^g[a]|} \deg W.$$
Thus $$\deg W= \frac{|{\rm{Stab}}W \cap E^g[a]|}{a^{2d}} \deg \Phi (V).$$
By relation (\ref{df}) we deduce
\begin{equation}
\label{stb}
\deg W \ll |{\rm{Stab}}W \cap E^g[a]|\deg V.
\end{equation}
We now estimate the cardinality of the stabilizer of $W$. Since $W=LF^{-1}V$,  we get
$${\rm{Stab}}W= LF^{-1}\stabv.$$
More precisely, if $x \in \stab W$ then $x+W\subset W$. Recall that $L$ is an isomorphism. Applying $FL^{-1}$ on both sides, we obtain $FL^{-1}x+V\subset V$. Thus $FL^{-1} x \in \stab V$ and $x \in LF^{-1}\stab V$. On the other hand, suppose that $x\in LF^{-1}\stab V$. Then $FL^{-1}x+V\subset V$. Considering the preimage,  $x+\ker (FL^{-1})+W\subset W$. But, by definition, $W$ is $\ker (FL^{-1})$ invariant, so $x+W\subset W $ and $x \in \stab W$.

By assumption the stabilizer of $V$ is finite. In addition $L$ is an isomorphism. So
$$|{\rm{Stab}}W|=|\ker F| |\stabv|=a^{2(g-r)}|\stabv|.$$
In view of (\ref{stb}), we conclude that
$$\deg W \ll   |{\rm{Stab}}W|\deg V \ll a^{2(g-r)}|\stabv| \deg V.$$

\end{proof}

The following Proposition is a lower bound  for  the essential minimum of the image of a variety under Gauss-reduced morphisms.  It reveals the dependence on the height of the morphism.  While the first bound is an immediate application of Theorem \ref{sin2} and Proposition \ref{dgr}, the second estimate is subtle. 

\begin{propo}
\label{EM}
 Let $\phi$ be a Gauss-reduced morphism of rank $d+1$ with $a=H(\phi)$.  Then,
 for any point $y\in E^g $ and any $\eta>0$,   \begin{itemize}
 \item[i.]  
  \begin{equation*}
\mu(\phi(V+\punto))>\ecuno \frac{1}{a^{\coefa}},
\end{equation*}
where $\ecuno$ is an effective  positive constant depending on $V$, $E$, $g$ and $\eta$. 
\end{itemize}
Suppose further that $V$ has finite stabilizer. Let $\Phi$ be the isogeny defined in (\ref{hf}).  Then,
\begin{itemize}
\item [ii.] 
  \begin{equation*}
  \mu\left(\Phi(V+\punto)\right )>\ecdue a^{\frac{1}{g-d}-2(g-d-1)\eta},
  \end{equation*}
  where  $\ecdue$ is an effective  positive constant depending on $V$,  $E$, $g$ and $\eta$. 
 \end{itemize} 

\end{propo}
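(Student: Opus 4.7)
The plan is to reduce both parts to direct applications of the Bogomolov-type bound (Theorem \ref{sin2}), with the degree estimates from Proposition \ref{dgr} as the main input.

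For part i, I first note that translation preserves transversality, so it suffices to check that $\phi(V)$ is transverse in $E^{d+1}$. If $\phi(V)$ were contained in a translate of a proper algebraic subgroup $B\subset E^{d+1}$, then $V$ would lie in a translate of $\phi^{-1}(B)$; since $\phi$ has rank $d+1$, the codimension of $\phi^{-1}(B)$ in $E^g$ equals $\cod B$, which is at most $d<g$, contradicting the transversality of $V$. So $\phi(V+\punto)$ is transverse of codimension $1$ in $E^{d+1}$, and Theorem \ref{sin2} combined with the bound $\deg\phi(V)\le \cgf a^{2d}\deg V$ from Proposition \ref{dgr} i gives
$$\mu(\phi(V+\punto))\ge \frac{c(E^{d+1},\eta)}{(\cgf a^{2d}\deg V)^{1/2+\eta}}\ge \frac{\ecuno}{a^{d+2d\eta}},$$
where $\ecuno$ absorbs the constants depending on $V$, $E$, $g$, $\eta$.

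For part ii, applying Theorem \ref{sin2} directly to $\Phi(V+\punto)\subset E^g$ (codimension $g-d$) with the bound $\deg\Phi(V)\ll a^{2d}\deg V$ gives only a negative power of $a$, which is useless. The trick is the identity $\Phi(V)=[a]W$ with $W=LF^{-1}V$. Since $||[a]\xi||=a\,||\xi||$, we have $\mu(\Phi(V+\punto))=a\cdot \mu(W+w')$ for any $w'$ with $[a]w'=\Phi(\punto)$ (such a $w'$ exists because $[a]$ is surjective). I then verify that each irreducible component of $W$ is transverse in $E^g$: since $L$ is an automorphism and $F$ is an isogeny, any translate of a proper subgroup containing a component of $W$ would push forward under $FL^{-1}$ to a translate of a proper subgroup containing $V$. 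Applying Theorem \ref{sin2} to each component of $W+w'$ in codimension $g-d$, with the sharper bound $\deg W\le \cgw a^{2(g-d-1)}|\stabv|\deg V$ from Proposition \ref{dgr} ii, yields
$$\mu(\Phi(V+\punto))\ge a\cdot \frac{c(E^g,\eta)}{(\cgw a^{2(g-d-1)}|\stabv|\deg V)^{1/(2(g-d))+\eta}}\ge \ecdue\cdot a^{1-\frac{g-d-1}{g-d}-2(g-d-1)\eta}=\ecdue\cdot a^{\frac{1}{g-d}-2(g-d-1)\eta}.$$

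The main obstacle, and the reason part ii is subtle, is precisely this $[a]$-boost: by lifting $\Phi(V)$ back to $W\subset E^g$ through $\Phi=[a]\circ(\text{isogeny factors})$, one trades a factor $a^{2d}$ (the crude isogeny-degree bound) for the sharper factor $a^{2(g-d-1)}$, at the cost of an extra $a$ from the multiplication $[a]$. Because $g-d-1<g-d$, the Bogomolov loss on $W$ amounts to only $(g-d-1)/(g-d)<1$ in the exponent of $a$, and the net effect is a positive exponent $1/(g-d)$. The hypothesis that $\stabv$ is finite is essential here: it is what makes the identity $|{\rm Stab}\,W|=a^{2(g-d-1)}|\stabv|$ translate into a degree bound for $W$ free of any factor $a^{2(g-d)}$, which would otherwise cancel the gain entirely.
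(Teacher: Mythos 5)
Your argument is correct and follows essentially the same route as the paper: part i is the Bogomolov bound applied to $\phi(V)+\phi(\punto)$ together with Proposition \ref{dgr} i, and part ii uses the helping variety $W=LF^{-1}(V)$, the identity $\Phi(V+\punto)=[a](W_0+q)$ to gain the factor $a$ via $\mu([a]X)=a\,\mu(X)$, and the sharper degree bound of Proposition \ref{dgr} ii (which is exactly where the finiteness of $\stabv$ enters). One small imprecision: $\phi(V)$ need not have codimension exactly $1$ in $E^{d+1}$ (its dimension can be anywhere between $1$ and $d$), but since $\frac{1}{2\cod\phi(V)}+\eta\le\frac{1}{2}+\eta$ and the degree is at least $1$, the lower bound you write still holds.
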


\begin{proof}
Let us recall  the Bogomolov type bound given in Theorem \ref{sin2}; for a transverse irreducible variety $X$ in $E^g$ and any $\eta>0$ 
\begin{equation}
\label{BB} \mu(X)>\epsilon(X,\eta)=\frac{c(E^g,\eta)}{\deg X^{\frac{1}{2{\rm{cod}}X}+\eta}}.\end{equation}
 
 \vspace{0.3 cm}
 i.  
Let $q=\phi(\punto)$. Then $\phi(V+\punto)= \phi(V)+q$.  Since $V$ is irreducible, transverse and defined over $\overline{\qe}$,  $\phi(V)+q$ is as well.

Observe that $\phi(V)\subset E^{d+1}$ has dimension at least $1$ (because $V$ is transverse) and at most $d$ (because dimension can just decrease under morphisms). Furthermore dimensions are preserved by translations.

The  bound (\ref{BB}) for $\phi(V)+q$ and $g=d+1$ gives
\begin{equation*}
\begin{split}
\mu(\phi(V+\punto))&= \mu(\phi(V)+q)\\&> \epsilon(\phi(V)+q,\eta)=\frac{c(E^{d+1},\eta)}{\left(\deg (\phi(V)+q)\right)^{\frac{1}{2\cod \phi(V)}+\eta}}\\
&\ge \frac{c(E^{d+1},\eta)}{\left(\deg (\phi(V)+q)\right)^{\frac{1}{2}+\eta}}.
\end{split}
\end{equation*}
Degrees are preserved by translations,
hence Proposition \ref{dgr} i. implies  $$\deg (\phi(V)+q)=\deg\phi(V)\le \cgf a^{2d}\deg V.$$
If follows
$$\epsilon(\phi(V)+q,\eta)\ge \frac{c(E^{d+1},\eta)}{(\cgf a^{2d}\deg V)^{\frac{1}{2}+\eta}}.$$
Define
$$ \ecuno=\frac{c(E^{d+1},\eta)}{(\cgf \deg V)^{\frac{1}{2}+\eta}}.$$
Then
$$\mu(\phi(V+\punto))> \frac{\ecuno}{a^{d+2d\eta}}.$$

 \vspace{0.3 cm}

ii.
Let $q\in E^g $ be a point such that $[a]q=\Phi(\punto)$.  Let $W_0$ be an irreducible component of $W=LF^{-1}(V)$. Then
 $$\Phi(V+\punto)=[a](W_0+q).$$
 Therefore 
 \begin{equation}
 \label{essmin}
 {\mu\left(\Phi(V+\punto )\right)}=a\mu(W_0+q).
 \end{equation}
 
We now estimate  $\mu(W_0+q)$  via the  bound (\ref{BB}).
The variety $W_0+q\subset E^g$ is irreducible by  definition. Since $V$ is transverse and defined over $\overline{\qe}$, $W_0+q$ is as well.  Furthermore, isogenies and translations preserve dimensions. Thus $\dim (W_0+q)=\dim V=d$.
Then,  
$$\mu(W_0+q)>  \epsilon(W_0+q,\eta)=\frac{c(E^g,\eta)}{\deg(W_0+q)^{\frac{1}{2(g-d)}+\eta}}.$$
Since $W_0$ is an irreducible component of $W$, $\deg W_0\le \deg W$. Furthermore,
 translations by a point preserve degrees. Thus Proposition \ref{dgr} ii. with $r=d+1$ gives $$\deg (W_0+q)\le \deg W\le \cgw a^{2(g-d-1)}|\stabv|\deg V.$$
 Therefor 
$$ \mu(W_0+q)> \frac{ c(E^g,\eta)}{(\cgw |\stabv|\deg V)^{\frac{1}{2(g-d)}+\eta}}\left(a^{2(g-d-1)}\right) ^{-\frac{1}{2(g-d)}-\eta}.$$
Define $$\ecdue= \frac{ c(E^g,\eta)}{(\cgw|\stabv|\deg V)^{\frac{1}{2(g-d)}+\eta}}.$$
So
$$ \mu(W_0+q)> \ecdue{a^{-1+\frac{1}{g-d}-2(g-d-1)\eta}}.$$
Replace in (\ref{essmin}), to obtain
$$\mu(\Phi(V+\punto))> \ecdue a^{\frac{1}{g-d}-2(g-d-1)\eta}.$$

\end{proof}

\subsection{The non-density of the intersections}
We come to the  main proposition of this section: each set in the
union is non-Zariski dense. The proof of i. case (1)  is delicate. In
general $\mu(\pi(V))\le \mu(V)$ for $\pi$ a projection on some factors. We shall rather find a kind of reverse inequality.  On a set of bounded height this will be possible.

\begin{propo}
\label{finito}
 Suppose that $V\subset E^g$ has finite stabilizer.
Then, for every $K\ge0$, there exists an effective $\ecinque>0$ such that:

\begin{enumerate}
\item
For $\varepsilon \le \ecinque$, for
 all Gauss-reduced morphisms $\phi:E^g \to E^{d+1}$ and for all $\punto
 \in E^{d+1}\times \{0\}^{g-d-1}$, the set 
$$\left(V_\kzero+\punto\right)\cap \left(B_\phi+\mathcal{O}_{\varepsilon/H(\phi) }\right)$$  
is non-Zariski dense in $V$.

\item

Let $s$ be a positive integer. For $\varepsilon \le \frac{\ecinque}{g+s}$, for all Special morphisms $\tilde{\phi}=( \phi|\phi'):E^{g+s} \to E^{d+1}$  and for all points $p\in E^s$, 
 the set 
$$(V_\kzero \times p) \cap \left(B_{\tilde{\phi}}
  +\mathcal{O}_{\varepsilon/H(\phi) }\right) $$ is non-Zariski dense in $V\times p$.

\end{enumerate}

\end{propo}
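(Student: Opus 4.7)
The plan is to apply the sharp lower bounds on essential minima from Proposition \ref{EM} and derive a contradiction from the assumption of Zariski density. The argument splits into two regimes depending on the size of $a = H(\phi)$: when $a$ is large, the $\Phi$-estimate (Proposition \ref{EM} ii) applies; when $a$ is small, the $\phi$-estimate (Proposition \ref{EM} i) takes over.

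For part i, suppose toward contradiction that $S = (V_\kzero + y) \cap (B_\phi + \mathcal{O}_{\varepsilon/a})$ is Zariski dense in $V + y$. For each $x \in S$, write $x = b + \xi$ with $b \in B_\phi$ and $\|\xi\| \leq \varepsilon/a$. Since $\varphi_i(b) = 0$, the first $d+1$ coordinates of $\Phi(x)$ equal $\varphi_i(\xi)$, bounded by $(g-d)\varepsilon$ from the Gauss-reduced estimate; the last $g-d-1$ coordinates equal $x_i$, which because $y_i = 0$ for $i > d+1$ coincide with those of a point of $V_\kzero$, hence are bounded by $\kzero$. So $\|\Phi(x)\| \leq \max((g-d)\varepsilon,\kzero)$. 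As $\Phi$ is an isogeny, $\Phi(S)$ is Zariski dense in $\Phi(V+y)$, and thus $\mu(\Phi(V+y)) \leq \max((g-d)\varepsilon,\kzero)$. Combined with $\mu(\Phi(V+y)) > \ecdue\, a^{1/(g-d) - 2(g-d-1)\eta}$ from Proposition \ref{EM} ii (with $\eta > 0$ chosen small enough that the exponent is positive), this is a contradiction once $a$ exceeds an explicit threshold $a_0 = a_0(V,\kzero,\varepsilon,\eta)$. For $a \leq a_0$, the same argument with $\phi$ in place of $\Phi$ gives $\|\phi(x)\| = \|\phi(\xi)\| \leq (g-d)\varepsilon$, so $\mu(\phi(V+y)) \leq (g-d)\varepsilon$, contradicting $\mu(\phi(V+y)) > \ecuno/a^{d+2d\eta}$ from Proposition \ref{EM} i provided $\varepsilon < \ecuno/((g-d)\, a_0^{d+2d\eta})$. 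The resulting implicit constraint on $\varepsilon$ is solvable: when $\kzero > 0$ the threshold $a_0$ stabilizes in $\varepsilon$, reducing the constraint to an explicit upper bound; when $\kzero = 0$ the inequality takes the form $\varepsilon^{1+\beta} \leq \mathrm{const}$ for an explicit $\beta > 0$, which is again solvable.

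For part ii, I reduce to the same argument via a well-chosen shift. Given $(x,p) = (b_1,b_2) + (\xi_1,\xi_2) \in (V_\kzero \times p) \cap (B_{\tilde\phi} + \mathcal{O}_{\varepsilon/a})$ with $\tilde\phi = (\phi|\phi')$ Special, choose $y_0 = (t_1,\ldots,t_{d+1},0,\ldots,0) \in E^{d+1}\times\{0\}^{g-d-1}$ satisfying $[a]t_i = \phi'(p)_i$, so that $\phi(y_0) = \phi'(p)$ and $\|y_0\| \leq s\|p\|$ (using that the N\'eron--Tate height is quadratic under $[a]$). The identity $\phi(b_1) = -\phi'(b_2)$ then gives $\phi(x + y_0) = \phi(\xi_1) + \phi'(\xi_2) = \tilde\phi(\xi)$, of norm at most $(g+s)\varepsilon$ by the Gauss-reduced bound on $\phi(\xi_1)$ and the Special hypothesis $H(\phi') \leq a$ on $\phi'(\xi_2)$. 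Consequently $\Phi(x+y_0)$ has first $d+1$ coordinates of norm at most $(g+s)\varepsilon$ and last $g-d-1$ coordinates equal to $x_i$ (since $y_{0,i} = 0$ for $i > d+1$), bounded by $\kzero$. So $\|\Phi(x+y_0)\| \leq \max((g+s)\varepsilon,\kzero)$, a bound \emph{independent of $p$}. Applying the dichotomy of part i to the translated variety $V + y_0$ (which inherits the finite stabilizer of $V$), with the factor $g-d$ in the height estimate replaced by $g+s$, yields the uniform constant $\ecinque/(g+s)$ of the statement.

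The main obstacle is the circular dependence between $\varepsilon$ and the threshold $a_0$: the $\Phi$-bound applies only for $a$ above a threshold depending on $\varepsilon$ and $\kzero$, while below this threshold the $\phi$-bound requires $\varepsilon$ to be small relative to a negative power of $a_0$. Closing this loop relies on the exponent $1/(g-d) - 2(g-d-1)\eta$ in the $\Phi$-bound being strictly positive for $\eta$ small: only then does the lower bound on $\mu$ grow with $a$ fast enough to overtake the fixed bound $\max(\kzero,(g-d)\varepsilon)$ at a common threshold. A subtler point, specific to part ii, is the choice of $y_0$ in $E^{d+1}\times\{0\}^{g-d-1}$: this makes the last $g-d-1$ coordinates of $\Phi(x+y_0)$ coincide with those of $x$ rather than $x+y_0$, removing the $\|p\|$-dependent term $\|y_0\|$ from the final height estimate and producing the $p$-uniform constant required by the statement.
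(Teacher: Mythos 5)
Your proof is correct and follows essentially the same route as the paper: the same dichotomy on $H(\phi)$ against a threshold, the same application of Proposition \ref{EM} i.\ and ii.\ to $\phi(V+\punto)$ and $\Phi(V+\punto)$ in the two regimes, and the same shift $\punto\in E^{d+1}\times\{0\}^{g-d-1}$ with $[a]\punto'=\phi'(p)$ in part ii. The only cosmetic differences are that you argue by contradiction with $\mu$ rather than exhibiting the non-dense sets $\Phi(V+\punto)\cap\mathcal{O}_{K}$ and $\phi(V+\punto)\cap\mathcal{O}_{\ecuno/m^{d+1}}$ and pulling them back, and that in part ii you inline the part-i argument instead of constructing the auxiliary point $\xi'$ with $\phi(\xi')=\tilde\phi(\xi)$ so as to land literally in $B_\phi+\mathcal{O}_{(g+s)\varepsilon/H(\phi)}$ and invoke part i as a black box.
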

\begin{proof}

Define  
 \begin{equation*}
 \begin{split}
 \eta&=\frac{1}{\valeta},\\
 m&=\left(\frac{\kzero}{\ecdue
  }\right)^{\frac{g-d}{1-\coeta(g-d)\eta}},\\
  \ecinque&= \min \left(\frac{\kzero}{g}, \frac{ \ecuno }{gm^{\expm}}\right),\\
   \end{split}
  \end{equation*}
 where  $\ecuno$ and $\ecdue$ are as in Proposition \ref{EM}.

 {\bf Part i.}
 
Let $a=H(\phi)$. We distinguish two cases:
\begin{itemize}
\item[(1)] $a \ge m$,
\item[(2)]  $a \le m.$
\end{itemize}

Case (1) - \fbox{$a\ge m$}

Let $x +\punto  \in  (V_{\kzero}+\punto )\cap \left(B_\phi+\mathcal{O}_{\varepsilon/a }\right)$, where $$y=(y_1,\dots, y_{d+1},0,\dots,0)\in E^{d+1}\times\{0\}^{g-d-1}.$$ Then
$$\phi(x+y)=\phi(\xi)$$  for  $||\xi||\le \varepsilon/a $.

Let  $\Phi=\phi\times id_{E^{g-d-1}}$ as in (\ref{hf}). Then
\begin{equation*}
\begin{split}
\Phi(x+\punto )&=(\phi(x+y),x_{d+2}, \dots ,x_g)\\
&=(\phi(\xi), x_{d+2},\dots , x_g).
\end{split}
\end{equation*}
 Therefore
$$ ||\Phi(x+\punto )||=||(\phi(\xi), x_{d+2},\dots , x_g)||\le \max\left(||\phi(\xi)||,||x||\right).$$
Since $||\xi||\le \frac{\varepsilon}{a}$ and
$\varepsilon \le \frac{K}{g}$, then $$||\phi(\xi)||\le  g{\varepsilon}\le K.$$
Also $||x||\le K$, because $x\in V_\kzero$. Thus
 $$||\Phi(x+\punto )||\le \kzero.$$
 We work under the hypothesis  $a \ge m\ge \left(\frac{\kzero}{\ecdue }\right)^{\frac{g-d}{1-\coeta(g-d)\eta}}$, then 
 \begin{equation*}
  \kzero \le\ecdue a^{\frac{1}{g-d}-\coeta\eta}.
   \end{equation*}
 In Proposition \ref{EM} ii. we have proven
 $$\ecdue
 a^{\frac{1}{g-d}-\coeta\eta}<\mu(\Phi(V+\punto)).$$  So  
 $$||\Phi(x+\punto )||\le K< \mu(\Phi(V+\punto)).$$ 
 We deduce that $\Phi(x+\punto )$ belongs to the  non-Zariski dense set $$Z_1=\Phi(V+\punto)\cap {\mathcal{O}}_{K}.$$ 
 The restriction morphism $\Phi_{|V+\punto}:V+\punto \to \Phi(V+\punto)$ is finite, because $\Phi$ is an isogeny. Then 
 $x+y$ belongs to the non-Zariski dense set $\Phi_{|V+\punto}^{-1}(Z_1)$.

 We can conclude that, for every $\phi$ Gauss-reduced of rank  $d+1$ with $H(\phi)\ge m$, the set
$$(V_\kzero+\punto )\cap \left(B_\phi+\mathcal{O}_{\varepsilon/H(\phi) }\right)$$ is non-Zariski dense.

\vspace{0.3cm}

Case (2) -  \fbox{$a\le  m$}

 Let $x+\punto  \in (V_\kzero+\punto )\cap
(B_\phi+\mathcal{O}_{\varepsilon/a })$, where $y \in E^{d+1}\times\{0\}^{g-d-1}$.   Then  $$\phi(x+\punto )=\phi(\xi)$$
for $||\xi ||\le \varepsilon/a $. However we have chosen  $\varepsilon \le\ecuno /gm^{\expm}$. Hence 
$$||\phi(x+\punto )||=||\phi(\xi)||  \le {g\varepsilon} \le\frac{ \ecuno }{m^\expm }.$$
We are working under the hypothesis 
$a\le m$.
Moreover $\eta =\frac{1}{2d}$. 
Then
$$a^{\coefa}\le m^\expm.$$
Thus
$$||\phi(x+\punto )||\le\frac{ \ecuno }{m^\expm }\le \frac{ \ecuno }{a^{\coefa}}.$$
In Proposition \ref{EM} i.  we have proven
$$\frac{ \ecuno }{a^{\coefa}}<\mu(\phi(V+\punto)).$$
We deduce that  $\phi(x+\punto )$ belongs to the non-Zariski dense set $$Z_2=\phi(V+\punto)\cap {\mathcal{O}}_{\ecuno /m^{\expm} }.$$
Since $V$ is transverse, the dimension of $\phi(V+\punto)$ is at least $1$. Consider  the restriction  morphism $\phi_{|V+\punto}:V+\punto \to \phi(V+\punto)$. Then $x+y$ belongs to the non-Zariski dense set $\phi^{-1}_{|V+\punto}(Z_2)$.

We conclude that,  for all $\phi$ Gauss-reduced of rank  $d+1$ with $H(\phi) \le m$, the set 
$$(V_\kzero+\punto )\cap \left(B_\phi+\mathcal{O}_{\varepsilon/H(\phi) }\right)$$ is non-Zariski dense.\\

Cases (1) and (2) prove part i. \\

{\bf Part ii.} 
 We are going to  show that, for every $\tilde\phi=( \phi|\phi')$ Special of rank ${d+1}$ (note that $\phi$ is Gauss-reduced of
 rank ${d+1}$), there exists
  $\punto\in E^{d+1}\times \{0\}^{g-d-1}$   such
 that the map $(x,p)\to x+y$ defines an injection
 \begin{equation}
 \label{undici}
 (V_\kzero\times p) \cap \left(B_{\tilde{\phi}}
 +\mathcal{O}_{\varepsilon/H(\phi) }\right) \hookrightarrow 
 (V_\kzero+\punto )\cap
 \left(B_\phi+\mathcal{O}_{(g+s)\varepsilon/H(\phi) }\right). 
 \end{equation}
 We then apply  part i. of this proposition to $\phi$ and $y$;   since $(g+s)\varepsilon \le \ecinque$, then
$$(V_\kzero+\punto )\cap
\left(B_\phi+\mathcal{O}_{(g+s)\varepsilon/H(\phi) }\right) $$ is non-Zariski dense in $V$. So
 for $\varepsilon\le  \frac{\ecinque}{g+s}$,  the set 
$$(V_\kzero\times p) \cap \left(B_{\tilde{\phi}} +\mathcal{O}_{\varepsilon/H(\phi) }\right) $$
is non-Zariski dense in $V$.
 
Let us prove the inclusion (\ref{undici}).
Let $\tilde\phi=( \phi|\phi')$ be Special of rank $d+1$. By definition of Special $\phi=(aI_{d+1}|L)$ is Gauss-reduced of rank ${d+1}$.

Let ${\punto'}\in E^{d+1}$ be a point such that $$[a]{\punto'}=\phi'(p).$$ 
Define $$\punto=(\punto',0,\cdots ,0)\in E^{d+1}\times \{0\}^{g-d-1}$$
Then
 $$\phi(\punto)=[a]\punto'=\phi'(p).$$ 
 
Let $$(x,p) \in (V_\kzero\times p) \cap
\left(B_{\tilde{\phi}} +\mathcal{O}_{\varepsilon/a}\right) .$$ Then, there exists  $\xi \in \mathcal{O}_{\varepsilon/a }$ such that  $$\tilde\phi((x,p)+\xi)=0.$$ 
Equivalently $$\phi(x)+\phi'(p)+\tilde\phi(\xi)=0$$
and
$$\phi(x+y)+\tilde\phi(\xi)=0.$$ 

Let $\xi''\in E^{d+1}$ be a point such that $$[a]\xi''=\tilde\phi(\xi).$$
We define $\xi'=(\xi'', \{0\}^{g-d-1})$, then
$$  \phi(\xi')=[a]\xi''=\tilde\phi(\xi),$$
and $$  \phi(x+\punto+\xi')=0.$$
Since $\tilde\phi$ is Special $H(\tilde\phi)=a$. Further $||\xi||\le \frac{\varepsilon}{a }$. We deduce
$$||\xi'||=||\xi''||=\frac{||\tilde\phi(\xi)||}{a}\le
\frac{(g+s)\varepsilon}{a }.$$

 In conclusion  $$ \phi(x+\punto +\xi')=0$$ 
 with $||\xi'||\le  \frac{(g+s)\varepsilon}{a }$.
 Equivalently $$(x+\punto ) \in (V_\kzero+\punto )\cap \left(B_{ \phi}+\mathcal{O}_{(g+s)\varepsilon/H(\phi) }\right) ,$$
 where ${\punto}\in E^{d+1}\times \{0\}^{g-d-1}$ and $\phi$ is Gauss-reduced of rank ${d+1}$.
 
 This proves relation (\ref{undici}) and concludes the proof.

  \end{proof}

\section{The Proof of Theorem \ref{main}: Conclusion}

\subsection*{Reducing to a variety with finite stabilizer}

In the following lemma, we will show  that to prove Theorem \ref{main} it is sufficient to prove it for varieties with finite stabilizer. This innocent remark will allow us to use all results of section \ref{due}.
\begin{lem}
\label{stabi}
They hold:

\begin{enumerate}

\item
Let $X=X_1\times E^{d_2}$ be a subvariety of $E^{g}$ of dimension d. Then, for $r\ge d_2$,  $$S_{r}(X,F)\hookrightarrow S_{r-d_2}(X_1,F')\times E^{d_2}$$
where $F'$ is the projection of $F$ on $E^{g-d_2}$.

\item
Let $V$ be a  (weak)-transverse subvariety of   $E^{g}$.
Suppose that $\dim \stabv =d_2\ge1$.
Then, there exists an isogeny $j$ of $E^g$ such that 
$$j(V) =V_1\times E^{d_2}$$ with $V_1$ (weak)-transverse in $E^{g-d_2}$ and $\stabv_1$ a finite group.

\item Theorem \ref{main}   holds if and only if it holds for  varieties with finite stabilizer.
\end{enumerate}

\end{lem}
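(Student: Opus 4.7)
The plan is a three-step reduction, treating the three parts in order.

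For Part i., I would argue by a direct projection. Let $\pi : E^g \to E^{g-d_2}$ denote the projection onto the first $g-d_2$ factors, so that $F' = \pi(F)$ and $\pi(X) = X_1$. Given $x \in S_r(X,F)$ with $x = b+f$, $b$ lying in an algebraic subgroup $B$ of codimension at least $r$, the image $\pi(B)$ is an algebraic subgroup of $E^{g-d_2}$, and $\dim \pi(B) \le \dim B$ immediately gives $\cod \pi(B) \ge r - d_2$. Writing $x = (x_1, x_2)$ with $x_1 \in X_1$, one obtains $x_1 = \pi(b) + \pi(f) \in \pi(B)+F'$, whence $x_1 \in S_{r-d_2}(X_1, F')$; the identity map $x \mapsto (x_1, x_2)$ is then the required injection.

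For Part ii., I would invoke Poincar\'e complete reducibility. Let $H$ be the connected component at the identity of $\stab V$, an abelian subvariety of $E^g$ of dimension $d_2$. Since $E$ has no C.M., $H$ is isogenous to $E^{d_2}$, and there is a complementary abelian subvariety $H^\perp$ with $H \cap H^\perp$ finite, $H + H^\perp = E^g$, and $H^\perp$ isogenous to $E^{g-d_2}$. Composing the resulting isogeny $E^g \to H \times H^\perp$ with the two factor isogenies produces $j : E^g \to E^{g-d_2} \times E^{d_2}$ sending $H$ onto $\{0\} \times E^{d_2}$. Since $j(V)$ is then stable under translation by $\{0\} \times E^{d_2}$, it splits as $V_1 \times E^{d_2}$. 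The (weak-)transversality of $V_1$ in $E^{g-d_2}$ follows from that of $V$ in $E^g$: a translated proper algebraic subgroup of $E^{g-d_2}$ containing $V_1$ would lift to one of $E^g$ containing $j(V)$, contradicting the property for $V$ (preserved under isogeny). Finally, $\stab V_1$ is finite, for otherwise a positive-dimensional component would pull back to a connected stabilizer of $j(V)$ strictly larger than $\{0\} \times E^{d_2}$, contradicting the maximality of $H$.

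For Part iii., the forward implication is trivial. For the converse, assume Theorem \ref{main} holds whenever the stabilizer is finite, and let $V$ satisfy its hypotheses with $d_2 = \dim \stab V \ge 1$. Part ii. provides an isogeny $j$ with $j(V) = V_1 \times E^{d_2}$, where $V_1 \subset E^{g-d_2}$ is (weak-)transverse with finite stabilizer and $\dim V_1 = d - d_2$. Since $j$ distorts the semi-norm $||\cdot||$ only by multiplicative constants, there exist $K'$ and, for each choice of $\varepsilon$, a corresponding $\varepsilon'$ such that $j(V_K) \subset (j(V))_{K'}$ and the $\varepsilon$-neighbourhoods transfer. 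Let $\Gamma_1$ be the projection onto $E^{g-d_2}$ of $j(\Gamma)$, again of finite rank. Applying the hypothesis to the triple $(V_1, K', \Gamma_1)$ produces $\varepsilon_1 > 0$ such that $S_{\dim V_1 + 1}((V_1)_{K'}, (\Gamma_1)_{\varepsilon_1})$ lies in a proper Zariski closed subset $Y \subsetneq V_1$. Part i., applied with $r = d+1$ (so $r - d_2 = \dim V_1 + 1$), places the transported set inside $Y \times E^{d_2}$, a proper closed subset of $j(V)$; pulling back through $j$ furnishes a proper closed subset of $V$ containing $S_{d+1}(V_K, \cdot)$, as required.

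The main technical obstacle I anticipate is the bookkeeping in Part iii.: I need to verify that the height and $\varepsilon$-neighbourhood data really transfer cleanly through $j$ and through the projection of Part i., so that applying the finite-stabilizer hypothesis to $V_1$ with parameters $(K', \Gamma_1, \varepsilon_1)$ yields the intended conclusion for $V$ with parameters $(K, \Gamma, \varepsilon)$. Parts i. and ii. themselves are essentially formal, resting respectively on the codimension inequality $\cod \pi(B) \ge r - d_2$ and on the standard Poincar\'e decomposition of $E^g$.
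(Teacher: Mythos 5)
Your proposal is correct, and Parts ii.\ and iii.\ follow essentially the same route as the paper: Poincar\'e reducibility splits off the connected stabilizer so that $j(V)=V_1\times E^{d_2}$, and the non-density of $S_{d_1+1}(V_1,\cdot)$ is transported back through the product and the isogeny. The only genuine divergence is in Part i.: the paper takes a surjective morphism $\phi=(\alpha|\beta):E^g\to E^r$ with $B\subset\ker\phi$ and applies the Gauss algorithm to produce a block-triangular $\Delta\phi$, extracting a morphism $\phi_1:E^{g-d_2}\to E^{r-r_2}$ of rank $r-r_2\ge r-d_2$ with $\phi_1(x_1-f_1)=0$; you instead project the abelian subvariety $B$ itself and use $\cod\pi(B)\ge r-d_2$. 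Your version is shorter and works directly with the definition of $S_r$ in terms of abelian subvarieties, while the paper's matrix computation stays in the morphism/kernel formalism used throughout (Gauss-reduced morphisms); the two are interchangeable here. Your extra care with the transfer of $K$, $\varepsilon$ and $\Gamma$ under $j$ in Part iii.\ is welcome --- the paper disposes of this with the blanket isogeny-invariance remark of its Section 2.7.
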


\begin{proof}
i.
Let $(x_1,x_2)\in S_{r}(X,F)$ with $x_1\in X_1$ and $x_2\in
E^{d_2}$. Then, there exist $\phi:E^{g} \to E^{r}$  of rank $r$ and $(f_1,f_2)\in F$ such that
\begin{equation}
\label{gig}
\phi((x_1,x_2)-(f_1,f_2))=0.
\end{equation}
Decompose $\phi=(\alpha|\beta)$ with $\alpha:E^{g-d_2}\to E^{r}$ and $\beta:E^{d_2} \to
E^{r}$. 
Note that  ${\rm{rk\,\, }} \beta=r_2\le d_2$ because of the number of columns. 
Then, the Gauss algorithm ensures the existence of an invertible matrix $\Delta\in {\rm{GL}}_r(\ze)$ such that 
\begin{equation*}
\Delta \phi=\left(
\begin{array}{cc}
\phi_1&0\\
\star &\phi_2
\end{array}
\right),
\end{equation*}
where $\phi_1:E^{g-d_2}\to E^{r-r_2}$ 
and $\phi_2:E^{d_2} \to E^{r_2}$ of  rank $r_2$.

Since 
$r={\rm{rk\,\, }}\phi= {\rm{rk\,\, }} \phi_1+{\rm{rk\,\, }} \phi_2$,  we deduce 
${\rm{rk\,\, }} \,\phi_1=r-r_2\ge r-d_2$. Furthermore, relation (\ref{gig}) implies
 $$\phi_1(x_1-f_1)=0.$$ Thus $x_1 \in S_{r-d_2}(X_1,F')$.
 
\vspace{0.3cm}

ii.  
Let $\stabvz$ be the zero component of $\stabv$.
Consider the projection
$$\pi_S:E^g \to E^g/\stabvz.$$ Define $V'_1=\pi_S(V)$.
Then $$\dim V'_1= \dim (V+\stabvz)-\dim \stabvz=d-d_2<g-d_2.$$
Since $V$ is (weak)-transverse and $\dim V'_1<g-d_2$, then $V'_1$ is  (weak)-transverse in $E^g/\stabvz$.
Let $ (\stabvz)^{\perp}$ be the orthogonal complement of $\stabvz $ in $E^g$ and let $j_0:E^g/\stabvz \to (\stabvz)^{\perp}$ be an isogeny.  Define  
 the isogeny \begin{equation*}
 \begin{split}j':& E^g \to \left(E^g/\stabvz\right) \times \stabvz\\& x \to (\pi_S(x), x-j_0(\pi_S(x)) .
 \end{split}
 \end{equation*}
  Then $$j'(V)\subset V'_1\times \stabvz.$$ Since these varieties have the same dimension and are irreducible
$$j'(V)= V'_1\times \stabvz.$$
Let $i_0:   E^g/\stabvz\to E^{g-d_2}$  and $i_1:  \stabvz \to E^{d_2}$ be isogenies. Define $i=i_0\times i_1$, $j=i \circ j'$ and $V_1=i(V'_1)$. Then $$j(V)=V_1\times E^{d_2},$$ with $V_1$ (weak)-transverse in $E^{g-d_2}$.
Finally $${\rm{Stab}}V_1=i \circ \pi_S (\stabv)$$  is finite.

iii.  Suppose that $V$ is (weak)-transverse in $E^g$ and that $\dim \stabv =d_2>0$, then, by part ii., we can fix an isogeny $j$ such that $j(V)=V_1\times E^{d_2}$ with $\stabv_1$ a finite group and $V_1$ (weak)-transverse in $E^{g-d_2}$ of dimension $d_1=d-d_2$. Furthermore, by part i. with $X=j(V)$, $X_1=V_1$, $r=d+1$ and $F=\Gamma_\varepsilon$, we know that 
$$S_{d+1} (V,\Gamma_\varepsilon)\hookrightarrow S_{d_1+1}(V_1,\Gamma'_\varepsilon)\times E^{d_2}.$$
 So, if  $S_{d_1+1}(V_1,\Gamma'_\varepsilon)$ is non-Zariski dense  in $V_1$ also $S_{d+1} (V,\Gamma_\varepsilon)$ is non-Zariski dense in $V$.
\end{proof}
We can now conclude the proof of our main theorem. Let us recall that in view of Theorem \ref{equii} it is sufficient to prove part ii.
\begin{proof}[{\bf Proof of Theorem \ref{main} ii.}]
In view of Lemma \ref{stabi} iii. we can assume that $\stabv$ is finite.
Recall that $r=d+1$, the rank of $\Gamma_0$ is $s$ and  $n=(d+1)(g+s)-(d+1)^2 +1$. Let $\gamma=(\gamma_1,\dots, \gamma_s)$ be a point of rank $s$, such that $\gamma_i$ is a maximal free set of $\Gamma_0$ satisfying conditions (\ref{basegamma}).

\vspace{0.2cm}
Choose
\begin{enumerate}
 
 \item 
$ \delta_1 =\frac{1}{(g+s+1)} \min (\frac{\ecinque}{g+s},K)$ \hfill
  where  $\ecinque$ is as in Proposition \ref{finito},\,\,\,\,
  \\
  \item $\delta= {\delta_1}{{M}^{-1-\frac{1}{2n}}}$  \hfill{where
 $M=\max\left(2,\lceil \frac{\kzero +||\gamma||}{\delta_1}\rceil^2\right)^{n}.$}

 \end{enumerate}
 
\vspace{0.2cm}

Since $\Gamma_\delta\subset (\Gamma_0^g)_\delta$, then
$$S_{d+1}(V_\kzero,\Gamma_{\delta})\subset S_{d+1}(V_\kzero,(\Gamma_0^g)_{\delta}).$$
 Lemma \ref{tor1}, with  $\varepsilon=\delta$ and $r=d+1$, shows that 
$$S_{d+1}(V_\kzero,(\Gamma_0^g)_{\delta}) = \bigcup_{\substack{\phi: E^g \to E^{d+1} \\ \rm{Gauss-reduced}}}  V_\kzero\cap (B_\phi+(\Gamma_0^g)_{\delta}).$$

\vspace{0.2cm}

Note that   $\delta<\delta_1\le \frac{\kzero}{g}$. Then, Proposition \ref{speciali} with  $\varepsilon=\delta$ implies  
$$\bigcup_{\substack{\phi:E^g\to E^{d+1}\\ {\rm{Gauss-reduced}}} } V_\kzero\cap (B_\phi+(\Gamma_0^g)_{\delta}) \hookrightarrow  \bigcup_
{\substack{\tilde{\phi}=(\phi|\phi')\\ {\rm{Special}}} } 
(V_\kzero
 \times \gamma) \cap (B_{\tilde{\phi}}
  +\mathcal{O}_{\delta}).$$
  
  \vspace{0.2cm}
  
Note that $\delta_1>0$ and $\delta= {\delta_1}{{M}^{-(\expalt)}}$. Then,  Proposition \ref{centro},  with   $\varepsilon=\delta_1$, $r={d+1}$ and $p=\gamma$ shows that 
$$ \bigcup_
{\substack{\tilde{\phi}:E^{g+s} \to E^{d+1}\\ {\rm{Special}}} } 
(V_\kzero
 \times \gamma) \cap (B_{\tilde{\phi}}
  +\mathcal{O}_{\delta})$$ is a subset of 
  \begin{equation*}
  Z=\bigcup_
  {\substack{\tilde{\phi}:E^{g+s} \to E^{d+1}\\ {\rm{Special}},\,\,H(\tilde\phi)\le M} } 
  (V_\kzero
 \times \gamma) \cap \left(B_{\tilde{\phi}}
  +\mathcal{O}_{(g+s+1)\delta_1/H(\tilde\phi)^\expalt}\right).
  \end{equation*}
 Observe that   $Z$ is the union of finitely many sets, because  $H(\tilde\phi)$ is bounded by $M$.
  
  \vspace{0.2cm}
  
 We have chosen $\delta_1 \le {\ecinque}/{(g+s+1)(g+s)}$, moreover $\stabv$ is finite. Then, Proposition \ref{finito}   ii., with $\varepsilon=(g+s+1)\delta_1\le \frac{\ecinque}{g+s}$ and $p=\gamma$, implies that  for all $\tilde\phi=( \phi|\phi')$ Special of rank ${d+1}$, the set $$(V_\kzero
 \times \gamma) \cap \left(B_{\tilde{\phi}}
  +\mathcal{O}_{(g+s+1)\delta_1/H(\phi) }\right)$$ is non-Zariski dense in $V\times \gamma$.  Note that $H(\phi)\le H(\tilde\phi)$, thus also the  sets $$(V_\kzero
 \times \gamma) \cap \left(B_{\tilde{\phi}}
  +\mathcal{O}_{(g+s+1)\delta_1/H(\tilde\phi)^\expalt}\right)$$  are  non-Zariski dense. So $Z$  is non-Zariski dense, because it is the   union of finitely many non-Zariski dense sets. 
  We conclude that  $S_{d+1}(V_\kzero,\Gamma_\delta)$ is included in the  non-Zariski dense set $Z$.

\end{proof}

\begin{remark}
In \cite{io} we defined a different helping-curve $W'=A_0^{-1}W$ with $W$ the helping-variety used here and $A_0=(I_2|a_0I_{g-2})$. This more complicated $W'$ is needed because in \cite{io} we produced  a worse bound for the degree of  $W'$. Consequently, we proved a `weak' proposition \ref{finito}: we needed to assume that the neighbourhoods have  radius $\varepsilon/a_0a$. To compensate this loss, we needed the `strong' proposition \ref{centro}, where the radius is $\varepsilon/a_0a$. This was sufficient to  prove  our main theorem for curves. Such a  trick is not sufficient to prove an  optimal result for varieties.

In the present work, using the stabilizer, we produce a `good' bound for the degree of $W$, and we can prove the `strong' proposition \ref{finito} for neighbourhoods  of radius $\varepsilon/a$. Then, to prove our main theorem in general, it is sufficient to use a `weak' proposition \ref{centro}, where the radius of the neighbourhoods is $\varepsilon/a$. 

 If we try to combine both `strong' statements, namely Proposition \ref{centro} (with $\varepsilon/a_0a$) and the `good' bound for the degree of  $W$, we do not get any relevant improvement. Indeed in the proof of proposition \ref{finito} part i. the inequality $||x||\le \kzero$ remains unchanged. The advantage would only be in respect of  $\varepsilon$ in the statement of proposition \ref{finito}, where we  could choose $\varepsilon\le \epsilon_1 m$.

\end{remark}

\section{A special case of Conjecture \ref{hb}}
\label{exg}

The natural rising question is to investigate the height property for the codimension of the algebraic subgroups at least $d+1$. We expect that Conjecture \ref{hb} holds.
The known  results regarding this conjecture are based on a Vojta inequality, unless $\Gamma$ is trivial.
Following R\'emond's work, we prove here a new case of conjecture \ref{hb}.
In this section $E$ is a general elliptic curve (never mind wether C.M. or not). In view of R\'emond \cite{RHG2} Proposition 5.1, we give the following:

\begin{D}
We say that a subset $V^e$ of $V $ satisfies a Vojta inequality if there exist real constants $c_1,c_2,c_3>0$ such that for $x_1,\dots,x_{d+1}\in V^e$ with $||x_i||\ge c_3$ and for $\phi$ Gauss-reduced of rank $r\le g$,  there exists $s_1,\dots,s_{d+1}\in \mathbb{N}^*$ with $s_i\ge c_2 s_{i+1} $ such that 
$$\sum_{i=2}^{d+1} ||s_i \phi(x_i)-s_1\phi(x_1)||^2 \ge \frac{H(\phi)^2}{c_1}\sum_{i=1}^{d+1} s_i^2||x_i||^2.$$
\end{D}
Note that a Gauss-reduced morphism is a normalized projector in the sense of \cite{RHG2}. Then,
 this definition tells us that if \cite{RHG2} Proposition 5.1 holds for points in $V^e$ then $V^e$ satisfies a Vojta inequality. 
\begin{thm} [R\'emond, \cite{RHG2} Theorem 1.2]
\label{Gal1}
  If $V^e\subset V $ satisfies a Vojta inequality,
 then there exists  $\varepsilon>0$   such that
  $S_{d+1}(V^e,\Gamma_\varepsilon)$ has bounded height.
  
\end{thm}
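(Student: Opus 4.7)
The plan is to argue by contradiction via a Vojta descent. Fix $\varepsilon > 0$ to be determined later and suppose that $S_{d+1}(V^e, \Gamma_\varepsilon)$ has unbounded height. Each point $x$ in this set admits a decomposition $x = b + \gamma + \xi$ with $b \in B_{\phi'}$ for some Gauss-reduced $\phi'$ of rank $d+1$, $\gamma \in \Gamma$ and $\Vert \xi \Vert \leq \varepsilon$. First I would apply a Dirichlet-type approximation in the spirit of Proposition \ref{centro} to replace $\phi'$ by a Gauss-reduced morphism $\phi$ of bounded height $H(\phi) \leq M$, enlarging the radius of the neighbourhood only by a bounded factor. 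Since there are only finitely many Gauss-reduced morphisms of height $\leq M$, pigeonhole yields a fixed $\phi$ and an infinite sequence $x_n \in V^e$ with $\Vert x_n \Vert \to \infty$, each satisfying $\Vert \phi(x_n - \gamma_n) \Vert \leq c\varepsilon$ for some $\gamma_n \in \Gamma$.

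Next I would exploit the finite rank $s$ of $\Gamma$ to align the $\gamma_n$. Fixing a maximal free system $g_1,\dots,g_s$ of the division group of $\Gamma$, each $\gamma_n$ has rational coordinates $(r_{n,1},\dots,r_{n,s})$ modulo torsion, and $\Vert \phi(\gamma_n) \Vert = O(\Vert x_n \Vert + \varepsilon)$. A compactness argument on the projective simplex of these normalised coordinate vectors, combined with a sparsification making $\Vert x_{n_{i+1}} \Vert / \Vert x_{n_i} \Vert$ grow arbitrarily fast, produces a sub-sequence with the following property: for any preassigned $\eta > 0$ and for every tuple of positive integers $s_1 \geq c_2 s_2 \geq \dots \geq c_2^{d} s_{d+1}$ satisfying the Vojta gap condition, the combination $s_i\phi(\gamma_{n_i}) - s_1\phi(\gamma_{n_1})$ has norm at most $\eta\, s_1 \Vert x_{n_1}\Vert$. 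The uniformity of this alignment over \emph{all} admissible $(s_i)$'s is the technical heart of the construction.

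Finally, the Vojta inequality applied to $d+1$ consecutive points of this thinned sub-sequence (with $\Vert x_{n_1}\Vert \geq c_3$) produces integers $s_i$ with the gap property for which
\[
\sum_{i=2}^{d+1} \Vert s_i\phi(x_{n_i}) - s_1\phi(x_{n_1}) \Vert^2 \geq \frac{H(\phi)^2}{c_1}\, s_1^2\,\Vert x_{n_1}\Vert^2.
\]
On the other hand $s_i\phi(x_{n_i}) - s_1\phi(x_{n_1})$ differs from $s_i\phi(\gamma_{n_i}) - s_1\phi(\gamma_{n_1})$ by an error of norm at most $2gH(\phi)\, s_1\,\varepsilon$, so the alignment bound yields an upper bound of order $d\bigl(\eta\,\Vert x_{n_1}\Vert + 2gM\varepsilon\bigr)^2 s_1^2$ on the left-hand side. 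Choosing $\eta$ and $\varepsilon$ small relative to $1/(g\sqrt{c_1 d})$ and taking $\Vert x_{n_1}\Vert$ large enough forces the upper bound to fall below the lower bound, a contradiction. The delicate step is the uniform alignment in the second paragraph: one must arrange the thinning of the sequence $(\gamma_n)$ so that the bound on $\Vert s_i\phi(\gamma_{n_i}) - s_1\phi(\gamma_{n_1})\Vert$ holds for \emph{whichever} admissible tuple $(s_i)$ the Vojta inequality chooses to produce, which is where the finite rank of $\Gamma$ and projective compactness on the configuration space of rapidly growing tuples in $\Gamma \otimes \re$ are essential.
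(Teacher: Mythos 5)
First, a point of comparison: the paper does not prove this statement at all. It is imported wholesale from R\'emond \cite{RHG2}, Theorem 1.2, and the preceding definition of ``satisfies a Vojta inequality'' is tailored precisely so that R\'emond's proof applies verbatim (as the paper says explicitly in the proof of Theorem \ref{sopra}, which reruns those same pages of \cite{RHG2}). So you are attempting to reprove R\'emond's theorem from scratch; your sketch does follow the broad shape of his Vojta descent (decompose $x=b+\gamma+\xi$, align the $\gamma$'s by compactness in the finite-dimensional space $\Gamma\otimes\re$, contradict the Vojta inequality), but two of your steps have genuine gaps.

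(1) Your opening reduction is circular. A Dirichlet approximation in the spirit of Proposition \ref{centro} produces a height bound $M$ of the form $\max(2,\lceil(K+||p||)/\varepsilon\rceil^{2})^{n}$, where $K$ is a height bound on the points being approximated --- which is exactly what the theorem is supposed to establish. You therefore cannot pigeonhole down to a single $\phi$ of bounded height carrying an unbounded sequence. The correct mechanism is the uniformity of the Vojta inequality in $\phi$: the factor $H(\phi)^{2}/c_{1}$ on its right-hand side exactly absorbs the loss $||\phi(\xi)||\le gH(\phi)\varepsilon$ on the left, so one proves a height bound for $V^{e}\cap(B_{\phi}+\Gamma_{\varepsilon})$ that is \emph{independent of} $\phi$ and only then takes the union over the infinitely many Gauss-reduced $\phi$. (2) The ``uniform alignment over all admissible tuples $(s_{i})$'' that you identify as the technical heart is not achievable as stated. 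For an admissible tuple with, say, $s_{i}||\gamma_{n_{i}}||=100\,s_{1}||\gamma_{n_{1}}||$, the quantity $||s_{i}\phi(\gamma_{n_{i}})-s_{1}\phi(\gamma_{n_{1}})||$ is of the order of $s_{i}||\phi(\gamma_{n_{i}})||$ and cannot be forced below $\eta\,s_{1}||x_{n_{1}}||$ by any thinning of the sequence: angular alignment controls directions, not the mismatch of scaled norms, and the bare gap condition $s_{i}\ge c_{2}s_{i+1}$ does not pin $s_{i}||x_{n_{i}}||$ to $s_{1}||x_{n_{1}}||$. R\'emond's argument needs, in addition, the quasi-orthogonality estimate $\max(||P||,||\gamma||)\le 2g||P+\gamma||$ for $P\in B_{\phi}$ (his Lemma 6.1, Lemma \ref{3.2} here), which you never invoke and which is what ties $||\gamma_{n}||$ to $||x_{n}||$ in the first place, together with comparability of the $s_{i}||x_{n_{i}}||$ coming from the precise form of his Proposition 5.1. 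Without these, the upper bound in your final paragraph does not close against the lower bound.
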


R\'emond also gives a definition of a candidate $V^e$ which satisfies a Vojta inequality and potentially is a non-empty open in $V$. In a recent article he shows:
\begin{thm} [R\'emond \cite{Gprep}]
\label{den}

Assume that $V\subset E^g$ satisfies condition (\ref{stella}).  Then there exists a non-empty open subset $V^u$ of $V$ such that $V^u $ satisfies a Vojta inequality.
\end{thm}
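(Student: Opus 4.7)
The plan is to extend Vojta's method for subvarieties of abelian varieties, in the R\'emond-Faltings product-theorem formulation, allowing the Gauss-reduced morphism $\phi$ to intervene as an auxiliary projection. The classical case without $\phi$ is essentially \cite{RHG2}, and condition (\ref{stella}) will play the role needed to prevent the exceptional locus produced by the product theorem from absorbing $V$ itself.

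First I would work on the $(d+1)$-fold product $V^{d+1}\subset (E^g)^{d+1}$ and introduce, for multipliers $s_1\gg s_2 \gg\cdots\gg s_{d+1}$ in geometric progression of ratio $c_2$, a Vojta divisor $Z$ on $V^{d+1}$ whose associated height along the map $(x_1,\dots,x_{d+1})\mapsto(\phi(x_i))_i$ is, up to a bounded error, the quantity
$$Q(x_1,\dots,x_{d+1}):=\sum_{i=2}^{d+1}||s_i\phi(x_i)-s_1\phi(x_1)||^2-\frac{H(\phi)^2}{c_1}\sum_{i=1}^{d+1}s_i^2||x_i||^2.$$
An inequality of Mumford-Vojta type on the product abelian variety, combined with Faltings' product theorem in the quantitative form used by R\'emond, then asserts that the locus of points where $Q$ is ``too negative'' is contained in a closed subvariety $Z^{\rm exc}\subsetneq V^{d+1}$ of bounded complexity, provided the heights $||x_i||$ are bounded below by $c_3$ and are sufficiently geometric in the sense prescribed by the multipliers.

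The decisive geometric input is condition (\ref{stella}). It implies that for every abelian subvariety $B$ of $E^g$ the sum-varieties $V+\cdots+V$ and their twisted analogues have the expected dimension, so $V^{d+1}$ is not contained in any translate of a proper abelian subvariety of $(E^g)^{d+1}$ which projects non-trivially onto each factor. A standard argument following \cite{RHG2} then shows that no component of $Z^{\rm exc}$ surjects onto $V$ via a coordinate projection, so that
$$V^u:=V\setminus\bigcup_{i=1}^{d+1}\pi_i(Z^{\rm exc})$$
is a non-empty Zariski open subset of $V$ on which the Vojta inequality holds as required by the definition.

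The main obstacle is uniformity in $\phi$. A priori the exceptional set $Z^{\rm exc}$ produced by the product theorem depends on the specific Vojta divisor used and hence on the auxiliary morphism $\phi$, whereas the open set $V^u$ must be a single Zariski open valid for every Gauss-reduced $\phi$ of rank at most $g$. One must therefore show that the defining equations of $Z^{\rm exc}$ are controlled purely by the intrinsic geometry of $V\subset E^g$, the morphism $\phi$ entering only through the homogeneous scaling factor $H(\phi)$ already visible in the statement. This is exactly where condition (\ref{stella}) is indispensable, since it yields bounds on fiber dimensions of $\phi|_V$ that are uniform in $\phi$; extracting this uniformity cleanly from the product theorem is the most delicate technical step of the proof.
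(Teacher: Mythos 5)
First, a point of reference: the paper does not prove this statement at all. It is quoted verbatim as Theorem~\ref{den} from R\'emond's article \cite{Gprep} (``Intersection de sous-groupes et de sous-vari\'et\'es III''), so there is no internal proof to compare your sketch against; what you are really proposing is a reconstruction of R\'emond's argument.

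As such a reconstruction, your outline has the right general shape (Vojta divisor on $V^{d+1}$, product theorem, an exceptional locus to be excised), but it has a genuine gap exactly where the theorem's content lies. In R\'emond's setup the open set is not defined as the complement of projections of the product-theorem exceptional locus: he defines an intrinsic ``bad'' locus $Z_X^{(r)}$ (a union of translates $x+B$ of abelian subvarieties meeting $V$ in excessive dimension), proves the Vojta inequality --- his Proposition 5.1 of \cite{RHG2}, quoted in this paper's Definition of ``satisfies a Vojta inequality'' --- unconditionally on $X\setminus Z_X^{(r)}$, and the \emph{entire new content} of \cite{Gprep} is the geometric statement that under condition (\ref{stella}) this locus is a proper closed subset, so that $V^u=X\setminus Z_X^{(r)}$ is a non-empty open. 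Your sketch dismisses precisely this step as ``a standard argument following \cite{RHG2}'' showing that no component of $Z^{\rm exc}$ surjects onto $V$; it is not standard, and you give no mechanism by which (\ref{stella}) --- a condition on $\dim(V+B)$ for abelian subvarieties $B$ of $E^g$ --- controls the components of an exceptional locus living in $(E^g)^{d+1}$. Likewise, the uniformity in $\phi$ that you correctly flag as the delicate point is flagged but not resolved: you assert that the defining equations of $Z^{\rm exc}$ depend only on the intrinsic geometry of $V$, with $\phi$ entering only through $H(\phi)$, but offer no argument, and without it $V^u$ could a priori shrink to the empty set as $\phi$ ranges over infinitely many Gauss-reduced morphisms. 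Both the identification of the correct $\phi$-independent excluded locus and the proof that condition (\ref{stella}) keeps it proper are missing, and together they constitute the theorem.
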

These two theorems imply:
\begin{thm}
\label{g1}
Conjecture \ref{hb} ii. holds for  $V$  satisfying  
condition (\ref{stella}).
\end{thm}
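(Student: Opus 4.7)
The plan is simply to combine the two results that immediately precede the statement: Theorem \ref{den} produces the candidate open set, and Theorem \ref{Gal1} extracts the boundedness of height from the Vojta inequality. There is no additional content to add beyond checking that the two hypotheses match.

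More precisely, assume $V \subset E^g$ satisfies condition (\ref{stella}). First I would invoke Theorem \ref{den} of R\'emond, which under this very hypothesis produces a non-empty Zariski open subset $V^u \subset V$ on which a Vojta inequality holds in the sense of the definition given just before Theorem \ref{Gal1} (that is, in the sense of normalized projectors, which for us are exactly the Gauss-reduced morphisms).

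Next I would feed this $V^u$ into Theorem \ref{Gal1}, again due to R\'emond: since $V^u$ satisfies a Vojta inequality, there exists $\varepsilon > 0$ such that
\[
S_{d+1}(V^u,\Gamma_\varepsilon)
\]
has bounded height. This is exactly the conclusion of Conjecture \ref{hb} ii.\ for the variety $V$, with the open set $V^u$ produced by Theorem \ref{den} and the $\varepsilon$ produced by Theorem \ref{Gal1}.

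There is essentially no obstacle here: the content is entirely in the two cited theorems of R\'emond, and the only thing to verify is the matching of terminology, namely that the ``normalized projectors'' of \cite{RHG2} in the definition of a Vojta inequality coincide with the Gauss-reduced morphisms used here (a point already noted in the paragraph following the definition). Once this is observed, the deduction is a one-line composition of the two theorems.
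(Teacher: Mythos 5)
Your proof is correct and is exactly the paper's own argument: the paper derives Theorem \ref{g1} by the same one-line composition of Theorem \ref{den} (producing the open subset $V^u$ satisfying a Vojta inequality) with Theorem \ref{Gal1} (giving bounded height of $S_{d+1}(V^u,\Gamma_\varepsilon)$). Nothing further is needed.
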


Here, we extend his theorem to the associated weak-transverse case.
\begin{thm}
\label{wtalt}
 
 Conjecture \ref{hb} i. holds for $V\times p$, where   $V$  satisfies
condition (\ref{stella}) and $p$  is a point in $E^s $  not lying in any proper algebraic
subgroup of $E^s$. 

\end{thm}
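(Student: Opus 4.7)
The plan is to deduce Theorem \ref{wtalt} from R\'emond's Theorem \ref{Gal1} by establishing a Vojta inequality for a non-empty Zariski open subset $\tilde V^u$ of $V \times p$ inside the ambient $E^{g+s}$. I would take $V^u \subset V$ to be the open provided by Theorem \ref{den}, on which a Vojta inequality holds in $E^g$, and set $\tilde V^u = V^u \times p$; the goal is to transfer the Vojta property from $V^u$ to $\tilde V^u$ and then invoke Theorem \ref{Gal1} with $\Gamma = 0$.

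For the transfer, let $\tilde\phi\colon E^{g+s} \to E^r$ be a Gauss-reduced morphism of height $a$ and decompose $\tilde\phi = (\phi_1 \mid \phi_2)$ with $\phi_1\colon E^g \to E^r$ and $\phi_2\colon E^s \to E^r$. Since $\tilde\phi$ is Gauss-reduced, the $aI_r$ submatrix sits in some $r$ of its columns and both $H(\phi_1), H(\phi_2) \le a$. In the principal case, where the $aI_r$ block lies entirely in the first $g$ columns, $\phi_1$ itself is Gauss-reduced of rank $r$ in $E^g$ with $H(\phi_1) = a$; one applies the Vojta inequality of Theorem \ref{den} to $V^u$ with $\phi_1$ to produce exponents $s_1, \dots, s_{d+1}$ satisfying $\sum_{i=2}^{d+1} ||s_i \phi_1(x_i) - s_1 \phi_1(x_1)||^2 \ge (a^2/c_1) \sum_{i=1}^{d+1} s_i^2 ||x_i||^2$. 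Using $\tilde\phi(x_i, p) = \phi_1(x_i) + \phi_2(p)$, the analogous sum for $\tilde\phi$ on the points $(x_i, p)$ differs by the correction $(s_i - s_1)\phi_2(p)$, whose norm is bounded by $||\phi_2(p)|| \le sa\,||p||$ since $H(\phi_2) \le a$. A standard parallelogram-type inequality absorbs this correction into the main term for $||x_i||$ above an explicit threshold depending on $||p||$, giving the Vojta inequality for $\tilde V^u$ with $\tilde\phi$ up to a slightly worse constant.

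For the remaining Gauss-reduced $\tilde\phi$, whose $aI_r$ block has columns in the last $s$ positions, the Vojta inequality need not be verified, because the intersection $(V \times p) \cap (B_{\tilde\phi} + \mathcal{O}_\varepsilon)$ will be empty for $\varepsilon$ below an explicit threshold. Indeed, in these cases either $\phi_2$ contains a nontrivial Gauss-reduced block (forcing $||\phi_2(p)|| \ge c(p)\, H(\phi_2) \min_k ||p_k||$ via the linear independence proposition of Section 2) or $\phi_1$ has rank strictly less than $r$, so that $\phi_1(E^g)$ is a proper subgroup of $E^r$. In either situation the equation $\phi_1(y) + \phi_2(p) = \tilde\phi(\xi)$ with $||\xi|| \le \varepsilon$ forces $\phi_2(p)$ to lie within $(g+s)a\varepsilon$ of a proper algebraic subgroup of $E^r$; pulling back through the linear independence proposition, this would yield a nontrivial relation among the $p_k$ up to error $\varepsilon$, contradicting the rank-$s$ property of $p$ for $\varepsilon < \varepsilon_0(p)$.

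Once the Vojta inequality is available for $\tilde V^u$ in $E^{g+s}$ on all Gauss-reduced morphisms that contribute to $S_{d+1}$, Theorem \ref{Gal1} applied with $\Gamma = 0$ produces $\varepsilon > 0$ such that $S_{d+1}(\tilde V^u, \mathcal{O}_\varepsilon)$ has bounded height, which is Conjecture \ref{hb} i. for $V \times p$. The main obstacle is the case analysis on the position of the $aI_r$ submatrix of $\tilde\phi$: the principal position reduces cleanly to Theorem \ref{den}, but verifying emptiness in the mixed and degenerate positions requires a delicate quantitative balance between the rank-$s$ lower bound on $||\phi_2(p)||$, the upper bound on $||\phi_1(y)||$ coming from the structure of $\phi_1$, and the proximity parameter $\varepsilon$.
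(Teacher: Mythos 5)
Your overall strategy---promote $V^u$ to $V^u\times p$, verify a Vojta inequality in $E^{g+s}$, and feed it to Theorem \ref{Gal1} with $\Gamma=0$---is not the paper's route, and it has a genuine gap at the step where you dismiss the non-principal Gauss-reduced morphisms. The claim that $(V\times p)\cap(B_{\tilde\phi}+\mathcal{O}_\varepsilon)$ is empty for small $\varepsilon$ when the $aI_r$ block meets the last $s$ columns is false: a row of $\tilde\phi$ whose diagonal entry sits over a coordinate of $p$ reads $\ell(x)+ap_k+\ell'(p)=\tilde\phi(\xi)$ with $\ell$ a nonzero integral form in the $x$-variables, and since $x$ ranges over a positive-dimensional transverse $V$ the term $\ell(x)$ sweeps out all of $E$; the condition only says that $\phi_2(p)$ lies near the algebraic subgroup $\phi_1(E^g)+\tilde\phi(\mathcal{O}_\varepsilon)$, which is \emph{not} a $\ze$-linear relation among $p_1,\dots,p_s$ and does not contradict $p$ having rank $s$ (already $\tilde\phi=(1,0,2)$ on $E^2\times E$ gives a non-empty intersection for suitable $p$). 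These morphisms therefore contribute, and your absorption argument does not cover them. Worse, for Gauss-reduced $\tilde\phi$ supported on the last $s$ columns the Vojta inequality for $V^u\times p$ is genuinely false (the left side is $\sum_i|s_i-s_1|^2\,||\psi(p)||^2$, bounded independently of $\max_i||x_i||$), so Theorem \ref{Gal1} cannot be invoked as a black box in $E^{g+s}$ at all.

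The correct reduction to ``block in the first $g$ columns'' is the paper's Lemma \ref{ind}, but it lands you on \emph{Quasi-special} morphisms $\tilde\phi=(\phi|\phi')$ where $H(\phi')$ may greatly exceed $a=H(\phi)$, and there your correction term $|s_i-s_1|\cdot||\phi'(p)||\sim s_1H(\phi')\,||p||$ cannot be absorbed into $\frac{a^2}{c_1}\sum_i s_i^2||x_i||^2$ by any threshold on $||x_i||$ that is uniform in $\tilde\phi$, since $H(\phi')/a$ is unbounded. This is precisely the dichotomy the paper isolates in Lemma \ref{spezzo}: the points with $H(\phi')\le a$ produce translates in the division group $\Gamma_p^g$ and are handled by Theorem \ref{Gal1} applied to $V^u\subset E^g$ with the \emph{nontrivial} group $\Gamma=\Gamma_p^g$ (not $\Gamma=0$), while the points with $H(A)\ge|a|$ form the set $V^u\cap G_{p,\varepsilon,d+1}$, for which one must re-run R\'emond's Vojta argument from \cite{RHG2} with the substitute Lemmas \ref{3.2} and \ref{3.3}. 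That second branch is the essential content of Theorem \ref{sopra} and is entirely missing from your proposal; your ``principal case'' absorption only recovers the easy (Special) half.
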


For $V$ transverse and $p\in E^s$ a point of rank $s$, we can not embed the set
$S_{\indice}(V\times p,\mathcal{O}_\varepsilon)$   in a set of the type
$S_{\indice}(V,\Gamma_{\varepsilon'})$, unless we know a priori that the first
  set has bounded height. So, Theorem \ref{Gal1} is not enough to deduce  a statement for $V\times p$.

However, we can embed 
$S_{\indice}(V\times p,\mathcal{O}_\varepsilon)$ in the union of two sets
$S_{\indice}(V,\Gamma_{\varepsilon'})\cup (V \cap G_{p,\varepsilon,r})$, where the set $G_{p, \varepsilon,r}$ is defined in the proof of Theorem \ref{sopra} below. The same method can be
used to show that, for $V^e$ satisfying a Vojta inequality, $V^e\cap G_{p,\varepsilon,r}$ has bounded height, exactly as we do for curves in \cite{io} Theorem 1.2.

Let us write the details.

 \begin{D}
 Let $r,s $ positive integers and $\varepsilon>0$ a real.
 Let $p$ be a point in $E^s$.
    We define  $G_p^{\varepsilon,r}$ as the set of points  $\g \in E^{r}$ for which there exist a matrix $A\in M_{r, s} (\emor(E))$, an element $a\in \emor(E)$ with  $0<|a|\le H(A)$, points $\xi \in E^s$ and $\upxi \in E^r$ of norm at most $\varepsilon$ such that 
    $$[a]\g={A}({p}+\xi) +[a]\upxi.$$
We identify $G_p^{\varepsilon,r}$ with the subset $G_p^{\varepsilon,r}\times\{0\}^{g-r}$ of $E^g$.  
\end{D}

\begin{lem}
\label{spezzo}
Let $V$ be a subvariety of $E^g$.
Let $V^e$ be a subset of $V $ and let $p\in E^s $ be a point. Then, for every $\varepsilon\ge 0$, the projection  on the first $g$ coordinates 
\begin{equation*}
\begin{split}
E^{g}\times E^s &\to E^g\\
(x,y) &\to x
\end{split}
\end{equation*}
 defines an injection
\begin{equation*}
 S_r(V^e\times {p},\mathcal{O}_{\varepsilon/2gs})\hookrightarrow V^e\cap \bigcup_{\substack{\phi:E^g\to E^r \\{\rm{Gauss-reduced}}}} \left(B_\phi+(\Gamma_p^g)_\varepsilon\right)\cup\left(B_\phi+G_p^{\varepsilon,r}\right).
 \end{equation*}
\end{lem}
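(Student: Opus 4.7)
The plan is to unpack the definition of $S_r$: a point $(x, p)$ in the LHS satisfies $(x, p) - \eta \in B$ for some algebraic subgroup $B \subset E^{g+s}$ of codimension at least $r$ and some $\eta = (\eta_1, \eta_2)$ with $||\eta|| \le \varepsilon/(2gs)$. I would apply Lemma \ref{tor} inside $E^{g+s}$ to a surjective morphism cutting out $B$, obtaining a Gauss-reduced morphism $\tilde\phi : E^{g+s} \to E^r$ with $B \subset B_{\tilde\phi} + T$ for a torsion translate $T$. Writing $\tilde\phi = (\phi|\phi')$ with $\phi : E^g \to E^r$ and $\phi' : E^s \to E^r$, and setting $a = H(\tilde\phi)$, the relation becomes $\phi(x) + \phi'(p) = \phi(\eta_1) + \phi'(\eta_2) + t$ for some torsion $t \in E^r$ of zero norm.

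I would then split the analysis according to the location of the $aI_r$ block of $\tilde\phi$. In Case A (the block sits entirely inside the first $g$ columns), $\phi$ itself is Gauss-reduced on $E^g$ with $H(\phi) = a$ and $H(\phi') \le a$. Since $\phi'(p) \in \Gamma_p^r$ and $\Gamma_p$ is closed under division, I pick $\gamma_0 \in \Gamma_p^r$ with $[a]\gamma_0 = -\phi'(p)$; then $\gamma = (\gamma_0, 0, \ldots, 0) \in \Gamma_p^g$ satisfies $\phi(\gamma) = -\phi'(p)$, so $\phi(x - \gamma) = \phi(\eta_1) + \phi'(\eta_2) + t$. Setting $\xi = \eta_1 + (\theta, 0, \ldots, 0)$ with $[a]\theta = \phi'(\eta_2) + t$ gives $\phi(x - \gamma - \xi) = 0$, and the bound $||\theta|| = ||\phi'(\eta_2)||/a \le s||\eta_2||$ combined with $||\eta_i|| \le \varepsilon/(2gs)$ produces $||\xi|| \le \varepsilon$. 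Hence $x \in B_\phi + (\Gamma_p^g)_\varepsilon$.

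In Case B (the block uses at least one column from $\phi'$), we have $H(\phi') = a$, which is exactly the condition $|a'| \le H(A)$ in the definition of $G_p^{\varepsilon, r}$ applied with $A = -\phi'$ and $a' = a$. I choose $\upxi \in E^r$ with $[a]\upxi = \phi(\eta_1) + \phi'(\eta_2) + t$; then $\phi(x) = A(p) + [a]\upxi$, so $\phi(x) = [a]\theta$ for some $\theta \in G_p^{\varepsilon, r}$ (the defining equation is satisfied with $\xi = 0$). The key norm bound is $||\upxi|| \le (||\phi(\eta_1)|| + ||\phi'(\eta_2)||)/a \le \varepsilon/(2s) + \varepsilon/(2g) = (g+s)\varepsilon/(2gs) \le \varepsilon$, which is precisely why the factor $1/(2gs)$ appears in the statement. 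Consequently $x - (\theta, 0, \ldots, 0) \in B_\phi$, so $x \in B_\phi + G_p^{\varepsilon, r}$ in the stated embedding.

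The main technical obstacle is that $\phi$ in Case B need not itself be Gauss-reduced on $E^g$. When $\phi$ has rank $r$, I would apply Lemma \ref{tor} to $\phi$ to replace it by a Gauss-reduced $\phi_0$ with $B_\phi \subset B_{\phi_0} + T'$, absorbing $T'$ into $G_p^{\varepsilon, r}$ (torsion shifts of a $\theta \in G_p^{\varepsilon, r}$ remain in $G_p^{\varepsilon, r}$ after matching adjustment of $\upxi$, since torsion has zero norm). When $\phi$ is rank-deficient, the rows of $\tilde\phi$ whose pivot lies in $\phi'$ impose additional equations relating components of $p$; for sufficiently small $\varepsilon$ these will either contradict the existence of such a $\tilde\phi$ or allow the argument to be rerun on a morphism of effectively lower target dimension, reducing the situation back to one of the two previous cases.
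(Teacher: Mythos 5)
Your overall architecture is right --- split according to whether the morphism is ``dominated'' by its action on the $E^g$-factor (landing in $B_\phi+(\Gamma_p^g)_\varepsilon$) or by its action on the $p$-coordinates (landing in $B_\phi+G_p^{\varepsilon,r}$) --- and your Case A, together with the norm bookkeeping that explains the factor $1/2gs$, is essentially what the paper does. But Case B contains a genuine gap, and it sits exactly where the real work of the lemma lies. You reduce via Lemma \ref{tor} applied in $E^{g+s}$, which places the block $aI_r$ anywhere among the $g+s$ columns; when that block meets the last $s$ columns, the truncation $\phi:E^g\to E^r$ is in general neither Gauss-reduced nor of rank $r$, yet your conclusion ``$\phi(x)=[a]\theta$, consequently $x-(\theta,0,\dots,0)\in B_\phi$'' silently uses that $\phi$ acts as $[a]$ on $E^r\times\{0\}^{g-r}$, i.e.\ that $\phi=(aI_r|L)$. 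For a $\phi$ whose first $r$ columns are not $aI_r$ this implication is false. Your proposed repairs do not close the hole: replacing $\phi$ by a Gauss-reduced $\phi_0$ with $B_\phi\subset B_{\phi_0}+T'$ relates the two morphisms only through their kernels, not through the identity $\phi(x)=A(p)+[a]\upxi$ you actually possess (and $H(\phi_0)$ need no longer match the $a$ appearing in that identity, which is what the constraint $|a|\le H(A)$ in the definition of $G_p^{\varepsilon,r}$ is about); and the rank-deficient sub-case is dispatched with ``either contradict the existence \dots\ or rerun at lower target dimension'', which is not an argument --- the target of the lemma is a union over Gauss-reduced morphisms onto the \emph{fixed} $E^r$, so dropping the rank is not available, and the statement is claimed for every $\varepsilon\ge0$, so smallness of $\varepsilon$ cannot be invoked there.

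The paper avoids this obstacle by performing the reduction differently: instead of Lemma \ref{tor} on $E^{g+s}$, it invokes Lemma \ref{ind} (the analogue of Lemma 6.2 of \cite{io}), which carries out the Gaussian elimination with pivots confined to the first $g$ columns and replaces the arbitrary algebraic subgroup by the kernel of a \emph{Quasi-special} morphism $\tilde\phi=(\phi|\phi')$; there $\phi=(aI_r|L)$ is Gauss-reduced on $E^g$ with $H(\phi)=a$ in \emph{both} cases, by construction. The dichotomy is then on $H(\phi')\le a$ versus $H(\phi')>a$ --- not on the location of the identity block --- and in the second case the relation $\phi(x)=-\phi'(p)-\tilde\phi(\xi)$ can be divided by $a$ inside $E^r\times\{0\}^{g-r}$ precisely because $\phi$ restricted there is multiplication by $a$; this is what legitimately places $x$ in $B_\phi+G_p^{\varepsilon,r}$ with $A=-\phi'$ and $|a|\le H(A)$. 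To repair your proof you should route the argument through Lemma \ref{ind} (or reprove its content, which is where the structure of the point $p$ and the elimination order actually matter) rather than through Lemma \ref{tor}.
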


\begin{proof}
The proof is the analog of the proof of \cite{io} Lemma 7.2, where we shall replace $C(\overline{\qe})$ by $V^e$, the codimension $2$ by $r$ (as well as $E^2$ and  $g-2$ by $E^r$ and $g-r$),  the set $G^\varepsilon_p$ by $G^{\varepsilon,r}_p$. Also we shall use Lemma \ref{ind} stated in this article, instead of \cite{io} Lemma 6.2 to which we refer there.
\end{proof}
  
  \begin{lem} [Counterpart to \cite{RHG2} Lemma 6.1]
  \label{3.2}
   For $\phi:E^g \to E^r$ Gauss-reduced of rank $r$, we have the following inclusion of sets 
\begin{equation*}
(B_\phi+G_p^{\varepsilon,r})  {\subset}
\{ P+ \g \,\,:\,\,P\in B_\phi,\,\,\g\in G_p^{\varepsilon,r}  \,\,\,\,{\rm{and}}\,\,\,\max(||\g||,||P||)\le 2g||P+\g||\}.
\end{equation*}
 
 \end{lem}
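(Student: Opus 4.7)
The plan is to show that the statement holds with the \emph{same} decomposition $x = P_0 + \g_0$ that witnesses $x \in B_\phi + G_p^{\varepsilon,r}$; that is, no modification is needed, and the lemma is an automatic consequence of the Gauss-reduced shape of $\phi$. Write $\phi = (aI_r\mid L)$ with $a = H(\phi)$ and $|L_{ij}| \le a$, set $x = (x_1, \dots, x_g)$, $\bar x = (x_{r+1}, \dots, x_g)$, and take any decomposition $x = P_0 + \g_0$ with $P_0 \in B_\phi$ and $\g_0 = (g_1, \dots, g_r, 0, \dots, 0) \in G_p^{\varepsilon, r} \times \{0\}^{g-r}$.

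Since the last $g-r$ coordinates of $\g_0$ vanish, $P_0$ has the same last $g-r$ coordinates as $x$, namely $\bar x$. The condition $\phi(P_0) = 0$ is then the elliptic system
$$[a](P_0)_i \;=\; -L_i(\bar x), \qquad [a] g_i \;=\; [a]x_i + L_i(\bar x) \qquad (i = 1, \dots, r).$$
The estimates are now immediate from $|L_{ij}| \le a$ and the elementary triangle inequality for the N\'eron-Tate semi-norm $\|\cdot\|$: for each $i \le r$,
$$\|L_i(\bar x)\| \;\le\; \sum_{j} |L_{ij}|\,\|x_{r+j}\| \;\le\; (g-r)\, a\, \|x\|.$$
Since the semi-norm vanishes on torsion, the preimages of $[a]$ acting on a point all have the same norm, so $\|(P_0)_i\| = \tfrac{1}{a}\|L_i(\bar x)\| \le (g-r)\|x\|$ for $i \le r$, while $(P_0)_i = x_i$ for $i > r$; together this gives $\|P_0\| \le g\,\|x\|$. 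The same argument applied to $[a]g_i = [a]x_i + L_i(\bar x)$ yields $\|g_i\| \le (g-r+1)\|x\|$, hence $\|\g_0\| \le g\,\|x\|$.

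Combining these, $\max(\|P_0\|, \|\g_0\|) \le g\|x\| = g\,\|P_0 + \g_0\| \le 2g\,\|P_0 + \g_0\|$, which is the claimed inequality. I do not expect a real obstacle here: the lemma is a bookkeeping exercise entirely powered by the explicit form of a Gauss-reduced morphism, the fact that $\g_0$ is supported in the first $r$ slots (this is why the paper identifies $G_p^{\varepsilon, r}$ with $G_p^{\varepsilon, r} \times \{0\}^{g-r}$), and the sharp entry bound $|L_{ij}| \le a$. The factor $2g$ in the statement is a convenient margin absorbing the tighter bound $g$ that the argument actually produces.
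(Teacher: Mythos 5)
Your proof is correct and follows essentially the same route as the paper's (which simply defers to the coordinate computation of \cite{io} Lemma 7.3): the Gauss-reduced shape $\phi=(aI_r|L)$ with $H(\phi)=a$ forces $\overline{P_0}=\overline{x}$ and hence $[a](P_0)_i=-L_i(\overline{x})$, and since the N\'eron--Tate semi-norm scales exactly under $[a]$ this pins down $\|P_0\|$ and $\|\g_0\|$ by $g\|x\|$ for \emph{any} admissible decomposition. The constant $2g$ in the statement is indeed just slack over the bound $g$ that the computation yields.
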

 \begin{proof}
The proof is the analog of \cite{io} Lemma 7.3, where one replaces $G^\varepsilon_p$ by $G_p^{\varepsilon,r}$  and $2$ by $r$.

\end{proof}

Note that, \cite{RHG2} Lemma 6.2 part (1) is a statement on the morphisms, therefore it holds with no need of any remarks.

\begin{lem}[Counterpart to \cite{RHG2} Lemma 6.2 part (2)]
\label{3.3} Let $c_1$ be a given constant. Let $p\in E^s$ be a point of rank $s$.
There exists $\euno>0$ such that  if $\varepsilon \le \euno$ then any sequence of elements in $G_p^{\varepsilon,r}$ admits  a sub-sequence in which every two elements $\g$, $\g'$ satisfy 
$$\left|\left|\frac{\g}{||\g||}-\frac{\g'}{||\g'||}\right|\right|\le \frac{1}{16 g c_1}.$$
\end{lem}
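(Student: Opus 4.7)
The strategy is compactness: I will show that each $\g\in G_p^{\varepsilon,r}$ is, up to a controllable error, a scalar multiple of the specific vector $Ap$, so its direction $\g/||\g||$ lies close to the unit sphere of the \emph{finite-dimensional} real subspace generated by the vectors $e_i\otimes p_j$, which is compact. Bolzano--Weierstrass, together with a sign-selection, then yields the sub-sequence.

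\textbf{Step 1 (non-vanishing and a uniform lower bound).} Fix $\g\in G_p^{\varepsilon,r}$ with data $(A,a,\xi,\upxi)$ and choose a row $i^{*}$ of $A$ achieving $\max_j|A_{i^{*}j}|=H(A)\ge|a|$. The $i^{*}$-th coordinate of the defining relation reads $a(\g_{i^{*}}-\upxi_{i^{*}})=\sum_j A_{i^{*}j}(p_j+\xi_j)$. Applying the Proposition of Section~2.4 with $b_j=A_{i^{*}j}$, $b=0$ and perturbations $-\xi_j$ (legitimate as soon as $\varepsilon\le\varepsilon_0(p)$) gives
$$\sqrt{c_0(p)}\,H(A)\min_j||p_j||\le|a|\cdot||\g_{i^{*}}-\upxi_{i^{*}}||\le H(A)(||\g||+\varepsilon).$$
Writing $C:=\sqrt{c_0(p)}\min_j||p_j||>0$, one gets $||\g||\ge C-\varepsilon$, so $\g$ is non-torsion for $\varepsilon\le C/2$; the same inequality with $\xi=\upxi=0$ (also admissible) yields the global estimate $||Ap||\ge H(A)\,C$.

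\textbf{Step 2 (approximate direction in a finite-dimensional sphere).} Work in the real vector space $V$ attached to $E^g$, where the semi-norm becomes a norm after quotienting by torsion and tensoring with $\mathbb{R}$. The defining relation gives
$$||a\g-Ap||\le||A\xi||+|a|\cdot||\upxi||\le(s+1)H(A)\varepsilon.$$
Combined with $||Ap||\ge H(A)\,C$ and the elementary normalisation inequality $||u/||u||-v/||v||\,||\le 2||u-v||/||v||$, applied to $u=a\g$ and $v=Ap$, this yields
\begin{equation*}
\Bigl|\Bigl|\frac{\g}{||\g||}-\mathrm{sign}(a)\,\frac{Ap}{||Ap||}\Bigr|\Bigr|\le\frac{2(s+1)\varepsilon}{C}.\qquad(\star)
\end{equation*}
The unit vector $Ap/||Ap||$ lives in the unit sphere $S_p$ of the finite-dimensional subspace $V_p\subset V$ generated by the $rs$ vectors $e_i\otimes p_j$; in particular $S_p$ is compact.

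\textbf{Step 3 (compactness and selection).} Given a sequence $(\g_n)$ in $G_p^{\varepsilon,r}$ with witnesses $(A_n,a_n,\xi_n,\upxi_n)$, first pass to an infinite sub-sequence along which $\mathrm{sign}(a_n)$ is constant. The points $A_np/||A_np||\in S_p$ then admit, by Bolzano--Weierstrass, a convergent sub-sub-sequence indexed by $n_k$, say to $u_\infty\in S_p$. Restricting to indices $k\ge N$ with $||A_{n_k}p/||A_{n_k}p||-u_\infty||\le\delta$, the triangle inequality together with $(\star)$ gives, for all $k,k'\ge N$,
$$\Bigl|\Bigl|\frac{\g_{n_k}}{||\g_{n_k}||}-\frac{\g_{n_{k'}}}{||\g_{n_{k'}}||}\Bigr|\Bigr|\le 2\delta+\frac{4(s+1)\varepsilon}{C}.$$
Taking $\delta=1/(64\,g\,c_1)$ and
$$\euno:=\min\Bigl(\varepsilon_0(p),\,\frac{C}{2},\,\frac{C}{128\,g\,c_1\,(s+1)}\Bigr)$$
makes the right-hand side at most $1/(16\,g\,c_1)$, as required.

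\textbf{Main obstacle.} The delicate point is the uniformity of $(\star)$ in the unbounded parameters $(A,a)$: the error term $(s+1)H(A)\varepsilon$ grows with $H(A)$, and only the matching lower bound $||Ap||\ge H(A)\,C$ on the dominant summand lets it collapse to a \emph{relative} error $2(s+1)\varepsilon/C$ independent of $A$ and $a$. This uniform lower bound is where the rank-$s$ hypothesis on $p$ is essential, via the Proposition from \cite{io}; without it no single $\euno$ would suffice, and the compactness/sign-selection part would be useless.
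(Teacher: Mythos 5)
Your argument is correct and follows essentially the same route as the paper's proof, which simply defers to \cite{io} Lemma 7.4: a uniform lower bound $||Ap||\ge H(A)\sqrt{c_0(p)}\min_j||p_j||$ coming from the quasi-orthogonality Proposition of Section 2.4, so that each direction $\g/||\g||$ lies within a relative $O(\varepsilon)$ error of the compact unit sphere of the span of the $e_i\otimes p_j$, followed by extraction of a convergent subsequence. The only adjustment worth noting is that Section 6 allows $E$ with C.M., in which case $\mathrm{sign}(a)$ should be replaced by $a/|a|$ ranging over the unit circle, which is still compact, so the selection step goes through unchanged.
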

\begin{proof}
The proof is the analog of \cite{io} Lemma 7.4 where $A, A' \in M_{r,s}(\emor(E))$ and $A=\left(\begin{array}{c}A_1\\\vdots\\A_r\end{array}\right),$ with $A_i\in M_{1,s}(\emor(E))$. \end{proof}

We are ready to conclude.

\begin{thm}
\label{sopra}
Let $p\in E^s$ be a point of rank $s$.
Suppose that   $V^e\subset V$  satisfies a Vojta inequality.  Then, there exists $\varepsilon>0$ such that $$S_{d+1}(V^e\times p, \mathcal{O}_\varepsilon)$$ has bounded height.
\end{thm}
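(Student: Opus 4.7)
My plan is to combine the splitting furnished by Lemma \ref{spezzo} with Theorem \ref{Gal1} and an \emph{ad hoc} Vojta argument for the extra piece $G_p^{\varepsilon,d+1}$, mirroring the curve case of \cite{io} Theorem 1.2. First I would apply Lemma \ref{spezzo} with $r=d+1$: projection on the first $g$ coordinates sends $S_{d+1}(V^e\times p,\mathcal{O}_{\varepsilon/2g(d+1)})$ injectively into
$$V^e\cap\bigcup_{\substack{\phi:E^g\to E^{d+1}\\ \text{Gauss-reduced}}} \Bigl[\bigl(B_\phi+(\Gamma_p^g)_\varepsilon\bigr)\cup\bigl(B_\phi+G_p^{\varepsilon,d+1}\bigr)\Bigr].$$
Because the projection changes heights only by the bounded quantity $||p||$, it suffices to bound the height on each of the two unions separately.

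For the first union, observe that $V^e\cap\bigcup_\phi(B_\phi+(\Gamma_p^g)_\varepsilon)=S_{d+1}(V^e,(\Gamma_p^g)_\varepsilon)$. Since $p$ has rank $s$, the division group $\Gamma_p$ has finite rank $s$, and so $\Gamma_p^g$ has finite rank $gs$. Applying Theorem \ref{Gal1} with $\Gamma_p^g$ in the role of $\Gamma$ yields an $\varepsilon>0$ for which this set has bounded height.

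For the second union I would argue by contradiction. Suppose there is an infinite sequence $(x_i)\subset V^e$ with $||x_i||\to\infty$ and $x_i\in B_{\phi_i}+G_p^{\varepsilon,d+1}$ for some Gauss-reduced $\phi_i$ of rank $d+1$. By Lemma \ref{3.2} I decompose $x_i=P_i+\g_i$ with $P_i\in B_{\phi_i}$, $\g_i\in G_p^{\varepsilon,d+1}$, and $\max(||P_i||,||\g_i||)\le 2g||x_i||$. Choosing $\varepsilon\le\euno$ (where $\euno$ is as in Lemma \ref{3.3} applied with the Vojta constant $c_1$ of $V^e$), I pass to a subsequence in which the unit vectors $\g_i/||\g_i||$ are pairwise within $1/(16gc_1)$, and then to a further subsequence along which $||x_i||$ grows geometrically with ratio larger than $c_2^{-1}$. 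This allows me to pick weights $s_i\in\mathbb{N}^*$ with $s_i\ge c_2 s_{i+1}$ such that the products $s_i\g_i$ have comparable norms and are essentially collinear.

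Finally I would feed $d+1$ such points and the morphism $\phi_1$ into the Vojta inequality satisfied by $V^e$. Because $\phi_1(P_1)=0$, we have $\phi_1(x_1)=\phi_1(\g_1)$, and the almost-collinearity forces $s_i\phi_1(\g_i)$ to be close to $s_1\phi_1(\g_1)$ for every $i$; the genuinely dangerous terms $\phi_1(P_i)$ with $i\ge 2$ are handled exactly as in \cite{io} via Lemma \ref{3.2}, which bounds each of $||P_i||,||\g_i||$ in terms of $||x_i||$ and hence controls $||\phi_1(P_i)||\le gH(\phi_1)||P_i||$. The right-hand side $\frac{H(\phi_1)^2}{c_1}\sum s_i^2||x_i||^2$ grows like $H(\phi_1)^2 s_{d+1}^2 ||x_{d+1}||^2$, whereas the almost-parallelism collapses the left-hand side to a quantity of lower order in $||x_{d+1}||$; letting $||x_{d+1}||\to\infty$ delivers the contradiction. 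The hard step is precisely this coordination of varying morphisms $\phi_i$ with a single Vojta input $\phi_1$: it is the analogue for varieties of the curve-case trick in \cite{io} Theorem 1.2, and it is what the whole $G_p^{\varepsilon,r}$ machinery, together with Lemmas \ref{3.2} and \ref{3.3}, has been set up to accomplish.
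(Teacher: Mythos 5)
Your proposal follows the paper's proof essentially verbatim: the same splitting via Lemma \ref{spezzo} into the piece $S_{d+1}(V^e,(\Gamma_p^g)_\varepsilon)$ (handled by Theorem \ref{Gal1} with $\Gamma=\Gamma_p^g$) and the piece $V^e\cap G_{p,\varepsilon,d+1}$ (handled by the R\'emond-style Vojta argument using Lemmas \ref{3.2} and \ref{3.3}), which is exactly what the paper does --- it merely cites R\'emond \cite{RHG2}, Theorem 1.2, pp.\ 341--343, for the second piece instead of sketching it. One caution on the sketch: the Vojta inequality as defined here asserts the \emph{existence} of weights $s_i$ making the left-hand side large, so the contradiction must establish that the left-hand side is small for \emph{every} admissible choice of $s_i$ (using the geometric growth of the $||x_i||$ along the subsequence), rather than for weights you select yourself; with that quantifier straightened out, your outline matches the intended argument.
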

\begin{proof}
Define $$\Gamma_{\varepsilon,r} = \bigcup_{\substack{\phi:E^g\to E^r \\{\rm{Gauss-reduced}}}} \left(B_\phi+(\Gamma_p^g)_\varepsilon\right)$$ and 
$$G_{p,\varepsilon,r}=\bigcup_{\substack{\phi:E^g\to E^r \\{\rm{Gauss-reduced}}}} \left(B_\phi+G_p^{\varepsilon,r}\right).$$
In view of  Lemma \ref{spezzo},  
$S_{d+1}(V^e\times p, \mathcal{O}_\varepsilon) \hookrightarrow \left(V^e\cap \Gamma_{\varepsilon,d+1}\right)\cup   \left(V^e\cap G_{p,\varepsilon,d+1}\right)$.

 Theorem \ref{Gal1}  shows that there exists $\varepsilon_1>0$ such that for $\varepsilon \le \varepsilon_1$, $V^e\cap \Gamma_{\varepsilon,d+1}=S_d(V^e, \Gamma_\varepsilon)$ has bounded height.
 
It remains to show, that there exists $\varepsilon_2>0$ such that  for $\varepsilon\le \varepsilon_2$, the set 
$V^e\cap G_{p,\varepsilon,d+1}$ has bounded height.
The proof follows, step by step, the proof of R\'emond 
 \cite{RHG2}  Theorem 1.2 page 341-343 where one shall read $G_{p,\varepsilon,r}$ for $\Gamma_{\varepsilon,r}$, 
$\g$  for $\gamma$, 
 $V^e$ for 
 $X(\overline{\qe})\setminus Z^{(r)}_X$.  Note that  he writes $|\cdot|$ for the height norm, here  we  write $||\cdot||$. For the morphisms he uses a norm denoted by $||\cdot||$, here we denote the norm of a morphism by $H(\cdot)$.
 \cite{RHG2} Lemmas 6.1 and 6.2 are replaced by  our  Lemmas \ref{3.2} and \ref{3.3}.
 The Vojta Inequality  \cite{RHG2} Proposition 5.1 holds for  the set $V^e$ by assumption. 
  \end{proof}

\begin{proof}[{\bf Proof of Theorem \ref{wtalt}}]
Thanks to Theorem \ref{den}  there exists a non-empty open subset $V^u$ of $V$ such that $V^u$ satisfies a Vojta inequality.  Theorem \ref{sopra} applied with $V^e=V^u$ implies that  there exists $\varepsilon>0$ such that $S_{d+1}(V^u\times p, \mathcal{O}_\varepsilon)$ has bounded height.
 \end{proof}

In conclusion Conjecture \ref{hb} i. and ii. are not equivalent, but the
same method can be applied to prove both cases.

\vskip1cm



\end{document}